\ifpdf\usepackage[pdftex]{hyperref}
\else\usepackage[hypertex]{hyperref}\fi
\theoremstyle{plain}
\newtheorem{thm}{Theorem}[section]
\newtheorem{prop}[thm]{Proposition}
\newtheorem{cor}[thm]{Corollary}
\theoremstyle{definition}
\newtheorem{defn}[thm]{Definition}
\theoremstyle{remark}
\newtheorem{rem}[thm]{Remark}
\theoremstyle{remark}
\DeclareMathOperator{\tr}{Tr}
\DeclareMathOperator{\im}{Im}
\let\dsp=\displaystyle 
\def\R{\mathbb R}
\def\C{\mathbb C}
\def\N{\mathbb N}
\begin{document}

\title[Spectral density and Sobolev inequalities]{Spectral density and
  Sobolev inequalities\\ for pure and mixed states}

\author{Michel Rumin}
\address{Laboratoire de Mathématiques d'Orsay\\
  CNRS et Université Paris Sud\\
  91405 Orsay Cedex\\ France}

\email{michel.rumin@math.u-psud.fr}

\date{\today}

\begin{abstract}
  We prove some general Sobolev--type and related inequalities for
  positive operators $A$ of given ultracontractive spectral decay
  $F(\lambda)= \|\chi_A(]0, \lambda])\|_{1,\infty}$, without assuming
  $e^{-tA}$ is submarkovian. These inequalities hold on functions, or
  pure states, as usual, but also on mixed states, or density
  operators in the quantum-mechanical sense. This provides universal
  bounds of Faber--Krahn type on domains $\Omega$, that apply to the
  whole Dirichlet spectrum distribution of $\Omega$, not only the
  first eigenvalue. Another application is given to relate the
  Novikov--Shubin numbers of coverings of finite simplical complexes
  to the vanishing of the torsion of the $\ell^{p,2}$-cohomology for
  some $p\geq 2$.
\end{abstract}

\keywords{Sobolev inequality, spectral distribution, quantum mechanics,
  Faber--Krahn inequality, Novikov--Shubin invariants,
  $\ell^{p,q}$-cohomology}

\subjclass[2000]{58J50, 46E35, 35P20, 58J35, 46E30.}

\thanks{Author supported in part by the French ANR-06-BLAN60154-01
  grant.}

\maketitle


\section{Introduction and main results}
\label{sec:introduction}

Let $A$ be a strictly positive self-adjoint operator on a measure
space $(X, \mu)$. Suppose moreover that the semigroup $e^{-tA}$ is
equicontinuous on $L^1(X)$ (submarkovian for instance). Then,
according to Varopoulos \cite{Varopoulos,Coulhon2}, a polynomial heat
decay
\begin{displaymath}
  \|e^{-tA}\|_{1,\infty} \leq C t^{-\alpha/2} \quad \mathrm{with} \quad
  \alpha > 2 \,,
\end{displaymath}
is equivalent to the Sobolev inequality
\begin{equation}
  \label{eq:1}
  \|f \|_p \leq C'
  \|A^{1/2}f\|_2 \quad \mathrm{for} \quad
  1/p = 1/2 - 1/\alpha.  
\end{equation}
This result applies in particular in the case $A$ is the Laplacian
acting \emph{on scalar functions} of a complete manifold, either in
the smooth or discrete graph setting.

\subsection{General Sobolev--Orlicz inequalities}
\label{sec:gener-sobol-orlicz}

The first purpose of this paper is to present short proofs of general
Sobolev--Orlicz inequalities that hold for positive self-adjoint
operators, without equicontinuity or polynomial decay assumption,
knowing either their heat decay, as above, or the ``ultracontractive
spectral decay'' $F(\lambda) = \|\Pi_\lambda\|_{1,\infty}$ of their
spectral projectors $\Pi_\lambda = \chi_A(]0, \lambda])$ on
$E_\lambda$. As will be seen, the interest for this former
$F(\lambda)$ mostly comes from geometric considerations. For instance
if $A$ is a scalar invariant operator over an unimodular group
$\Gamma$, then $F(\lambda) $ coincides with von Neumann's
$\Gamma$-dimension of $E_\lambda$, and thus $F$ represents the
non-zero spectral density function of $A$, see
Proposition~\ref{prop:density}. In the general setting the spectral
decay $F$ stays a right continuous increasing function as comes from
the identity
\begin{equation}
  \label{eq:2}
  \|P^*P\|_{1, \infty} = \|P\|_{1, 2}^2 = \sup_{\|f\|_1,
    \|g\|_1 \leq 1}| \langle Pf, Pg \rangle |.
\end{equation}
We first state the Sobolev--Orlicz inequality we shall prove on a
single function, or ``pure state'', as usual. In the sequel, if
$\varphi$ is a monotonic function, $\varphi^{-1}$ will denote its
right continuous inverse.

\begin{thm}
  \label{thm:H-Sobolev}
  Let $A$ be a positive self-adjoint operator on $(X, \mu)$ with
  ultracontractive spectral projections $\Pi_\lambda =
  \chi_A(]0,\lambda])$, i.e. $F(\lambda) = \|\Pi_\lambda\|_{1,\infty}
  < +\infty$.

  Suppose moreover that the Stieljes integral $\dsp G(\lambda) =
  \int_0^\lambda \frac{dF(u)}{u}$ converges.  Then any non zero $f \in
  L^2(X) \cap (\ker A)^\bot$ of energy $\mathcal{E}(f) = \langle Af,
  f\rangle_2$ satisfies
  \begin{equation}
    \label{eq:3}
    \int_X H \Bigl( \frac{|f(x)|^2}{4 \mathcal{E}(f)} \Bigr) d \mu \leq 1 \,, 
  \end{equation}
  where $H(y) = y \,G^{-1}(y)$.
\end{thm}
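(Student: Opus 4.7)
The plan is to split $f$ spectrally as $f=\Pi_\lambda f+(I-\Pi_\lambda)f$, bound each piece using the data $F$, and then optimise by letting $\lambda$ depend on $x$. The first ingredient is a low-frequency $L^2\to L^\infty$ estimate for $T=\Pi_\lambda A^{-1/2}$. Applying \eqref{eq:2} to $T$ gives $\|T\|_{1,2}^2=\|T^*T\|_{1,\infty}=\|A^{-1}\Pi_\lambda\|_{1,\infty}$. To control the latter I write $A^{-1}\Pi_\lambda=\int_0^\lambda u^{-1}\,d\Pi_u$ and integrate by parts in the spectral variable to obtain
\[
 A^{-1}\Pi_\lambda=\frac{\Pi_\lambda}{\lambda}+\int_0^\lambda\frac{\Pi_u}{u^{2}}\,du,
\]
the boundary contribution at $0$ vanishing thanks to the convergence of $G$. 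Subadditivity of $\|\cdot\|_{1,\infty}$, together with the same integration by parts applied to the scalar $G$, then yields
\[
 \|A^{-1}\Pi_\lambda\|_{1,\infty}\leq\frac{F(\lambda)}{\lambda}+\int_0^\lambda\frac{F(u)}{u^{2}}\,du=G(\lambda),
\]
whence, by duality, $\|\Pi_\lambda A^{-1/2}\|_{2,\infty}\leq G(\lambda)^{1/2}$. Writing $f=A^{-1/2}(A^{1/2}f)$ (legitimate since $f\perp\ker A$) this delivers the pointwise low-frequency bound $|\Pi_\lambda f(x)|\leq G(\lambda)^{1/2}\,\mathcal{E}(f)^{1/2}$.

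For the high-frequency part, the spectral theorem gives $\|(I-\Pi_\lambda)f\|_2^2\leq\mathcal{E}(f)/\lambda$, and Fubini furnishes the identity $\int_0^\infty\|(I-\Pi_\lambda)f\|_2^2\,d\lambda=\mathcal{E}(f)$. I then choose, for each $x$,
\[
 \lambda(x)=G^{-1}\!\Bigl(\tfrac{|f(x)|^2}{4\mathcal{E}(f)}\Bigr),
\]
so that $G(\lambda(x))^{1/2}\mathcal{E}(f)^{1/2}=|f(x)|/2$ exactly. On the super-level set $\{x:\lambda(x)>\lambda\}$ the low-frequency bound forces $|\Pi_\lambda f(x)|\leq|f(x)|/2$, hence $|(I-\Pi_\lambda)f(x)|\geq|f(x)|/2$, and integrating $|f|^2$ over this set yields, for every $\lambda>0$,
\[
 \int_{\{\lambda(x)>\lambda\}}|f(x)|^2\,d\mu\leq 4\,\|(I-\Pi_\lambda)f\|_2^2.
\]

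Integrating this inequality in $\lambda\in(0,\infty)$ and using Fubini on the left-hand side produces the master estimate $\int_X|f(x)|^2\lambda(x)\,d\mu\leq 4\mathcal{E}(f)$. Setting $h(x)=|f(x)|^2/(4\mathcal{E}(f))$, the left-hand integrand equals $4\mathcal{E}(f)\,h(x)\,G^{-1}(h(x))=4\mathcal{E}(f)\,H(h(x))$, so the bound is precisely \eqref{eq:3}.

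The main technical obstacle, in my view, is the operator-norm estimate $\|A^{-1}\Pi_\lambda\|_{1,\infty}\leq G(\lambda)$: it is this step that converts the raw ultracontractive data $F$ into a usable pointwise Sobolev-type bound on $\Pi_\lambda f$, and it rests on subadditivity of $\|\cdot\|_{1,\infty}$ applied to an integration by parts in the spectral variable. The remainder of the argument is then the pointwise optimisation $\lambda=\lambda(x)$ followed by one application of Fubini.
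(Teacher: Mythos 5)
Your proof is correct and follows essentially the same route as the paper's: the same spectral integration by parts giving $\|A^{-1}\Pi_\lambda\|_{1,\infty}\leq G(\lambda)$ (Proposition~\ref{prop:2.1}, where the boundary term at $0$ is handled by an explicit $\varepsilon\to 0$ limit via $F(\varepsilon)/\varepsilon\leq G(\varepsilon)\to 0$ and Beppo--Levi), the same level-set comparison $|f(x)|\leq 2|\Pi_{>\lambda}f(x)|$ on $\{|f(x)|^2\geq 4G(\lambda)\mathcal{E}(f)\}$, and the same double integration in $x$ and $\lambda$ closed by $\int_0^{\infty}\|\Pi_{>\lambda}f\|_2^2\,d\lambda=\mathcal{E}(f)$. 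Your pointwise optimisation $\lambda=\lambda(x)$ is just the Fubini reformulation of the paper's sweep over all $\lambda$ (note only that $G(G^{-1}(y))=y$ need not hold exactly for the right-continuous inverse; what your argument actually uses, namely $G(\lambda)\leq h(x)$ for $\lambda<\lambda(x)$, is what holds).
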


The heat version of this result has a similar statement (and proof).
\begin{thm}
  \label{thm:N-Sobolev}
  Let $A$ be a positive self-adjoint operator on $(X, \mu)$ such that
  $L(t) = \|e^{-tA} \Pi_V\|_{1,\infty}$ is finite, with $V= L^2(X)\cap
  (\ker A)^\bot$.
  
  Suppose moreover that $\dsp M(t) = \int_t^{+\infty} L(u) du < +
  \infty $. Then any non zero $f \in V$ of finite energy satisfies
  \begin{equation}
    \label{eq:4}
    \int_X N \Bigl( \frac{|f(x)|^2}{4 \mathcal{E}(f)} \Bigr) d \mu
    \leq \ln 2 \,,
  \end{equation}
  where $N(y) = y / M^{-1}(y)$
\end{thm}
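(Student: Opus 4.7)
The proof parallels Theorem~\ref{thm:H-Sobolev}: the heat semigroup $e^{-tA}$ and the integrated decay $M$ play the roles of the spectral projector $\Pi_\lambda$ and of $G$. The central input is the pointwise bound
$$
|e^{-tA}f(x)|^{2}\le M(t)\,\mathcal{E}(f).
$$
To prove it, for $f\in V$ one uses the self-adjointness of $A^{1/2}$ to rewrite
$$
e^{-tA}f(x)=\bigl\langle A^{-1/2}e^{-tA/2}\Pi_V\delta_x,\;A^{1/2}e^{-tA/2}f\bigr\rangle,
$$
and applies Cauchy--Schwarz. The right-hand factor contributes $\|A^{1/2}e^{-tA/2}f\|_2^{2}=\langle Ae^{-tA}f,f\rangle\le\mathcal{E}(f)$. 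For the left-hand factor, the operator identity $A^{-1}e^{-tA}\Pi_V=\int_{t}^{\infty}e^{-sA}\Pi_V\,ds$ on $V$, combined with the kernel bound $(e^{-sA}\Pi_V)(x,x)\le\|e^{-sA}\Pi_V\|_{1,\infty}=L(s)$, yields
$$
\|A^{-1/2}e^{-tA/2}\Pi_V\delta_x\|_2^{2}=(A^{-1}e^{-tA}\Pi_V)(x,x)\le\int_{t}^{\infty}L(s)\,ds=M(t).
$$

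From this pointwise bound I would convert to the Orlicz inequality as follows. For each $x$, define $t(x)=M^{-1}(|f(x)|^{2}/(4\mathcal{E}(f)))$, so that $M(t(x))\mathcal{E}(f)=|f(x)|^{2}/4$; the pointwise bound then gives $|e^{-t(x)A}f(x)|\le|f(x)|/2$, and hence $|f(x)-e^{-t(x)A}f(x)|^{2}\ge|f(x)|^{2}/4$. Dividing by $t(x)$ and integrating over $X$, the left side becomes $\mathcal{E}(f)\int_X N(|f|^{2}/(4\mathcal{E}(f)))\,d\mu$, since $N(y)=y/M^{-1}(y)$. It remains to show
$$
\int_{X}\frac{|f(x)-e^{-t(x)A}f(x)|^{2}}{t(x)}\,d\mu\le\mathcal{E}(f)\,\ln 2.
$$
For constant $t$ one has the Nash-type bound $\|f-e^{-tA}f\|_{2}^{2}\le t\,\mathcal{E}(f)$ (from $(1-e^{-t\lambda})^{2}\le t\lambda$); the hope is that a Fubini rearrangement, exchanging the $x$-integration for an integration in $t$ weighted by $1/t$, yields the refined version above.

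The main obstacle is exactly this last step: producing the sharp constant $\ln 2$ out of the interplay between the pointwise bound and the $L^2$-energy estimate. A crude weak-type reduction (which would give $\mu\{|f|^{2}>4\mathcal{E}(f)y\}\le 1/N(y)$ followed by layer-cake integration) leads to the divergent $\int N'/N$, so a finer tool is needed. Two plausible routes: integration by parts using the stronger inequality $y\,\mu\{|f|^{2}>4\mathcal{E}(f)y\}+\int_{y}^{\infty}\mu\{|f|^{2}>4\mathcal{E}(f)z\}\,dz\le\tfrac{1}{2}M^{-1}(y)$, which naturally emerges from the level-set argument; or a dyadic decomposition at levels $|f|^{2}\asymp 2^{k}\mathcal{E}(f)$, where a doubling ratio can be expected to produce the $\ln 2$. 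Either way the analytic bookkeeping, once the right framework is selected, should be routine.
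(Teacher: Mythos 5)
Your first half is sound and is essentially the paper's Proposition~\ref{prop:2.1}: the identity $A^{-1}e^{-tA}\Pi_V=\int_t^{+\infty}e^{-sA}\Pi_V\,ds$ gives $\|A^{-1}e^{-tA}\Pi_V\|_{1,\infty}\le M(t)$, and factoring through $A^{1/2}$ yields the pointwise estimate $\|e^{-tA/2}f\|_\infty^2\le M(t)\,\mathcal{E}(f)$. (The paper phrases this with $\|P^*P\|_{1,\infty}=\|P\|_{1,2}^2$ rather than with $\delta_x$, and bounds $e^{-tA/2}f$ rather than $e^{-tA}f$; these are cosmetic differences.)

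The gap is exactly where you locate it, and neither of your two proposed routes is the one that works. The fix is \emph{not} to evaluate the semigroup at the $x$-dependent time $t(x)=M^{-1}\bigl(|f(x)|^2/(4\mathcal{E}(f))\bigr)$ and then wrestle with $\int_X |f-e^{-t(x)A}f|^2/t(x)\,d\mu$. Keep $t$ free: for each fixed $t$, the level-set inequality $|f(x)|^2\le 4\,|(1-e^{-tA/2})f(x)|^2$ on $\{|f(x)|^2\ge 4M(t)\mathcal{E}(f)\}$ integrates in $x$ to
\begin{displaymath}
  \int_{\{|f|^2\ge 4M(t)\mathcal{E}(f)\}}|f|^2\,d\mu \;\le\; 4\,\|(1-e^{-tA/2})f\|_2^2\,,
\end{displaymath}
and one then integrates this over \emph{all} $t\in(0,+\infty)$ against $dt/t^2$ (not $dt/t$). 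On the left, Fubini produces exactly the desired quantity, since $M$ is decreasing and $\int_{M^{-1}(y)}^{+\infty}dt/t^2=1/M^{-1}(y)$, giving $\int_X|f(x)|^2/M^{-1}\bigl(|f(x)|^2/(4\mathcal{E}(f))\bigr)\,d\mu = 4\mathcal{E}(f)\int_X N(\cdot)\,d\mu$. On the right, the $t$-integral must be computed \emph{exactly} by spectral calculus: $\int_0^{+\infty}(1-e^{-t\lambda/2})^2\,dt/t^2=\tfrac{\lambda}{2}\int_0^{+\infty}(1-e^{-u})^2u^{-2}\,du=\lambda\ln 2$ (a Frullani integral), so the right-hand side equals $4\ln 2\,\mathcal{E}(f)$ and the constant $\ln 2$ drops out. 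Your crude Nash bound $(1-e^{-t\lambda})^2\le t\lambda$ cannot be used here: it makes the $dt/t^2$ integral diverge at $t=0$, which is presumably why you were led to dyadic decompositions and an unproven auxiliary inequality; neither is needed once the order of operations (integrate in $x$ at fixed $t$, then in $t$ with weight $dt/t^2$, then evaluate the $t$-integral exactly) is set up as above.
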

Both results give (effective) Sobolev inequalities \eqref{eq:1} in the
polynomial decay case for $F$ or $L$. At first, we will see in
\eqref{eq:24} that the transform from $F$ to $G$ is increasing in
general, while $G$ to $H$ is decreasing. Therefore, if $F(\lambda)
\leq C \lambda^\alpha$ for $\alpha > 1$, then $G(\lambda) \leq C_1
\lambda^{\alpha-1}$ with $C_1 = \frac{C \alpha }{\alpha-1}$, and $H(y)
\geq C_1^{\frac{1}{1-\alpha}} y^{\frac{\alpha}{\alpha-1}}$. Hence
\eqref{eq:3} reads $\|f \|_{2\alpha/(\alpha-1)} \leq 2
C_1^{\frac{1}{2\alpha}} \|A^{1/2}f\|_2$.

Other related inequalities: generalised Moser, Nash and Faber--Krahn
inequalities are stated in Theorem~\ref{thm:Nash-Moser}.  Also an
application of Theorem~\ref{thm:H-Sobolev} is given below to the study
of $\ell^2$-cohomology of coverings of simplical complexes, but we
will first consider another issue.

\subsection{From pure to mixed states}
\label{sec:from-pure-mixed}

Namely we note that, from the quantum-mechanical viewpoint,
Theorems~\ref{thm:H-Sobolev} and \ref{thm:N-Sobolev} are inequalities
dealing with the density $|f(x)|^2$ of a single particle, or pure
state. Since we start from the knowledge of a strong ``collective
data'', related to the vector space $E_\lambda$, it is tentative to
look for a collective version of \eqref{eq:3}; that would handle many
functions simultaneously. A classical approach in statistical quantum
mechanics consists in replacing the orthogonal projection $\Pi_f$ on
$f$, by a mixed state, that is a positive linear combination of such
projections, or more generally by a positive operator $\rho$, see
e.g. \cite{Wikipedia}.

When dealing with a pure state, $|f(x)|^2$ interprets as the diagonal
value $K_{\rho}(x,x)$ of the kernel of $\rho =\Pi_f$. We need then to
extend this notion to general density operators $\rho$.  Moreover it
is also useful in geometry to consider operators acting on vector
valued, or even Hilbert valued functions. For instance, one may work
on differential forms of higher degrees. One can also consider
$\Gamma$-coverings $\widetilde M $ of compact manifolds $M$; in which
case one may set $X = \Gamma$ and use $L^2(\widetilde M) = L^2(\Gamma,
H)$ with $H= L^2 (\mathcal{F})$ for a fundamental domain
$\mathcal{F}$. To handle such cases, we will rely on the following
approach.

\begin{defn}
  \label{def:P-measure}
  Let $(X,\mu)$ be a $\sigma$-finite measure space, $H$ a separable
  Hilbert space and $P$ a positive operator acting on $\mathcal{H} =
  L^2(X,H)= L^2(X, \mu) \otimes H$. Then, given a measurable $\Omega
  \subset X$, the following trace
  \begin{equation}
    \label{eq:5}
    \nu_P(\Omega) = \tau(\chi_\Omega P \chi_\Omega) = \tau (P^{1/2}
    \chi_\Omega P^{1/2}) 
  \end{equation}
  defines an absolutely continuous measure on $X$ with respect to
  $\mu$.  Its Radon--Nikodym derivative $\dsp D \nu_P =\frac{d
    \nu_P}{d \mu} $ will be called the \emph{density function} of $P$,
  and
  \begin{displaymath}
    D(P) = \|D
    \nu_P\|_\infty = \sup_\Omega \frac{\nu_P(\Omega)}{\mu(\Omega)} 
  \end{displaymath}
  the \emph{density} of $P$.
\end{defn}

The following properties summarise the relationships between this
density, ultracontractivity and von Neumann $\Gamma$-trace.  We refer
for instance to \cite[\S 2]{Pansu1} for an introduction on this last
subject.

\begin{prop}
  \label{prop:density}
  $\bullet$ With $P$ and $\mathcal{H}$ as above, one has for any
  Hilbert basis $(e_i)$ of $\mathcal{H}$
  \begin{equation}
    \label{eq:6}
    \nu_P(\Omega) = \int_\Omega \sum_i \|(P^{1/2}e_i)(x)\|_H^2 d\mu(x) \quad
    \mathrm{and} \quad D\nu_P(x) = \sum_i \|(P^{1/2}e_i)(x)\|_H^2 \,.
  \end{equation}

  $\bullet$ If $H$ is finite dimensional, a positive $P$ is
  ultracontractive if and only if it has a bounded density and
  \begin{equation}
    \label{eq:7}
    \|P\|_{1,\infty}  \leq D(P) \leq (\dim H) \|P\|_{1,\infty} \,.
  \end{equation}

  $\bullet$ When $X$ is a locally compact group $\Gamma$ with its Haar
  measure, and $P$ a translation invariant operator, then the density
  function of $P$ is a constant number, so that $\nu_P(\Omega) = D(P)
  \mu(\Omega)$. Moreover, it coincides with von Neumann $\Gamma$-trace
  of $P$ when $\Gamma$ is discrete. Namely if $K_P$ denotes the kernel
  of $P$, one has in this case
  \begin{equation}
    \label{eq:8}
    D(P) = \tau_H(K_P(e,e)) =  \tau_\Gamma (P) \,.
  \end{equation}
\end{prop}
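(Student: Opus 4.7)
For the first bullet, I would fix any Hilbert basis $(e_i)$ of $\mathcal{H}$ and expand the trace in \eqref{eq:5}:
\begin{equation*}
\nu_P(\Omega) = \tau(P^{1/2}\chi_\Omega P^{1/2}) = \sum_i \|\chi_\Omega P^{1/2}e_i\|_{\mathcal{H}}^2 = \sum_i \int_\Omega \|(P^{1/2}e_i)(x)\|_H^2\, d\mu(x).
\end{equation*}
Tonelli (all summands are non-negative) exchanges the sum and integral, and uniqueness of the Radon--Nikodym derivative reads off $D\nu_P(x) = \sum_i \|(P^{1/2}e_i)(x)\|_H^2$.

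For the second bullet, identity \eqref{eq:2} together with self-adjointness of $P^{1/2}$ gives $\|P\|_{1,\infty} = \|P^{1/2}\|_{1,2}^2 = \|P^{1/2}\|_{2,\infty}^2$. Writing $P^{1/2}f(x) = \sum_i \langle f,e_i\rangle (P^{1/2}e_i)(x)$ and applying Cauchy--Schwarz pointwise against the first bullet yields $\|(P^{1/2}f)(x)\|_H \le \|f\|_2\,(D\nu_P(x))^{1/2}$, so $\|P\|_{1,\infty} \le D(P)$. For the converse when $\dim H = d$, fix an orthonormal basis $(v_k)_{k=1}^d$ of $H$ and set $\psi_k f(x) = \langle f(x),v_k\rangle_H$, so $\psi_k^*\phi = \phi\otimes v_k$. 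Applying the first bullet to $T_k T_k^* = \psi_k P\psi_k^*$ on $L^2(X)$ with $T_k = \psi_k P^{1/2}$ gives $\sum_i |T_k e_i(x)|^2 = D\nu_{\psi_k P\psi_k^*}(x)$, and Parseval on $H$ then yields the orthogonal decomposition $D\nu_P(x) = \sum_k D\nu_{\psi_k P\psi_k^*}(x)$. Each summand satisfies $\|\psi_k P\psi_k^*\|_{1,\infty} \le \|P\|_{1,\infty}$ (as $\psi_k^*$ is $L^1$-isometric and $\psi_k$ is $L^\infty$-non-expansive), so it suffices to establish the scalar case $D(S) \le \|S\|_{1,\infty}$. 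For that, at a.e.\ $x_0$, testing $S^{1/2}:L^2\to L^\infty$ on the unit vectors $f_N = c_N^{-1}\sum_{i\le N}\overline{S^{1/2}e_i(x_0)}\,e_i$ with $c_N = (\sum_{i\le N}|S^{1/2}e_i(x_0)|^2)^{1/2}$ forces $c_N = |S^{1/2}f_N(x_0)| \le \|S\|_{1,\infty}^{1/2}$; letting $N\to\infty$ gives $D\nu_S(x_0) \le \|S\|_{1,\infty}$, and summing over $k$ yields $D(P) \le d\,\|P\|_{1,\infty}$.

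For the third bullet, $P$ being translation-invariant means it commutes with, say, left translations $L_g$, while $\chi_{g\Omega} = L_g\chi_\Omega L_{g^{-1}}$; cyclicity of the trace then gives $\nu_P(g\Omega) = \nu_P(\Omega)$, so $D\nu_P$ is left-invariant and hence constant by uniqueness of Haar measure. When $\Gamma$ is discrete, $\Omega = \{e\}$ has measure $1$, and $\chi_{\{e\}} P\chi_{\{e\}}$ restricts to $K_P(e,e)\in\End(H)$ on $\delta_e\otimes H$, so $D(P) = \nu_P(\{e\}) = \tr_H K_P(e,e)$, which is the defining formula for the von Neumann $\Gamma$-trace. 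The only genuine subtlety is bookkeeping in the scalar step of the second bullet: the test vectors $f_N$ depend on $x_0$, which the standard trick of replacing them by a countable dense family of $L^2$ vectors resolves off a single null set.
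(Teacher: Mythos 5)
Your proof is correct and follows essentially the same route as the paper: the first bullet by expanding the trace in a basis, the upper bound $D(P)\le d\,\|P\|_{1,\infty}$ by decomposing $\|\cdot\|_H^2$ over an orthonormal basis of $H$ and applying the $\ell^2$-duality (your scalar lemma via the test vectors $f_N$ is exactly the paper's $\sup_{\sum c_i^2\le 1}$ Cauchy--Schwarz step, repackaged through the coordinate maps $\psi_k$), and the third bullet by invariance of $\nu_P$ and evaluation at $\Omega=\{e\}$. Your explicit handling of the pointwise-versus-essential-sup issue via a countable dense family is a detail the paper leaves implicit.
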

In this setting, the mixed state version of
Theorem~\ref{thm:H-Sobolev} is the following.

\begin{thm}
  \label{thm:rho-Sobolev}
  Let $(X,\mu)$ be a measure space, $H$ an Hilbert space, and $A$ a
  positive self-adjoint operator on $\mathcal{H} = L^2(X,H)$.

  Suppose that the spectral projections $\Pi_\lambda= \chi_A (]0,
  \lambda])$ have finite density $F(\lambda) = D(\Pi_\lambda) $, and
  that $G(\lambda) = \int_0^\lambda \frac{d F(u)}{u}$ converges. Let
  $\rho$ be a positive operator such that $\rho = 0$ on $\ker A$. Then
  \begin{equation}
    \label{eq:9}
    \int_X G^{-1} \Bigl( \frac{D \nu_\rho(x)}{4 \|\rho^{1/2} A \rho^{1/2}
      \|_{2,2}} \Bigr) d \nu_\rho
    \leq 4 \mathcal{E}(\rho)\,,
  \end{equation}
  where
  \begin{displaymath}
    \mathcal{E}(\rho) = \tau(\rho^{1/2} A
    \rho^{1/2}) = \tau(A^{1/2} \rho A^{1/2}) \ (= \tau(A\rho) \quad
    \mathrm{if\ finite})\,,
  \end{displaymath}
  and $\|\rho^{1/2} A \rho^{1/2} \|_{2,2} $ is the $L^2-L^2$ norm of
  $\rho^{1/2} A \rho^{1/2}$.
\end{thm}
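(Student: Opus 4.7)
The plan is to adapt the spectral-splitting proof of Theorem~\ref{thm:H-Sobolev} to the mixed-state setting, with the density function $D\nu_\rho$ playing the role of $|f|^2$ and $B := \|\rho^{1/2}A\rho^{1/2}\|_{2,2}$ in the role of the energy normalization $\mathcal{E}(f)$. For each $\lambda > 0$, the identity $I = \Pi_\lambda + \Pi_\lambda^\perp$ yields the decomposition $\rho = \rho^{1/2}\Pi_\lambda\rho^{1/2} + \rho^{1/2}\Pi_\lambda^\perp\rho^{1/2}$, and since the trace formula \eqref{eq:5} is additive in $P \geq 0$,
\begin{equation*}
  D\nu_\rho(x) = D\nu_{\rho^{1/2}\Pi_\lambda\rho^{1/2}}(x) + D\nu_{\rho^{1/2}\Pi_\lambda^\perp\rho^{1/2}}(x).
\end{equation*}
The high-spectrum term is easy: $A$ commutes with $\Pi_\lambda$ and $A \geq \lambda$ on the range of $\Pi_\lambda^\perp$, so $\lambda\Pi_\lambda^\perp \leq A$ as self-adjoint operators, whence $\rho^{1/2}\Pi_\lambda^\perp\rho^{1/2} \leq \lambda^{-1}\rho^{1/2}A\rho^{1/2}$ and $D\nu_{\rho^{1/2}\Pi_\lambda^\perp\rho^{1/2}}(x) \leq \lambda^{-1} D\nu_{\rho^{1/2}A\rho^{1/2}}(x)$.

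For the low-spectrum term I would factor $\rho^{1/2}\Pi_\lambda\rho^{1/2} = X^*(A^{-1}\Pi_\lambda)X$ with $X = A^{1/2}\rho^{1/2}$, which is legitimate because $\rho$ vanishes on $\ker A$, so $\rho^{1/2}$ takes values in $(\ker A)^\perp$, where $A^{-1}\Pi_\lambda$ is meaningful. Integrating the hypothesis $D\nu_{\Pi_u}(x) \leq F(u)$ against the Stieltjes measure $du/u$ gives the pointwise bound
\begin{equation*}
  D\nu_{A^{-1}\Pi_\lambda}(x) = \int_0^\lambda u^{-1}\, dD\nu_{\Pi_u}(x) \leq \int_0^\lambda \frac{dF(u)}{u} = G(\lambda),
\end{equation*}
and combining this with the Cauchy--Schwarz-for-kernels estimate $\|(A^{-1}\Pi_\lambda)^{1/2} g(x)\|_H^2 \leq D\nu_{A^{-1}\Pi_\lambda}(x)\,\|g\|_2^2$ and a singular value decomposition $X = \sum_k s_k |u_k\rangle\langle v_k|$ (whose non-zero singular values satisfy $s_k^2 \leq \|X\|_{2,2}^2 = B$) should yield a pointwise bound of the form $D\nu_{\rho^{1/2}\Pi_\lambda\rho^{1/2}}(x) \leq c\, B\, G(\lambda)$. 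Feeding the two estimates into the splitting gives
\begin{equation*}
  D\nu_\rho(x) \leq c\,B\,G(\lambda) + \lambda^{-1}\, D\nu_{\rho^{1/2}A\rho^{1/2}}(x) \qquad \text{for every } \lambda > 0.
\end{equation*}
Optimizing pointwise --- say, choosing $\lambda(x) = G^{-1}(D\nu_\rho(x)/(4cB))$ so that the first term equals $D\nu_\rho(x)/4$ --- forces $\lambda(x)\, D\nu_\rho(x) \leq \kappa\, D\nu_{\rho^{1/2}A\rho^{1/2}}(x)$ for some absolute constant $\kappa$, and integrating against $d\mu$ while using $\int D\nu_{\rho^{1/2}A\rho^{1/2}}\,d\mu = \tau(\rho^{1/2}A\rho^{1/2}) = \mathcal{E}(\rho)$ then delivers \eqref{eq:9}.

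The hard part is the low-spectrum density bound. A naive Hilbert--Schmidt argument only gives $D\nu_{\rho^{1/2}\Pi_\lambda\rho^{1/2}}(x) \leq G(\lambda)\,\tau(X^*X) = G(\lambda)\,\mathcal{E}(\rho)$, which is much weaker than $G(\lambda)\,B$ when $B \ll \mathcal{E}(\rho)$, i.e.\ precisely for genuinely mixed states. Upgrading from the trace $\mathcal{E}(\rho)$ to the operator norm $B$ forces one to use the singular-value structure of $X$ rather than just its HS norm, and to track carefully which spectral components of $A^{-1}\Pi_\lambda$ are picked up by the left singular vectors $u_k$ of $X$ at the point $x$; this is the technical heart of the argument and where I expect the bulk of the work to lie.
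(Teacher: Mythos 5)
Your overall architecture (split along $\Pi_\lambda$, bound the high part by $\lambda^{-1}\rho^{1/2}A\rho^{1/2}$, optimize in $\lambda$, integrate) is sound, and your high-spectrum estimate and final integration are fine. The gap is exactly where you locate it, but it is worse than "technical work to be done": the low-spectrum bound $D\nu_{\rho^{1/2}\Pi_\lambda\rho^{1/2}}(x)\leq c\,B\,G(\lambda)$ does not follow from an SVD of $X=A^{1/2}\rho^{1/2}$ together with the pointwise kernel bound for $A^{-1}\Pi_\lambda$, and the abstract inequality it would rest on, namely $D\nu_{X^*PX}(x)\leq c\,\|X\|_{2,2}^2\,D(P)$ for positive $P$ and bounded $X$, is false. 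Take $H=\C$, $w=\chi_{[0,1]}$, $b=N^{1/2}\chi_{[0,1/N]}$, $P=\Pi_w$ and $X=|w\rangle\langle b|$: then $\|X\|_{2,2}=1$, $D(P)=1$, but $X^*PX=\Pi_b$ has density $N$ on $[0,1/N]$. The structural reason is that $\nu_{X^*PX}(\Omega)=\|P^{1/2}X\chi_\Omega\|_{HS}^2$ has $X$ sandwiched \emph{between} $P^{1/2}$ and $\chi_\Omega$, so neither factor of the HS submultiplicativity $\|ST\|_{HS}\leq\|S\|_{2,2}\|T\|_{HS}$ lets you extract $\|X\|_{2,2}$ while keeping $\|P^{1/2}\chi_\Omega\|_{HS}^2=\nu_P(\Omega)$; equivalently, $(X^*PX)^{1/2}=U^*P^{1/2}X$ for a partial isometry $U$ whose presence destroys any pointwise localization, so the left singular vectors of $X$ can pile up density at a point $x$ that $D\nu_P$ does not see. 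Any repair must use the specific coupling between $P=A^{-1}\Pi_\lambda$ and $X=A^{1/2}\rho^{1/2}$, and you give no argument for that.

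The paper sidesteps the problem by \emph{not} splitting the positive operator $\rho$ additively. It splits the Hilbert--Schmidt operator $\rho^{1/2}\chi_\Omega=\rho^{1/2}A^{1/2}\,(A^{-1/2}\Pi_\lambda\chi_\Omega)+\rho^{1/2}\Pi_{>\lambda}\chi_\Omega$ and applies the triangle inequality for $\|\cdot\|_{HS}$. In the first term the operator-norm factor $\|\rho^{1/2}A^{1/2}\|_{2,2}$ now sits on the \emph{outside}, so it factors out legitimately, leaving $\|A^{-1/2}\Pi_\lambda\chi_\Omega\|_{HS}^2=\nu_{A^{-1}\Pi_\lambda}(\Omega)\leq G(\lambda)\mu(\Omega)$ by the mixed-state analogue of Proposition~\ref{prop:2.1}. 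Squaring with $(a+b)^2\leq 2a^2+2b^2$ gives
\begin{equation*}
  \nu_\rho(\Omega)\leq 2\|\rho^{1/2}A\rho^{1/2}\|_{2,2}\,\nu_{A^{-1}\Pi_\lambda}(\Omega)+2\,\nu_{\Pi_{>\lambda}\rho\Pi_{>\lambda}}(\Omega)\,,
\end{equation*}
hence after taking densities $D\nu_\rho(x)\leq 4\,D\nu_{\Pi_{>\lambda}\rho\Pi_{>\lambda}}(x)$ on the set where $D\nu_\rho(x)\geq 4\|\rho^{1/2}A\rho^{1/2}\|_{2,2}G(\lambda)$; integrating over that set in $(x,\lambda)$ and using $\int_0^{+\infty}\tau(\rho^{1/2}\Pi_{>\lambda}\rho^{1/2})\,d\lambda=\tau(\rho^{1/2}A\rho^{1/2})$ yields \eqref{eq:9}. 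I recommend you replace your additive decomposition and the missing low-spectrum lemma by this Hilbert--Schmidt splitting; your pointwise optimization in $\lambda$ can then be kept or replaced by the integration over the level set, which is what produces the stated constants.
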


The heat version of the Sobolev inequality Theorem~\ref{thm:N-Sobolev}
has also a mixed state or $\rho$-version, replacing above $F$ by $L
(t)= D(e^{-tA})$, and $G$ by $\widetilde M$ where $\widetilde{ M}
(1/t) = \int_t^{+\infty } L(s) ds$.

\medskip

To illustrate Theorem~\ref{thm:rho-Sobolev}, suppose again that $F$
has a polynomial growth
\begin{equation}
  \label{eq:10}
  F(\lambda) \leq C \lambda^\alpha \quad \mathrm{for\ some} \quad
  \alpha > 1 \,. 
\end{equation}
As examples of mixed states, we take $\rho$ to be the projection onto
a $N$-dimensional linear space $V$ of functions such that $\mathcal{E}
(f) = \langle Af, f \rangle_2 \leq \lambda \|f\|_2^2$ for all $f \in
V$. Then \eqref{eq:6} yields
\begin{displaymath}
  D \nu_\rho (x ) = \sum_{i=1}^N \|f_i(x)\|_H^2\,,
\end{displaymath}
for any orthonormal basis $(f_i)$ of $V$.  Since as previously
$G(\lambda) \leq C_1 \lambda^{\alpha-1} $ with $C_1=
\frac{C\alpha}{\alpha-1} $, the $\rho$-Sobolev inequality \eqref{eq:9}
provides
\begin{equation}
  \label{eq:11}
  \int_X \bigl( \sum_{i=1}^N \|f_i(x)\|_H
  ^2
  \bigr)^{\frac{\alpha}{\alpha-1}} d \mu \leq C_2
  \lambda^{\frac{1}{\alpha-1}} \sum_{i=1}^N \mathcal{E}(f_i) \leq C_2
  N \lambda^{\frac{\alpha}{\alpha-1}} \,,
\end{equation}
with $C_2 = 4^{\frac{\alpha}{\alpha -1}} C_1^{\frac{1}{\alpha-1}}$.

\smallskip

If $\Omega$ is any domain of finite measure, these inequalities give
back some control of the spectral distribution of the Dirichlet
spectrum of $\mathcal{E}$ on $\Omega$. Namely, if the states $f_i$ are
supported in $\Omega$, then by Jensen, one finds that
\begin{displaymath}
  (N/ \mu (\Omega))^{\alpha/\alpha-1} =  \Bigl( \int_\Omega \sum_{i=1}^N
  \|f_i(x)\|_H^2 d\mu / \mu(\Omega) \Bigr)^{\alpha/\alpha-1} \leq C_2
  (N/\mu(\Omega)) 
  \lambda^{\frac{\alpha}{\alpha-1}} \,,
\end{displaymath}
yielding
\begin{equation}
  \label{eq:12}
  \frac{\dim V}{\mu(\Omega)} \leq \frac{4^{\alpha} \alpha}{\alpha-1}
  C \lambda^\alpha = C_3 \lambda^\alpha, 
\end{equation}
for any $V$ supported in $\Omega$ and such that $\mathcal{E}(f)\leq
\lambda \|f\|_2^2$ on $V$.

In the case $\dim V= 1$, i.e. the pure state case, this interprets as
a Faber--Krahn inequality, on the lower bound for the Dirichlet
spectrum of $\mathcal{E}$ on $\Omega$. While using mixed states, one
gets actually the control of the \emph{whole} geometrical spectral
repartition function
\begin{equation}
  \label{eq:13}
  F_\Omega^{\dim}(\lambda) =
  \sup_{V \subset \subset \Omega} \{\dim V \mid \mathcal{E} \leq
  \lambda \ \mathrm{on\ }V\} \leq 
  \mu(\Omega) C_3 \lambda^\alpha \,.
\end{equation}
Note that if $X$ itself has a finite measure, this control is coherent
with the starting hypothesis \eqref{eq:10} on the spectral density $F$
of $E_\lambda$, since in finite measure
\begin{equation}
  \label{eq:14}
  F_X^{\dim} = \dim E_\lambda \leq \mu(X) F \,,
\end{equation}
as comes from Definition~\ref{def:P-measure} and
\begin{equation}
  \label{eq:15}
  \dim V = \tau(\Pi_V) = \nu_{\Pi_V}(X)  \leq
  \mu(X) D( \Pi_V) \,.
\end{equation}
We note also that for invariant operators and spaces on finite groups,
\eqref{eq:14} and \eqref{eq:15} are equalities by
Proposition~\ref{prop:density}. Hence the ``$\rho$-Sobolev''
inequalities \eqref{eq:4} capture back the bound on the spectral
density $F$, at least in this simple setting.

\subsection{Moser inequalities for mixed states and global Faber--Krahn
  inequalities}
\label{sec:Moser-Faber-Krahn}

As done in the polynomial case above, one can show that the previous
$\rho$-Sobolev inequalities also imply Faber--Krahn and spectral
inequalities for other spectral densities, see
Proposition~\ref{prop:Sobolev-Faber-Krahn}. However this approach
assumes some thinness of the spectrum, as required by the convergence
of $G$ or $M$. As Faber--Krahn inequalities make sense for thick
spectrum, we now present another viewpoint.

The starting point is a generalised Moser inequality for mixed
state. We will give an integral version, as above, but also a discrete
one, associated to a partition of $X$ into $\bigsqcup_i \Omega_i$.  We
state the result under two close sets of hypothesis; depending whether
one remove the kernel of $A$ from the spectral density and the states,
as needed in the previous approach, or not. The integral version is
the following.

\begin{thm}
  \label{thm:F-Sobolev} Let $A$ be a positive self-adjoint operator on
  $L^2(X,H)$ and $\rho$ be a non-zero positive operator. Suppose
  either
  \begin{itemize}
  \item $\rho =0$ on $\ker A$ and $F_x(\lambda) = D
    \nu_{\Pi_{]0,\lambda]}}(x)$ denotes the density at $x$ of
    $\Pi_{]0,\lambda]} = \chi_A(]0, \lambda])$,
  \item or $\rho$ is whatever and $F_x(\lambda) = D
    \nu_{\Pi_{[0,\lambda]}}(x)$ is the density of $\Pi_{[0,\lambda]} =
    \chi_A([0, \lambda])$.
  \end{itemize}
  Then the following generalised Moser inequality holds for the
  density operator $\rho$
  \begin{equation}
    \label{eq:16}
    \int_X F_x^{-1}\Bigl( \frac{D \nu_\rho(x)}{4 \|\rho\|_{2,2}}\Bigr) d \nu_\rho
    \leq 4  \mathcal{E}(\rho)\,.
  \end{equation}
\end{thm}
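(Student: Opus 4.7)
The plan is to split $D\nu_\rho(x)$ into low- and high-frequency pieces at a spectral cutoff $\lambda$, optimize $\lambda$ as a function of $x$, and integrate using the spectral identity $\int_0^\infty \chi_A((\lambda,\infty))\, d\lambda = A$. For the decomposition, I would fix $\lambda > 0$, set $\Pi_\lambda^\perp = I - \Pi_\lambda$, and write $(\rho^{1/2} e_i)(x) = (\Pi_\lambda \rho^{1/2} e_i)(x) + (\Pi_\lambda^\perp \rho^{1/2} e_i)(x)$ inside the density formula \eqref{eq:6}. Applying $\|a+b\|_H^2 \leq 2(\|a\|_H^2 + \|b\|_H^2)$ pointwise, summing over $i$, and using the factorizations $\Pi_\lambda \rho \Pi_\lambda = BB^*$ with $B = \Pi_\lambda \rho^{1/2}$ (and its orthogonal counterpart) yields the pointwise bound
\[
D\nu_\rho(x) \leq 2\, D\nu_{\Pi_\lambda \rho \Pi_\lambda}(x) + 2\, D\nu_{\Pi_\lambda^\perp \rho \Pi_\lambda^\perp}(x).
\]

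For the low-frequency piece, the operator inequality $\rho \leq \|\rho\|_{2,2} I$ gives $\Pi_\lambda \rho \Pi_\lambda \leq \|\rho\|_{2,2}\,\Pi_\lambda$, which descends via Radon--Nikodym to $D\nu_{\Pi_\lambda \rho \Pi_\lambda}(x) \leq \|\rho\|_{2,2}\, F_x(\lambda)$ almost everywhere. I would then set $\lambda(x) := F_x^{-1}\bigl(D\nu_\rho(x)/(4\|\rho\|_{2,2})\bigr)$, so that for every $\lambda < \lambda(x)$ the low-frequency contribution is at most $D\nu_\rho(x)/2$ and can be absorbed, giving
\[
D\nu_\rho(x) \leq 4\, D\nu_{\Pi_\lambda^\perp \rho \Pi_\lambda^\perp}(x) \quad \text{for all } 0 \leq \lambda < \lambda(x).
\]
Next, applying Fubini to
\[
\int_X F_x^{-1}\Bigl( \tfrac{D\nu_\rho(x)}{4\|\rho\|_{2,2}} \Bigr) d\nu_\rho = \int_X \lambda(x)\, d\nu_\rho = \int_0^\infty \int_{\{\lambda(x) > \lambda\}} D\nu_\rho(x)\, d\mu(x)\, d\lambda,
\]
bounding the inner integral by $4\,\tau(\Pi_\lambda^\perp \rho \Pi_\lambda^\perp)$ via the previous display, and invoking $\int_0^\infty \tau(\Pi_\lambda^\perp \rho \Pi_\lambda^\perp)\, d\lambda = \tau(A\rho) = \mathcal{E}(\rho)$ would produce the announced constant $4\mathcal{E}(\rho)$.

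The two hypotheses of the theorem diverge only at this last identity: under the first, $\Pi_\lambda^\perp$ contains the projection onto $\ker A$, whose infinite contribution to $\int_0^\infty \Pi_\lambda^\perp d\lambda$ is killed by the assumption $\rho = 0$ on $\ker A$; under the second, $\Pi_\lambda^\perp = \chi_A((\lambda, \infty))$ and the identity is immediate. The main obstacle I expect is bookkeeping rather than ideas: the pointwise descent of operator inequalities to densities must be handled through Radon--Nikodym, the right-continuous inverse $\lambda(x)$ must be verified jointly measurable in order to apply Fubini, and one must secure trace-class integrability of $\tau(\Pi_\lambda^\perp \rho \Pi_\lambda^\perp)$ as a function of $\lambda$ uniformly across both hypotheses.
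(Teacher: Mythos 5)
Your proposal is correct and follows essentially the same route as the paper: the same spectral splitting at a cutoff $\lambda$, the same absorption of the low-frequency term on the set where $D\nu_\rho(x)\geq 4\|\rho\|_{2,2}F_x(\lambda)$, and the same integration in $(x,\lambda)$ via $\int_0^{+\infty}\tau(\Pi_{>\lambda}\rho\,\Pi_{>\lambda})\,d\lambda=\mathcal{E}(\rho)$, including the correct treatment of $\ker A$ in the two hypotheses. The only cosmetic difference is that you derive the pointwise bound $D\nu_\rho(x)\leq 2\|\rho\|_{2,2}F_x(\lambda)+2D\nu_{\Pi_{>\lambda}\rho\Pi_{>\lambda}}(x)$ from the basis formula \eqref{eq:6} and the operator inequality $\Pi_\lambda\rho\Pi_\lambda\leq\|\rho\|_{2,2}\Pi_\lambda$, whereas the paper obtains the same inequality by splitting $\rho^{1/2}\chi_\Omega$ in Hilbert--Schmidt norm and taking Radon--Nikodym derivatives.
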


The corresponding result for partitions writes:

\begin{thm}
  \label{thm:rho-Moser-partition}
  Let $A$ as above and $X = \bigsqcup_I \Omega_i$ be a discrete
  measurable partition of $X$.

  Consider the $\Pi_\lambda$-measures of $\Omega_i$
  \begin{displaymath}
    F_{\Omega_i}(\lambda) = \tau (\chi_{\Omega_i} \Pi_\lambda
    \chi_{\Omega_i}) = \nu_{\Pi_\lambda} (\Omega_i) \,,
  \end{displaymath}
  where $\Pi_\lambda$ denotes either $\Pi_{]0,\lambda]}$ or
  $\Pi_{[0,\lambda]}$. Let $\rho$ be a non-zero positive operator,
  with the additional assumption that $\rho=0$ on $\ker A$ if using
  $\Pi_{]0,\lambda]}$.
  
  In this discrete setting the generalised Moser inequality writes
  \begin{equation}
    \label{eq:17}
    \sum_i F_{\Omega_i}^{-1} \Bigl( \frac{\nu_\rho (\Omega_i)}{4
      \|\rho\|_{2,2}} \Bigr) \nu_\rho (\Omega_i) \leq 4 \mathcal{E}(\rho)\,.
  \end{equation}
\end{thm}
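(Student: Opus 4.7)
The plan is to decompose $\rho$ through the spectral projection $\Pi_\lambda$ of $A$ and to control the four resulting blocks of $\nu_\rho(\Omega_i)$ separately. Writing $Q_\lambda = I - \Pi_\lambda$, expanding $I = \Pi_\lambda + Q_\lambda$ on both sides of $\rho$, sandwiching by $\chi_{\Omega_i}$ and taking the trace gives
\[
  \nu_\rho(\Omega_i) = D_i^{PP} + D_i^{PQ} + D_i^{QP} + D_i^{QQ},
\]
where $D_i^{PP} = \tau(\chi_{\Omega_i}\Pi_\lambda \rho \Pi_\lambda \chi_{\Omega_i})$ and the other three are defined analogously. The two diagonal pieces are non-negative; I will bound each by an appropriate quantity and then absorb the cross terms by a Hilbert--Schmidt Cauchy--Schwarz argument.

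For the low-frequency block, cyclicity of the trace yields $D_i^{PP} = \tau\bigl((\Pi_\lambda \chi_{\Omega_i} \Pi_\lambda)\,\rho\bigr)$. Applied to the two non-negative operators $\Pi_\lambda \chi_{\Omega_i}\Pi_\lambda$ and $\rho$, the standard inequality $\tau(MN) \leq \|N\|_{\mathrm{op}}\tau(M)$ gives $D_i^{PP} \leq \|\rho\|_{2,2}\, F_{\Omega_i}(\lambda)$. For the high-frequency block, the spectral resolution $A = \int_0^{+\infty}\mu\,dE_\mu$ defines a positive scalar measure $d\mathcal{E}_i(\mu) := \tau(\chi_{\Omega_i}\rho^{1/2}\,dE_\mu\,\rho^{1/2}\chi_{\Omega_i})$ on $[0,+\infty)$ (supported on $(0,+\infty)$ in the $\Pi_{]0,\lambda]}$ variant thanks to the hypothesis $\rho|_{\ker A}=0$), with $\int d\mathcal{E}_i = \nu_\rho(\Omega_i)$ and $\int \mu\,d\mathcal{E}_i = \nu_{\rho^{1/2}A\rho^{1/2}}(\Omega_i)$. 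Since $Q_\lambda$ commutes with $A$ and cuts out the spectral interval $(\lambda,+\infty)$, Chebyshev yields
\[
  D_i^{QQ}(\lambda) \;=\; \int_{(\lambda,+\infty)} d\mathcal{E}_i \;\leq\; \frac{\nu_{\rho^{1/2}A\rho^{1/2}}(\Omega_i)}{\lambda}.
\]

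Putting $X = \rho^{1/2}\Pi_\lambda \chi_{\Omega_i}$ and $Y = \rho^{1/2} Q_\lambda \chi_{\Omega_i}$, direct computation gives $D_i^{PQ} = \tau(X^* Y)$, $\|X\|_{HS}^2 = D_i^{PP}$ and $\|Y\|_{HS}^2 = D_i^{QQ}$; the Hilbert--Schmidt Cauchy--Schwarz inequality then produces $|D_i^{PQ}|^2 \leq D_i^{PP} D_i^{QQ}$. Since $D_i^{QP} = \overline{D_i^{PQ}}$, the arithmetic--geometric mean inequality gives $D_i^{PQ}+D_i^{QP} = 2\Re D_i^{PQ} \leq D_i^{PP}+D_i^{QQ}$, so combining with the two earlier diagonal bounds I obtain
\[
  \nu_\rho(\Omega_i) \;\leq\; 2 D_i^{PP} + 2 D_i^{QQ} \;\leq\; 2\|\rho\|_{2,2}\, F_{\Omega_i}(\lambda) + \frac{2\, \nu_{\rho^{1/2}A\rho^{1/2}}(\Omega_i)}{\lambda}
\]
for every $\lambda > 0$.

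Finally, for each $i$ I pick $\lambda = \lambda_i := F_{\Omega_i}^{-1}\bigl(\nu_\rho(\Omega_i)/(4\|\rho\|_{2,2})\bigr)$ via the right-continuous inverse, so that $F_{\Omega_i}(\lambda) \leq \nu_\rho(\Omega_i)/(4\|\rho\|_{2,2})$ for all $\lambda < \lambda_i$. Passing to the limit $\lambda \nearrow \lambda_i$ in the preceding inequality absorbs the first term into $\nu_\rho(\Omega_i)/2$, leaving $\lambda_i \nu_\rho(\Omega_i) \leq 4\, \nu_{\rho^{1/2}A\rho^{1/2}}(\Omega_i)$; summing over $i$ and using $\sum_i \nu_{\rho^{1/2}A\rho^{1/2}}(\Omega_i) = \tau(\rho^{1/2}A\rho^{1/2}) = \mathcal{E}(\rho)$ closes the proof. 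The main obstacle is the cross-term Cauchy--Schwarz, whose AM--GM rebalancing is exactly what forces the factor $4$ in the statement; the $\epsilon$-limit needed to handle the right-continuous inverse is routine.
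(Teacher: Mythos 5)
Your expansion of $\nu_\rho(\Omega_i)$ into the four blocks, the bound $D_i^{PP}\leq\|\rho\|_{2,2}F_{\Omega_i}(\lambda)$, and the Cauchy--Schwarz absorption of the cross terms are correct and reproduce, in expanded form, the paper's one-line estimate $\nu_\rho(\Omega_i)=\|\rho^{1/2}\chi_{\Omega_i}\|_{HS}^2\leq 2\|\rho\|_{2,2}\,\nu_{\Pi_\lambda}(\Omega_i)+2\,\nu_{\Pi_{>\lambda}\rho\Pi_{>\lambda}}(\Omega_i)$. The gap is in your Chebyshev step. By your own verification $\|Y\|_{HS}^2=D_i^{QQ}$ with $Y=\rho^{1/2}Q_\lambda\chi_{\Omega_i}$, you have $D_i^{QQ}=\tau(\chi_{\Omega_i}Q_\lambda\rho\,Q_\lambda\chi_{\Omega_i})$, whereas $\int_{(\lambda,+\infty)}d\mathcal{E}_i=\tau(\chi_{\Omega_i}\rho^{1/2}Q_\lambda\rho^{1/2}\chi_{\Omega_i})$. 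The operators $Q_\lambda\rho\,Q_\lambda$ and $\rho^{1/2}Q_\lambda\rho^{1/2}$ have the same \emph{global} trace, but their localised traces against $\chi_{\Omega_i}$ differ because $\rho^{1/2}$ and $Q_\lambda$ do not commute; so the identification $D_i^{QQ}=\int_{(\lambda,+\infty)}d\mathcal{E}_i$ is false, and with it the pointwise bound $D_i^{QQ}\leq\lambda^{-1}\nu_{\rho^{1/2}A\rho^{1/2}}(\Omega_i)$ on which your choice of $\lambda_i$ rests. Concretely, on $X=\{1,2\}$ with counting measure and $H=\C$, let $A$ have eigenvalue $10$ on $(1,1)/\sqrt2$ and eigenvalue $1$ on $(1,-1)/\sqrt2$, take $\lambda=5$, $\rho$ the orthogonal projection onto $(0,1)$, and $\Omega_i=\{1\}$. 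Then $Q_\lambda\rho\,Q_\lambda=\tfrac12\,\Pi_{(1,1)/\sqrt2}$ gives $D_i^{QQ}=\tfrac14$, while $\rho^{1/2}A\rho^{1/2}=\tfrac{11}{2}\,\Pi_{(0,1)}$ gives $\nu_{\rho^{1/2}A\rho^{1/2}}(\{1\})=0$; the claimed inequality $\tfrac14\leq 0$ fails.

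The repair is exactly the route the paper takes: do not estimate the high-frequency block pointwise in $(i,\lambda)$. Keep the inequality $\nu_\rho(\Omega_i)\leq 4\,\nu_{\Pi_{>\lambda}\rho\Pi_{>\lambda}}(\Omega_i)$, valid whenever $\nu_\rho(\Omega_i)\geq 4\|\rho\|_{2,2}F_{\Omega_i}(\lambda)$, integrate it in $\lambda$ over $[0,\lambda_i)$ to get $\lambda_i\,\nu_\rho(\Omega_i)\leq 4\int_0^{+\infty}\nu_{\Pi_{>\lambda}\rho\Pi_{>\lambda}}(\Omega_i)\,d\lambda$, and only after summing over $i$ invoke cyclicity of the \emph{full} trace: $\sum_i\nu_{\Pi_{>\lambda}\rho\Pi_{>\lambda}}(\Omega_i)=\tau(\Pi_{>\lambda}\rho\Pi_{>\lambda})=\tau(\rho^{1/2}\Pi_{>\lambda}\rho^{1/2})$, whence $\int_0^{+\infty}\tau(\rho^{1/2}\Pi_{>\lambda}\rho^{1/2})\,d\lambda=\tau(\rho^{1/2}A\rho^{1/2})=\mathcal{E}(\rho)$. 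In other words, your Chebyshev argument in the spectral variable becomes legitimate only once the localisation $\chi_{\Omega_i}$ has been removed by the summation over $i$; performed per block it is not available, and this is precisely why the paper integrates in $\lambda$ rather than optimising it.
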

This yields the following ``global'' Faber--Krahn inequality.
\begin{cor}
  \label{cor:Faber-Krahn}
  With the same notations as above, suppose moreover that $\rho$ is
  supported in a domain $\Omega$ of finite measure and has finite
  energy $\mathcal{E}(\rho)$, then
  \begin{equation}
    \label{eq:18}
    \frac{\tau(\rho)}{4 \|\rho\|_{2,2}} \leq F_\Omega \bigl( 4 \langle A 
    \rangle_\rho \bigr) \leq \mu(\Omega) F \bigl( 4 \langle A 
    \rangle_\rho \bigr) \,.
  \end{equation}
  where $\dsp \langle A \rangle_\rho =
  \frac{\mathcal{E}(\rho)}{\tau(\rho)} =\frac{\tau(A \rho)}{ \tau
    (\rho)} $ is the expectation value of $A$ with respect to $\rho$.

  $\bullet$ In particular the whole Dirichlet spectrum of
  $\mathcal{E}$ on $\Omega$ is controlled for all $\lambda$ by
  \begin{equation}
    \label{eq:19}
    F_\Omega^{\dim} (\lambda) \leq 4 \mu(\Omega) F(4 \lambda) \,,
  \end{equation}
  where as in~\eqref{eq:13}, we define $F_\Omega^{\dim} (\lambda) =
  \sup \{\dim V \mid \mathrm{supp}( V) \subset \Omega \ \mathrm{and}\
  \mathcal{E} \leq \lambda \ \mathrm{on\ }V\}$.
\end{cor}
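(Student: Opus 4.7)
The plan is to derive Corollary~\ref{cor:Faber-Krahn} directly from Theorem~\ref{thm:rho-Moser-partition} by choosing the simplest relevant partition. Since $\rho$ is supported in $\Omega$, I apply the generalised Moser inequality \eqref{eq:17} to the two-piece partition $X = \Omega \sqcup (X \setminus \Omega)$. Because $\rho = \chi_\Omega \rho \chi_\Omega$, its density $D\nu_\rho$ vanishes off $\Omega$, so $\nu_\rho(X\setminus\Omega)=0$ and only the $\Omega$-term survives, leaving
\begin{equation*}
  F_\Omega^{-1}\!\left(\frac{\nu_\rho(\Omega)}{4\|\rho\|_{2,2}}\right) \nu_\rho(\Omega) \leq 4\,\mathcal{E}(\rho).
\end{equation*}

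Next I identify $\nu_\rho(\Omega) = \tau(\chi_\Omega \rho \chi_\Omega) = \tau(\rho)$. Dividing by $\tau(\rho)$ and applying the monotone increasing $F_\Omega$ to both sides (with $F_\Omega^{-1}$ understood as its right-continuous inverse) yields
\begin{equation*}
  \frac{\tau(\rho)}{4\|\rho\|_{2,2}} \leq F_\Omega\!\bigl(4\langle A\rangle_\rho\bigr),
\end{equation*}
which is the first inequality of \eqref{eq:18}. The second inequality $F_\Omega(\lambda) \leq \mu(\Omega)F(\lambda)$ is immediate from Definition~\ref{def:P-measure} applied to $P=\Pi_\lambda$, since $\nu_{\Pi_\lambda}(\Omega) \leq \|D\nu_{\Pi_\lambda}\|_\infty\,\mu(\Omega) = \mu(\Omega)D(\Pi_\lambda) = \mu(\Omega)F(\lambda)$.

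For the spectral bound \eqref{eq:19}, given a finite-dimensional subspace $V$ supported in $\Omega$ with $\mathcal{E}(f) \leq \lambda\|f\|_2^2$ on $V$, I take $\rho = \Pi_V$, the orthogonal projection onto $V$. Then $\|\Pi_V\|_{2,2}=1$, $\tau(\Pi_V)=\dim V$, and, computing $\tau(\Pi_V A \Pi_V)$ in an orthonormal basis $(e_i)$ of $V$, one finds $\mathcal{E}(\Pi_V) = \sum_i \mathcal{E}(e_i) \leq \lambda \dim V$, so $\langle A\rangle_{\Pi_V} \leq \lambda$. Substituting into \eqref{eq:18} and invoking monotonicity of $F$ gives $(\dim V)/4 \leq \mu(\Omega)F(4\lambda)$; taking the supremum over admissible $V$ produces \eqref{eq:19}.

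The only subtle point is that $V$ may meet $\ker A$, so $\Pi_V$ need not vanish there. This is handled by invoking the second variant of Theorem~\ref{thm:rho-Moser-partition}, in which $\Pi_\lambda = \Pi_{[0,\lambda]}$ and no constraint on $\rho\cap\ker A$ is required; the resulting $F$ and $F_\Omega$ automatically include the kernel contribution. No further difficulty arises: the corollary is essentially a bookkeeping specialisation of the Moser partition inequality.
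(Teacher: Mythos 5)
Your proof is correct and follows essentially the same route as the paper: specialise the discrete Moser inequality \eqref{eq:17} to the case where only the $\Omega$-term survives to get $F_\Omega^{-1}\bigl(\tau(\rho)/4\|\rho\|_{2,2}\bigr)\leq 4\langle A\rangle_\rho$, then pass to \eqref{eq:18} via $F_\Omega(F_\Omega^{-1}(y))\geq y$ (which, as the paper notes, requires the right continuity of $F_\Omega$ together with the finiteness of $\langle A\rangle_\rho$ -- your "apply the monotone $F_\Omega$" step is implicitly using exactly this), and finally take $\rho=\Pi_V$ for \eqref{eq:19}. Your remark about using the $\Pi_{[0,\lambda]}$ variant to avoid any hypothesis on $V\cap\ker A$ is the right way to read "with the same notations as above".
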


Thus \eqref{eq:19} extends uniformly, whatever $F$, the bounds
\eqref{eq:13} obtained previously from the $\rho$-Sobolev
inequalities. In some sense it means that the spectral density of a
confined system is controlled by the spectral density of the free
system, up to universal multiplicative constants in volume and
energy. In the case of the Laplacian in $\R^n$, or more general
Schr\"odinger operators, such inequalities have been proved
independently by Cwikel, Lieb and Rosenb{\l}jum, see \cite{Lieb}.

We note that except for these constants $4$, the formula \eqref{eq:19}
looks quite sharp in general. Indeed, as recalled above, one has
$F_\Gamma^{\dim} = \mu(\Gamma) F$ when $A$ is an invariant operator on
a finite group $\Gamma=X$. Hence in general
\begin{equation}
  \label{eq:20}
  F_\Omega^{\dim} (\lambda)\leq \mu (\Omega) F(\lambda)
\end{equation}
is certainly an ideal bound for inequalities like \eqref{eq:19}.

\subsection{Sobolev inequalities and $\ell^2$-cohomology}
\label{sec:sobol-ineq-ell2}

We conclude with an application to geometric analysis of the pure
state case of Sobolev inequalities in Theorem~\ref{thm:H-Sobolev} or
\ref{thm:N-Sobolev}. As they are not restricted to Markovian
operators, these results apply in the following setting. Let $K$ be a
finite simplicial complex and $X \rightarrow K = X / \Gamma$ some
covering. One considers on $X$ the complex of $\ell^2$ $k$-cochains
with the discrete coboundary
\begin{displaymath}
  d_k : \ell^2 X^k \rightarrow \ell^2 X^{k+1} 
\end{displaymath}
dual to the usual boundary $\partial$ of simplexes, see e.g. \cite[\S
3]{Pansu1}.

Its $\ell^2$-cohomology $H_2^{k+1} = \ker d_{k+1} / \im d_k$ splits in
two components :
\begin{itemize}
\item the reduced part $\overline H_2^{k+1} = \ker d_{k+1} /
  \overline{ \im d_k}$, isomorphic to $\ell^2$-harmonic cochains
  $\mathcal{H}_2^{k+1} = \ker d_{k+1} \cap \ker d_k^*$,
\item and the torsion $T_2^{k+1} = \overline{ \im d_k} / \im d_k$.
\end{itemize}
Although this torsion is not a normed space, one can study it by
``measuring'' the unboundedness of $d_k^{-1} $ on $\im d_k$.
We will consider here two different means.

\smallskip - A first one is inspired by $\ell^{p,q}$-cohomology. One
enlarges the space $\ell^2 X^k $ to $\ell^p X^k$ for $p \geq 2$, and
asks whether, for $p$ large enough, one has
\begin{equation}
  \label{eq:21}
  \overline{d_k
    (\ell^2 X^k)}^{\ell^2}  \subset d_k(\ell^p X^k  )\,, 
\end{equation} 
This is satisfied in case the following Sobolev identity holds
\begin{equation}
  \label{eq:22}
  \exists C \quad \mathrm{such\ that }\quad  \|\alpha\|_p \leq C \|d_k
  \alpha \|_2 \quad \mathrm{for\ all}\ \alpha 
  \in (\ker d_k)^\bot  \subset  \ell^2\,. 
\end{equation}
The geometric interest of the rougher formulation \eqref{eq:21} lies
in its stability under the change of $X$ into other bounded homotopy
equivalent spaces, as stated in
Proposition~\ref{prop:invariance-torsion}. Moreover if $\overline
H_2^{k+1}(X)$ vanishes, then \eqref{eq:21} is equivalent to the
vanishing of the torsion of the $\ell^{p,2}$-cohomology of $X$, as
will be seen in Section~\ref{sec:spectr-dens-cohom}.

\smallskip

- The second approach is spectral and relies on the von Neumann
$\Gamma$-trace of $\Pi_{]0,\lambda]}$, i.e to the spectral density by
Proposition~\ref{prop:density}. Consider the $\Gamma$-invariant
self-adjoint $A = d_k^* d_k $ acting on $(\ker d_k)^\bot$ and the
spectral density $F_{\Gamma, k}(\lambda) = \tau_\Gamma(\Pi_{]0,
  \lambda]})$.  This function vanishes near zero if and only if zero
is isolated in the spectrum of $A$, which is equivalent to the
vanishing of the torsion $T_2^{k+1}$. The asymptotic behaviour of
$F_{\Gamma,k}(\lambda)$ when $\lambda \searrow 0$ has a geometric
interest in general since, given $\Gamma$, it is an homotopy invariant
of the quotient space $K$, as shown by Efremov, Gromov and Shubin in
\cite{Efremov,Gromov-Shubin,Gromov-Shubin-erratum}.

One can compare these two notions in the spirit of Varopoulos result
\eqref{eq:1} on functions. In the case of polynomial decay one
obtains.
\begin{thm}
  \label{thm:1.4} Let $K$ be a finite simplicial space and $ X
  \rightarrow K = X / \Gamma$ a covering. Let $F_{\Gamma,k}(\lambda)=
  \dim_\Gamma E_\lambda $ denotes the spectral density function of $A
  = d_k^* d_k$ on $(\ker d_k)^\bot$.

  If $F_{\Gamma,k}(\lambda) \leq C \lambda^{\alpha/2}$ for some
  $\alpha > 2 $, then the Sobolev inequality \eqref{eq:22}, and the
  inclusion \eqref{eq:21}, hold for $1/p \leq 1/2 - 1/\alpha$.
\end{thm}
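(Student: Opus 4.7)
The plan is to apply Theorem~\ref{thm:H-Sobolev} to $A = d_k^* d_k$ acting on $(\ker d_k)^\perp \subset \ell^2 X^k$, after using Proposition~\ref{prop:density} to translate the $\Gamma$-spectral density hypothesis into the ultracontractive spectral decay hypothesis required there.

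First, I would set up the $\Gamma$-equivariant structure. Choose a fundamental domain $\mathcal{F} \subset X$ for the $\Gamma$-action, and let $\mathcal{F}^k$ be its finite set of $k$-simplices. Identify
\begin{displaymath}
  \ell^2 X^k \;\cong\; \ell^2(\Gamma, H), \qquad H = \ell^2(\mathcal{F}^k),
\end{displaymath}
which is a finite-dimensional Hilbert bundle over the discrete group $\Gamma = X^0/\mathrm{orbit}$ in the sense of Definition~\ref{def:P-measure} (with $X$ replaced by the discrete space $\Gamma$ equipped with counting measure). The coboundary $d_k$, and hence $A = d_k^* d_k$ and its spectral projectors $\Pi_\lambda = \chi_A(]0,\lambda])$, are $\Gamma$-invariant. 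By the third bullet of Proposition~\ref{prop:density} the density function $D\nu_{\Pi_\lambda}$ is constant equal to the $\Gamma$-trace, so
\begin{displaymath}
  D(\Pi_\lambda) \;=\; \tau_\Gamma(\Pi_\lambda) \;=\; F_{\Gamma,k}(\lambda) \;\leq\; C\lambda^{\alpha/2}.
\end{displaymath}
Since $\dim H = \#\mathcal{F}^k < \infty$, the second bullet of Proposition~\ref{prop:density} then gives $\|\Pi_\lambda\|_{1,\infty} \leq D(\Pi_\lambda) \leq C\lambda^{\alpha/2}$, so $F(\lambda) = \|\Pi_\lambda\|_{1,\infty}$ has polynomial decay of exponent $\alpha/2 > 1$.

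Second, I would apply Theorem~\ref{thm:H-Sobolev}. With exponent $\beta = \alpha/2 > 1$, the integral $G(\lambda) = \int_0^\lambda dF(u)/u$ converges, and the discussion just below Theorem~\ref{thm:N-Sobolev} yields the Sobolev inequality
\begin{displaymath}
  \|\eta\|_{p_0} \;\leq\; C' \,\|A^{1/2} \eta\|_2, \qquad p_0 = \frac{2\beta}{\beta-1} = \frac{2\alpha}{\alpha-2},
\end{displaymath}
for all $\eta \in \ell^2 X^k \cap (\ker A)^\perp$. Since $\ker A = \ker d_k^* d_k = \ker d_k$ and $\|A^{1/2}\eta\|_2^2 = \langle d_k^* d_k \eta, \eta\rangle = \|d_k \eta\|_2^2$, this is exactly \eqref{eq:22} for the critical $p = p_0$. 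For larger $p$ (i.e.\ $1/p \le 1/2 - 1/\alpha$) the inequality follows from the embedding $\ell^{p_0} \hookrightarrow \ell^p$ on the discrete space $X^k$.

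Finally, I would deduce the inclusion \eqref{eq:21}. If $\omega \in \overline{d_k(\ell^2 X^k)}^{\ell^2}$, write $\omega = \lim d_k \eta_n$ with $\eta_n \in (\ker d_k)^\perp$. By \eqref{eq:22} the sequence $(\eta_n)$ is Cauchy in $\ell^p$, converging to some $\eta \in \ell^p X^k$. Because $X$ has bounded geometry (finite complex upstairs), $d_k$ is bounded on every $\ell^p$, so $d_k \eta_n \to d_k \eta$ in $\ell^p$, forcing $d_k \eta = \omega$ and giving \eqref{eq:21}.

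The only non-routine step is the translation from the $\Gamma$-spectral density $F_{\Gamma,k}$ to the ultracontractive spectral decay $\|\Pi_\lambda\|_{1,\infty}$ required by Theorem~\ref{thm:H-Sobolev}; this is precisely what Proposition~\ref{prop:density} is designed to supply in the invariant finite-fiber setting. Everything else is a direct specialization of the abstract Sobolev inequality together with the standard boundedness of the discrete coboundary on $\ell^p$.
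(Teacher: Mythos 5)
Your proposal is correct and follows essentially the same route as the paper: identify $\ell^2 X^k\cong\ell^2(\Gamma)\otimes H$ with $\dim H<\infty$, use Proposition~\ref{prop:density} to pass from the $\Gamma$-trace $F_{\Gamma,k}$ to the ultracontractive decay $\|\Pi_\lambda\|_{1,\infty}$ up to the factor $\dim H$, and then apply Theorem~\ref{thm:H-Sobolev} in the polynomial case with exponent $\alpha/2>1$, which gives exactly $1/p_0=1/2-1/\alpha$. The extra details you supply (the $\ell^{p_0}\hookrightarrow\ell^p$ embedding on the discrete space and the Cauchy-sequence argument deducing \eqref{eq:21} from \eqref{eq:22} via the $\ell^p$-boundedness of $d_k$) are exactly the routine steps the paper leaves implicit.
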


If moreover the reduced $\ell^2$-cohomology $\overline{H}^{k+1}_2(X)$
vanishes, this implies the vanishing of the $\ell^{p,2}$-torsion of
$X$, as stated in Corollary~\ref{cor:lp2}.

Other spectral decays than polynomial can be handled with
Theorem~\ref{thm:H-Sobolev}, leading then to a bounded inverse of
$d_k$ from $\im d_k \cap \ell^2$ into a more general Orlicz space
given by $H$.

The author thanks Pierre Pansu and Michel Ledoux for their comments on
this work.

\section{Proof of the pure state inequalities}
\label{sec:proofs-pure-state}

The first step towards Theorems~\ref{thm:H-Sobolev} to
\ref{thm:N-Sobolev} is to consider the ultracontrativity of the
auxiliary operators $A^{-1} \Pi_\lambda$ and $A^{-1}e^{-tA}\Pi_V$.
\begin{prop}
  \label{prop:2.1} $\bullet$ Let $A$, $F$ and $G$ be given as in
  Theorem~\ref{thm:H-Sobolev}. Then $A^{-1} \Pi_\lambda$ is
  ultracontractive with
  \begin{equation}
    \label{eq:23}
    \|A^{-1} \Pi_\lambda\|_{1, \infty}  \leq G(\lambda) =
    \int_0^\lambda \frac{dF(u)}{u} \,.
  \end{equation}
  $\bullet$ Let $A$, $L$ and $M$ be given as in
  Theorem~\ref{thm:N-Sobolev}.  Then $A^{-1} e^{-tA}\Pi_V$ is
  ultracontractive with
  \begin{equation}
    \label{eq:24}
    \|A^{-1} e^{-tA} \Pi_V\|_{1,\infty}  \leq M(t) = \int_t^{+\infty}
    L(s) ds\,.
  \end{equation}
\end{prop}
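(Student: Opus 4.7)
The plan is to treat both bounds by the same two-step scheme: first express the operators as Stieltjes or Bochner integrals, then transfer bounds from $\Pi_u$ (resp.\ $e^{-sA}\Pi_V$) to $A^{-1}\Pi_\lambda$ (resp.\ $A^{-1}e^{-tA}\Pi_V$) using \eqref{eq:2}. Since both operators are positive and self-adjoint, the identity \eqref{eq:2} yields, for any positive $Q = P^*P$,
\begin{equation*}
  \|Q\|_{1,\infty} = \sup_{\|f\|_1 \leq 1} \langle Qf, f\rangle,
\end{equation*}
so in both cases it suffices to bound the associated quadratic forms on $L^1 \cap L^2$.

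\textbf{First estimate \eqref{eq:23}.} For $f \in L^1 \cap L^2$, I would introduce the nondecreasing function $\mu_f(u) = \|\Pi_u f\|_2^2 = \langle \Pi_u f, f\rangle$. Spectral calculus gives
\begin{equation*}
  \langle A^{-1}\Pi_\lambda f, f\rangle = \int_0^\lambda \frac{d\mu_f(u)}{u}.
\end{equation*}
The key input from \eqref{eq:2} is $F(u) = \|\Pi_u\|_{1,\infty} = \|\Pi_u\|_{1,2}^2$, hence the pointwise bound $\mu_f(u) \leq F(u)\|f\|_1^2$. An integration by parts on $(\epsilon, \lambda]$ turns the Stieltjes integral into $\mu_f(\lambda)/\lambda - \mu_f(\epsilon)/\epsilon + \int_\epsilon^\lambda \mu_f(u)\,u^{-2}du$, and the same manipulation applied to $F$ gives $G(\lambda)$. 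A term-by-term comparison, after letting $\epsilon \to 0$, then produces $\langle A^{-1}\Pi_\lambda f, f\rangle \leq G(\lambda)\|f\|_1^2$, which is \eqref{eq:23}.

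\textbf{Second estimate \eqref{eq:24}.} The cleaner route is to use the spectral identity
\begin{equation*}
  A^{-1} e^{-tA}\Pi_V = \int_t^{+\infty} e^{-sA}\Pi_V\,ds,
\end{equation*}
valid on $V = (\ker A)^\perp$ since $\int_t^{+\infty} e^{-su}ds = e^{-tu}/u$ for $u > 0$. Pairing with $f, g \in L^1 \cap L^2$ and applying the triangle inequality under the Bochner integral gives
\begin{equation*}
  |\langle A^{-1}e^{-tA}\Pi_V f, g\rangle| \leq \int_t^{+\infty} \|e^{-sA}\Pi_V\|_{1,\infty}\,ds\,\|f\|_1\|g\|_1 = M(t)\|f\|_1\|g\|_1,
\end{equation*}
which is \eqref{eq:24}.

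\textbf{Main obstacle.} The only subtle point is the vanishing of the boundary term $\mu_f(\epsilon)/\epsilon$ as $\epsilon \to 0$ in the first estimate. This is guaranteed by the hypothesis that $G$ converges, because $F$ is nondecreasing and therefore $F(\epsilon)/\epsilon \leq \int_0^\epsilon dF(u)/u = G(\epsilon) \to 0$; the bound $\mu_f \leq F\|f\|_1^2$ then propagates this to $\mu_f(\epsilon)/\epsilon$ and also ensures convergence at $0$ of all Stieltjes integrals involved. Everything else is a routine passage to the limit.
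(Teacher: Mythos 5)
Your proof is correct and follows essentially the same route as the paper: the same integration by parts of the Stieltjes integral $\int_{]\varepsilon,\lambda]}u^{-1}d(\cdot)$, the same use of \eqref{eq:2} to transfer the bound $F(u)$, the observation $F(\varepsilon)/\varepsilon\leq G(\varepsilon)\to 0$ to handle the limit $\varepsilon\searrow 0$, and the identity $A^{-1}e^{-tA}\Pi_V=\int_t^{+\infty}e^{-sA}\Pi_V\,ds$ for the heat case. The only (cosmetic) difference is that you carry out the argument on the scalar quadratic forms $\mu_f(u)=\langle\Pi_u f,f\rangle$, where the boundary term $-\mu_f(\varepsilon)/\varepsilon$ can simply be dropped by sign, whereas the paper performs the identical manipulation at the operator level and passes to the limit via Beppo--Levi.
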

\begin{proof}
  $\bullet$ The spectral calculus gives
  \begin{equation}
    \label{eq:25}
    A^{-1} (\Pi_\lambda - \Pi_\varepsilon)  = \int_{]\varepsilon,
      \lambda]} u^{-1} d \Pi_u  = \lambda^{-1} \Pi_\lambda -
    \varepsilon^{-1} \Pi_\varepsilon
    + \int_{]\varepsilon, \lambda]}
    u^{-2}\Pi_u du\,,   
  \end{equation}
  thus taking norms, one obtains
  \begin{align*}
    \|A^{-1} (\Pi_\lambda - \Pi_\varepsilon)\|_{1,\infty} & \leq
    \lambda^{-1} F(\lambda) + \varepsilon^{-1} F(\varepsilon) +
    \int_{]\varepsilon, \lambda]}
    u^{-2} F(u) du \\
    & = G(\lambda) -G (\varepsilon) + 2 \varepsilon^{-1}
    F(\varepsilon) \,.
  \end{align*}
  Now by finiteness of $G$, one has $\|\Pi_\varepsilon / \varepsilon
  \|_{1, \infty} = F(\varepsilon)/ \varepsilon \leq G(\varepsilon)
  \rightarrow 0$ when $\varepsilon \searrow 0$, hence by \eqref{eq:2}
  \begin{align*}
    \|A^{-1}\Pi_\lambda\|_{1,\infty}
    & = \|\Pi_\lambda A^{-1/2}\Pi_\lambda\|_{1,2}^2 \\
    & = \lim_{\varepsilon \rightarrow 0} \| (\Pi_\lambda -
    \Pi_\varepsilon)A^{-1/2} \Pi_\lambda\|_{1,2}^2 \quad \textrm{by
      Beppo-Levi,}\\
    & = \lim_{\varepsilon \rightarrow 0} \|A^{-1} (\Pi_\lambda -
    \Pi_\varepsilon)\|_{1,\infty} \leq G(\lambda)\,.
  \end{align*}
  We note that we also have
  \begin{equation}
    \label{eq:26}
    G(\lambda) = \lambda^{-1} F(\lambda) + \int_0^\lambda u^{-2} F(u) du\,,
  \end{equation}
  which shows the useful monotonicity of the transform from $F$ to $G$
  and $H$.

  $\bullet$ The heat case \eqref{eq:24} is clear since $A^{-1} e^{-tA}
  \Pi_V = \int_t^{+\infty} e^{-sA}\Pi_V ds$ by the spectral calculus.
\end{proof}

The sequel of the proofs of Theorems~\ref{thm:H-Sobolev}
and~\ref{thm:N-Sobolev} relies on a classical technique from real
interpolation theory, as used for instance in the elementary proof of
the $L^2-L^p$ Sobolev inequality in $\R^n$ given by Chemin and Xu in
\cite{Chemin-Xu}. This consists here in estimating each level set
$\{x, |f(x)| > y\}$ by using an appropriate spectral splitting of $f
\in V$ into
\begin{equation}
  \label{eq:27}
  f= \chi_A(]0, \lambda]) f + \chi_A(]\lambda, +\infty[) f =
  \Pi_\lambda f + \Pi_{> \lambda}f\,. 
\end{equation}

\subsection{Proof of Theorem~\ref{thm:H-Sobolev}}
\label{sec:proof-H-Sobolev}
By \eqref{eq:2} and \eqref{eq:23} one has $\|A^{-1/2} \Pi_\lambda
\|_{2, \infty}^2 \leq G(\lambda)$, hence
\begin{equation}
  \label{eq:28}
  \|\Pi_\lambda f\|_\infty^2 \leq G(\lambda) \|A^{1/2} f\|^2_2 =
  G(\lambda) \mathcal{E}(f)\,. 
\end{equation}
Then suppose that $|f(x)| \geq y$, with $y^2 = 4 G(\lambda)
\mathcal{E}(f)$. As $|\Pi_\lambda f(x)| \leq y/2$ by \eqref{eq:28},
one has necessarily by \eqref{eq:27} that $|\Pi_{> \lambda} f(x)| \geq
y /2 \geq |\Pi_\lambda f(x)|$ and finally
\begin{equation}
  \label{eq:29}
  |f(x) |^2 \leq 4 |\Pi_{> \lambda} f(x)|^2 \quad \mathrm{on} \quad
  \bigl\{ x \in X \mid |f(x)|^2 \geq 4 G(\lambda)
  \mathcal{E}(f) \bigr\} \,. 
\end{equation}
Hence a first integration in $x$ gives,
\begin{displaymath}
  \int_{\{x \, , \, |f(x)|^2 \geq 4 \mathcal{E}(f) G(\lambda)\}}
  |f(x)|^2 d\mu \leq 4 \|\Pi_{> \lambda} f\|_2^2 \,,
\end{displaymath}
and a second integration in $\lambda$,
\begin{displaymath}
  \int_X \frac{|f(x)|^2}{4 \mathcal{E}(f)} G^{-1}\Bigl( \frac{|f(x)|^2}{4
    \mathcal{E}(f)} \Bigr) d \mu (x) \leq \int_0^{+\infty} \frac{\|\Pi_{> \lambda}
    f\|_2^2 }{  \mathcal{E}(f)} d \lambda \,, 
\end{displaymath}
where $G^{-1}(y) = \sup\{\lambda \mid G(\lambda) \leq y \}$. At last
the spectral calculus provides
\begin{align*}
  \int_0^{+\infty} \|\Pi_{> \lambda} f\|_2^2 \, d \lambda & =
  \int_0^{+\infty } \int_\lambda^{+\infty}
  \langle d \Pi_\mu f, f\rangle  \\
  & = \int_0^{+\infty} \mu \, \langle d \Pi_\mu f, f\rangle = \langle
  Af, f \rangle = \mathcal{E}(f) \,,
\end{align*}
proving Theorem~\ref{thm:H-Sobolev}.

\subsection{Proof of Theorem~\ref{thm:N-Sobolev}}
\label{sec:proof-N-Sobolev}
We follow the same lines as above. First by \eqref{eq:2} and
\eqref{eq:24} one has for $f\in V$
\begin{displaymath}
  \|e^{-tA/2} f\|_\infty \leq M(t) \mathcal{E}(f)\,,
\end{displaymath}
leading to
\begin{equation}
  \label{eq:30}
  |f(x) |^2 \leq 4 |(1- e^{-tA/2}) f(x)|^2 \quad \mathrm{on} \quad
  \bigl\{ x \in X \mid |f(x)|^2 \geq 4 M(t) \mathcal{E}(f)
  \bigr\}\,.
\end{equation}
Then integrations in $x$ and $dt/t^2$ give
\begin{displaymath}
  \int_X \frac{|f(x)|^2}{4 \mathcal{E}(f)} / M^{-1}\Bigl( \frac{|f(x)|^2}{4
    \mathcal{E}(f)} \Bigr) \, d \mu (x) \leq \frac{1}{\mathcal{E}(f)}
  \int_0^{+\infty} \|(1- 
  e^{-tA/2}) f\|_2^2 \, \frac{dt}{t^2} \,, 
\end{displaymath}
where now $M^{-1}(y)= \inf \{t \mid M(t) \leq y\}$ for the decreasing
$M$. The right integral is computed by spectral calculus
\begin{align*}
  \int_0^{+\infty} \|(1- e^{-tA/2}) f\|_2^2 \, \frac{dt}{t^2} & =
  \int_0^{+\infty} \int_0^{+\infty} (1- e^{-t\lambda/2})^2 \langle d
  \Pi_\lambda f, f
  \rangle \, \frac{dt}{t^2}  \\
  & = \int_0^{+\infty} \Bigl( \int_0^{+\infty} \frac{(1- e^{-u})^2 }{2
    u^2} du \Bigr) \lambda \langle d \Pi_\lambda f, f \rangle \\
  & = I \mathcal{E}(f) \,,
\end{align*}
where $\dsp 2 I = \int_0^{+\infty} \frac{(1- e^{-u})^2}{u^2}du = 2 \ln
2$ as seen developing $\dsp I_\varepsilon = \int_\varepsilon^{+\infty}
\frac{(1- e^{-u})^2}{u^2}du$ when $\varepsilon \searrow 0$.

\subsection{Related inequalities}
\label{sec:related-inequalities}

Using the same technique as above one can also show some generalised
Moser, Nash and Faber--Krahn inequalities for functions. The shape of
the Moser inequality resembles to the ``F-Sobolev'' inequality
introduced by Wang in~\cite{Wang} for some Schr\"odinger operators.
\begin{thm}
  \label{thm:Nash-Moser} Let $A$ be a positive self-adjoint operator
  on $(X, \mu)$. Suppose either
  \begin{itemize}
  \item $f$ is a non-zero function in $V= L^2(X)\cap (\ker A)^\bot$
    and $F(\lambda)$ denotes $\|\Pi_{]0, \lambda]}\|_{1, \infty}$ as
    above,
  \item or $f$ is any non-zero function in $L^2(X)$, and $F(\lambda) =
    \|\Pi_{[0, \lambda]}\|_{1, \infty}$\,.
  \end{itemize}
  $\bullet$ Then the following generalised $L^2$ Moser inequality
  holds
  \begin{equation}
    \label{eq:31}
    \int_X |f(x)|^2 F^{-1}\Bigl( \frac{|f(x)|^2}{4 \|f\|_2^2}\Bigr) d \mu\leq 4
    \mathcal{E}(f)\,, 
  \end{equation}
  and also
  \begin{equation}
    \label{eq:32}
    \int_X |f(x)|^2 F^{-1}\Bigl( \frac{|f(x)|}{2 \|f\|_1}\Bigr) d \mu\leq 4
    \mathcal{E}(f)\,. 
  \end{equation}
  $\bullet$ Both inequalities imply a Nash--type inequality (with
  weaker constants starting from \eqref{eq:31})
  \begin{equation}
    \label{eq:33}
    \|f\|_2^2 F^{-1} \Bigl(\frac{ \|f\|_2^2}{4 \|f\|_1^2}\Bigr) \leq
    8 \mathcal{E}(f)\,.
  \end{equation}
  $\bullet$ In particular if $f$ is supported in a domain $\Omega$ of
  finite measure and has finite energy, the following Faber--Krahn
  inequality, or ``uncertainty principle'', is satisfied
  \begin{equation}
    \label{eq:34}
    4 \mu(\Omega) F \Bigl(\frac{8
      \mathcal{E}(f)}{\|f\|_2^2}\Bigr) \geq 1\,.
  \end{equation}
\end{thm}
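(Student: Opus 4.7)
The plan is to run the same two-level spectral-cutoff argument as in the proof of Theorem~\ref{thm:H-Sobolev}, but replacing the ultracontractivity of $A^{-1}\Pi_\lambda$ by that of $\Pi_\lambda$ itself. From $F(\lambda) = \|\Pi_\lambda\|_{1,\infty}$ and identity \eqref{eq:2} applied to the self-adjoint idempotent $P = \Pi_\lambda$, one obtains $\|\Pi_\lambda\|_{2,\infty}^2 = \|\Pi_\lambda\|_{1,\infty} = F(\lambda)$, hence the two pointwise bounds $|\Pi_\lambda f(x)| \leq F(\lambda)^{1/2} \|f\|_2$ and $|\Pi_\lambda f(x)| \leq F(\lambda)\|f\|_1$.

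I would first split $f = \Pi_\lambda f + \Pi_{>\lambda} f$ as in \eqref{eq:27} (using $\Pi_{]0,\lambda]}$ for $f \in V$, or $\Pi_{[0,\lambda]}$ for general $f \in L^2$; in the second case the point $\{0\}$ contributes nothing to $\int_0^\infty \mu\, d\langle \Pi_\mu f, f\rangle = \mathcal{E}(f)$ and the argument is unchanged). On the set $\{|f(x)|^2 \geq 4F(\lambda)\|f\|_2^2\}$ the first pointwise bound forces $|\Pi_\lambda f(x)| \leq |f(x)|/2$, hence $|f(x)|^2 \leq 4|\Pi_{>\lambda} f(x)|^2$ there. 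Integrating in $x$ and then in $\lambda \in (0, +\infty)$, Fubini converts the $\lambda$-threshold into $F^{-1}(|f(x)|^2/(4\|f\|_2^2))$, while the $\lambda$-integral of $\|\Pi_{>\lambda} f\|_2^2$ equals $\mathcal{E}(f)$ by the spectral calculation already carried out in Section~\ref{sec:proof-H-Sobolev}. This yields \eqref{eq:31}; the parallel argument based on the threshold $|f(x)| \geq 2F(\lambda)\|f\|_1$ and the second pointwise bound yields \eqref{eq:32}.

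To pass from the Moser inequality \eqref{eq:32} to the Nash inequality \eqref{eq:33}, I would pick the cutoff $y_0 = \|f\|_2^2/(2\|f\|_1)$. On the sublevel set $\{|f| < y_0\}$ the trivial bound $|f|^2 \leq y_0 |f|$ integrates to at most $\|f\|_2^2/2$, so the complementary superlevel set carries at least half of the $L^2$-energy. Restricting \eqref{eq:32} to this superlevel set and using the monotonicity of $F^{-1}$ yields $F^{-1}(\|f\|_2^2/(4\|f\|_1^2)) \cdot \|f\|_2^2/2 \leq 4\mathcal{E}(f)$, which is \eqref{eq:33}. The Faber--Krahn inequality \eqref{eq:34} then follows in one line: when $f$ is supported in $\Omega$ of finite measure, Cauchy--Schwarz gives $\|f\|_2^2/(4\|f\|_1^2) \geq 1/(4\mu(\Omega))$, and combining this with \eqref{eq:33} and applying $F$ (using right-continuity and monotonicity) produces the stated inequality.

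No step is really an obstacle: the whole proof is a transparent adaptation of the scheme of Theorem~\ref{thm:H-Sobolev}. The one genuinely non-mechanical choice is the threshold $y_0 = \|f\|_2^2/(2\|f\|_1)$ in the Nash step, selected so that the Markov cut absorbs a clean half of the $L^2$-mass and yields the constant $8$. Deriving \eqref{eq:33} directly from \eqref{eq:31} would proceed similarly, but with worse constants, which is precisely the ``weaker constants'' parenthetical in the statement.
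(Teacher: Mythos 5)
Your proof is correct, and for the Moser inequalities \eqref{eq:31}--\eqref{eq:32} and the Faber--Krahn step \eqref{eq:34} it coincides with the paper's argument: the same splitting $f=\Pi_\lambda f+\Pi_{>\lambda}f$, the same pointwise bounds $\|\Pi_\lambda f\|_\infty^2\leq F(\lambda)\|f\|_2^2$ and $\|\Pi_\lambda f\|_\infty\leq F(\lambda)\|f\|_1$ coming from \eqref{eq:2}, and the same double integration with $\int_0^{+\infty}\|\Pi_{>\lambda}f\|_2^2\,d\lambda=\mathcal{E}(f)$. The one place you diverge is the passage from \eqref{eq:32} to the Nash inequality \eqref{eq:33}: the paper follows Coulhon--Grigor'yan--Levin and uses the Young-type duality $st\leq sF(s)+tF^{-1}(t)$ with $t=|f(x)|/(2\|f\|_1)$, integrates against $|f|\,d\mu$, and then lets $s\nearrow F^{-1}(\|f\|_2^2/(4\|f\|_1^2))$; you instead truncate at the level $y_0=\|f\|_2^2/(2\|f\|_1)$, observe that the superlevel set carries at least half the $L^2$-mass, and use monotonicity of $F^{-1}$. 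The two mechanisms are of comparable difficulty and give the identical constant $8$; yours is arguably slightly more elementary (no duality inequality needed), while the paper's version is the one that generalises directly to the mixed-state setting of Proposition~\ref{prop:Sobolev-Faber-Krahn}, where the same $st\leq sG(s)+tG^{-1}(t)$ trick is reused. Your remark that deriving \eqref{eq:33} from \eqref{eq:31} instead would cost constants also matches the paper, which carries this out with $st\leq s\sqrt{F(s)}+tF^{-1}(t^2)$.
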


Here one compares levels of $f$ to $\|f\|_2$ or $\|f\|_1$ instead of
$\mathcal{E}(f)$. This does not rely on Proposition~\ref{prop:2.1},
and one can work either with $f \in (\ker A)^\bot$ and $F(\lambda)= \|
\Pi_{]0,\lambda]} \|_{1,\infty}$, as before, or with a general $f \in
L^2(X)$ and $F(\lambda) = \|\Pi_{[0,\lambda]}\|_{1,\infty}$.  In any
case, starting from \eqref{eq:27} one gets
\begin{equation}
  \label{eq:35}
  \begin{gathered}
    |f(x) |^2 \leq 4 |\Pi_{> \lambda} f(x)|^2 \quad \mathrm{on} \\
    \bigl\{ x \in X \mid |f(x)|^2 \geq 4 F(\lambda) \|f\|_2^2 \bigr\}
    \quad \mathrm{or} \quad \bigl\{ x \in X \mid |f(x)| \geq 2
    F(\lambda) \|f\|_1 \bigr\} \,.
  \end{gathered}
\end{equation}
This yields the generalised Moser inequalities \eqref{eq:31} and
\eqref{eq:32} by integrations as in Theorem~\ref{thm:H-Sobolev}.

Note that in the case one works without restriction on $f$ and
$F(\lambda) = \|\Pi_{[0, \lambda]}\|_{1,+\infty}$, one has to complete
the definition of $F^{-1}$ by setting
\begin{equation}
  \label{eq:36}
  F^{-1}(y) = 
  \begin{cases}
    0 & \mathrm{if} \quad  y < F(0) \,,\\
    \sup \{\lambda \mid F(\lambda) \leq y \} & \mathrm{elsewhere}\,.
  \end{cases}
\end{equation}
This means that the inequalities \eqref{eq:31} and \eqref{eq:32} cut
off small values of $f$ in that case.

To deduce the Nash--type inequality \eqref{eq:33}, we argue as in
\cite[p. 97]{Coulhon-Grigorian-Levin}. Observe that for all
non-negative $s$ and $t$ one has
\begin{equation}
  \label{eq:37}
  st  \leq s F(s) + t F^{-1}(t)\,.
\end{equation}
Applying to $t= \frac{|f(x)|}{2 \|f\|_1}$ gives
\begin{displaymath}
  s \frac{|f(x)|}{2 \|f\|_1} - s F(s) \leq \frac{|f(x)|}{2 \|f\|_1}
  F^{-1}\Bigl( \frac{|f(x)|}{2 \|f\|_1}\Bigr) \,.
\end{displaymath}
By integration against the measure $|f(x)| d \mu$ and using
\eqref{eq:32}, this yields
\begin{displaymath}
  s \frac{\|f\|_2^2}{2\|f\|_1} - s F(s) \|f\|_1 \leq \int_X
  \frac{|f(x)|^2}{2 \|f\|_1} F^{-1}\Bigl( \frac{|f(x)|}{2 \|f\|_1}
  \Bigr) d \mu \leq \frac{2\mathcal{E}(f)}{\|f\|_1}\,.
\end{displaymath}
This provides \eqref{eq:33} using
\begin{displaymath}
  s \nearrow F^{-1}\Bigl(\frac{\|f\|_2^2}{4\|f\|_1^2} \Bigr) =
  \sup \Bigl\{s \mid F(s) \leq \frac{
    \|f\|_2^2}{4\|f\|_1^2} \Bigr\} \,.
\end{displaymath}

One can proceed similarly starting from the $L^2$ Nash--type
inequality \eqref{eq:31} instead of \eqref{eq:32}. One replaces
\eqref{eq:37} by
\begin{displaymath}
  st \leq s \sqrt{ F (s)} + t F^{-1}(t^2) 
\end{displaymath}
with $t = \frac{|f(x)|}{2 \|f\|_2}$. Integrating against $|f(x)| d
\mu$ and using $s \nearrow F^{-1}(\frac{\varepsilon^2
  \|f\|_2^2}{\|f\|_1^2})$ yields
\begin{displaymath}
  (\frac{1}{2} - \varepsilon)\|f\|_2^2  F^{-1}\Bigl(\frac{\varepsilon^2
    \|f\|_2^2}{\|f\|_1^2} \Bigr) \leq 2\mathcal{E}(f)\,,  
\end{displaymath}
which is similar to \eqref{eq:33}, but with weaker constants.

\smallskip

When is $f$ is supported in a domain $\Omega$ of finite measure, one
has $\|f\|_1^2 \leq \mu(\Omega) \|f\|_2^2$, and thus \eqref{eq:33}
implies that
\begin{displaymath}
  F^{-1} \Bigl( \frac{1}{4\mu
    (\Omega)} \Bigr) \leq \frac{8\mathcal{E}(f)}{\|f\|_2^2}\,.  
\end{displaymath}
If $\mathcal{E}(f)$ is finite, this leads to the Faber--Krahn
inequality \eqref{eq:34} since by right continuity of $F$ and
\eqref{eq:36}, one has $F(F^{-1}(\lambda)) \geq \lambda$ when
$F^{-1}(\lambda)$ is finite.

\subsection{Remark}
\label{sec:remark-1}
In the previous proofs, it appears clearly that the proposed controls
of ultracontractive norms of spectral or heat decay are much stronger
than the Sobolev or Nash-Moser inequalities deduced. Indeed these
inequalities are twice integrated versions, in space and frequency, of
the ``local'' inequalities \eqref{eq:29}, \eqref{eq:30} and
\eqref{eq:35}, that come directly from the ultracontractive controls.
Therefore it seems hopeless to get the converse statements in general.

However, we recall that one can get back from Sobolev or Nash
inequalities to the heat decay, in the case the heat is
\emph{equicontinuous} on $L^1$ or $L^\infty$. This is due to
Varopoulos in \cite{Varopoulos} for the polynomial case, and Coulhon
in \cite{Coulhon2} for more general decays.  This strong
equicontinuity hypothesis holds for the Laplacian on scalar functions,
as comes for instance from the maximum principle, but unfortunately
only in a positive curvature setting for Hodge-de Rham Laplacians
acting on forms of higher degree.

\section{Proof of the mixed state inequalities}
\label{sec:proof-mixed-state}

\subsection{Proof of $\rho$-Sobolev}
\label{sec:proof-rho-Sobolev}

The proof of the mixed state version of Sobolev inequality follows the
same lines as the pure state one.

The first step is adapted to use the density of operators, as given in
Definition~\ref{def:P-measure}, instead of their ultracontractive
norm.
\begin{prop}
  \label{prop:rho-G} $\bullet$ Let $A$, $F$ and $G$ be given as in
  Theorem~\ref{thm:rho-Sobolev}. Then $A^{-1} \Pi_\lambda$ has a
  finite density and
  \begin{equation}
    \label{eq:38}
    D(A^{-1} \Pi_\lambda)  \leq G(\lambda) =
    \int_0^\lambda \frac{dF(u)}{u} \,.
  \end{equation}
  $\bullet$ Suppose $L(t)= D(e^{-tA})$ and $M(t) = \int_s^{+\infty}
  L(s) ds$ are finite.  Then $A^{-1} e^{-tA}\Pi_V$ has finite density
  with
  \begin{equation}
    \label{eq:39}
    D(A^{-1} e^{-tA} )  \leq M(t) = \int_t^{+\infty}
    L(s) ds\,.
  \end{equation}
\end{prop}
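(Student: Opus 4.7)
The plan is to reproduce the proof of Proposition~\ref{prop:2.1} verbatim, but with the functional $P \mapsto \nu_P(\Omega)$ playing the role of the $(1,\infty)$-norm. The key observation, which in fact simplifies matters, is the cyclic identity
\[
\nu_P(\Omega) = \tau(P^{1/2}\chi_\Omega P^{1/2}) = \tau(\chi_\Omega P \chi_\Omega),
\]
which shows that on positive operators $\nu_\cdot(\Omega)$ is a positive linear functional, order-preserving, and normal: if $P_n \nearrow P$ strongly then $\chi_\Omega P_n \chi_\Omega \nearrow \chi_\Omega P \chi_\Omega$, and monotone continuity of $\tau$ gives $\nu_{P_n}(\Omega) \nearrow \nu_P(\Omega)$. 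In particular, $\nu_\cdot(\Omega)$ commutes with spectral and Bochner integrals of positive operators, which is exactly what is needed to replace the Beppo--Levi passage from $(1,2)$ to $(1,\infty)$ used in the pure-state proof.

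For the spectral bound~\eqref{eq:38}, I would start from the same integration by parts as in~\eqref{eq:25}:
\[
A^{-1}(\Pi_\lambda - \Pi_\varepsilon) = \lambda^{-1}\Pi_\lambda - \varepsilon^{-1}\Pi_\varepsilon + \int_{]\varepsilon,\lambda]} u^{-2}\Pi_u\,du.
\]
Applying $\nu_\cdot(\Omega)$ termwise, dropping the nonpositive term $-\varepsilon^{-1}\nu_{\Pi_\varepsilon}(\Omega)$, and bounding $\nu_{\Pi_u}(\Omega) \leq F(u)\mu(\Omega)$ via the definition of $F = D(\Pi_u)$ yields
\[
\nu_{A^{-1}(\Pi_\lambda - \Pi_\varepsilon)}(\Omega) \leq \Bigl( \lambda^{-1}F(\lambda) + \int_\varepsilon^\lambda u^{-2}F(u)\,du \Bigr)\mu(\Omega).
\]
Letting $\varepsilon \searrow 0$, monotone convergence on the left-hand side and identity~\eqref{eq:26} on the right give $\nu_{A^{-1}\Pi_\lambda}(\Omega) \leq G(\lambda)\mu(\Omega)$, and taking the supremum over $\Omega$ delivers \eqref{eq:38}.

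The heat version~\eqref{eq:39} is then immediate: by spectral calculus $A^{-1}e^{-tA}\Pi_V = \int_t^{+\infty} e^{-sA}\Pi_V\,ds$ is an increasing limit of positive Riemann sums, so $\nu_\cdot(\Omega)$ passes under the integral and
\[
\nu_{A^{-1}e^{-tA}\Pi_V}(\Omega) = \int_t^{+\infty} \nu_{e^{-sA}\Pi_V}(\Omega)\,ds \leq \int_t^{+\infty} L(s)\mu(\Omega)\,ds = M(t)\mu(\Omega).
\]

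The only mildly delicate step is the justification that $\nu_\cdot(\Omega)$ commutes with these operator-valued integrals, which as noted reduces to normality of the trace. No other genuinely new ingredient is required beyond Proposition~\ref{prop:2.1}: the transition from the $(1,\infty)$ norm to the density $D$ is tailor-made because $D$ is controlled by the positive linear set-functional $\nu_\cdot(\Omega)$, whereas $\|\cdot\|_{1,\infty}$ is not linear in the operator and therefore demanded the auxiliary $(1,2)$-factorisation.
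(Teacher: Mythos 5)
Your proposal is correct and follows essentially the same route as the paper: the same integration by parts \eqref{eq:25}, the bound $\nu_{\Pi_u}(\Omega)\leq \mu(\Omega)F(u)$, a Beppo--Levi/monotone-convergence passage to the limit $\varepsilon\searrow 0$ (the paper carries this out via a Hilbert basis, which is the same normality argument you invoke), and the identity $A^{-1}e^{-tA}\Pi_V=\int_t^{+\infty}e^{-sA}\Pi_V\,ds$ for the heat case. The only cosmetic difference is that you absorb the boundary term via \eqref{eq:26} by extending the integral to $0$, where the paper keeps $F(\varepsilon)/\varepsilon\leq G(\varepsilon)\to 0$; both are equally valid.
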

\begin{proof}
  We follow the proof of Proposition~\ref{prop:2.1}. First by
  \eqref{eq:25} and linearity of trace one has
  \begin{align*}
    \nu_{A^{-1} \Pi_{]\varepsilon,\lambda]}} (\Omega) & =
    \tau(\chi_\Omega A^{-1} \Pi_{]\varepsilon,\lambda]} \chi_\Omega )
    \\
    & = \lambda^{-1} \nu_{\Pi_\lambda}(\Omega) - \varepsilon^{-1}
    \nu_{\Pi_\varepsilon}(\Omega) + \int_{]\varepsilon,\lambda]}
    u^{-2} \nu_{\Pi_\varepsilon}(\Omega) du \\
    & \leq \mu(\Omega) \bigl(\lambda^{-1} F(\lambda) +
    \int_{]\varepsilon,\lambda]} u^{-2} F(u) du \bigr) \\
    & = \mu(\Omega) ( G(\lambda) + F(\varepsilon)/\varepsilon \bigl)
    \longrightarrow \mu(\Omega) G(\lambda) \,,
  \end{align*}
  when $\varepsilon \searrow 0$ since $F(\varepsilon)/\varepsilon \leq
  G(\varepsilon) \rightarrow 0$. Now using an Hilbert basis $f_n$ of
  $\mathcal{H}$, one sees by Beppo-Levi and the spectral theorem that
  \begin{align*}
    \tau(\chi_\Omega A^{-1} \Pi_{]\varepsilon,\lambda]} \chi_\Omega )
    & = \sum_i \|A^{-1/2} \Pi_{]\varepsilon,\lambda]} \chi_\Omega
    f_i\|^2_2 \\
    & \longrightarrow \sum_i \|A^{-1/2} \Pi_\lambda \chi_\Omega
    f_i\|^2_2  \quad \mathrm{when}\quad \varepsilon \searrow 0 \,,\\
    & = \tau(\chi_\Omega A^{-1} \Pi_\lambda \chi_\Omega ) =
    \nu_{A^{-1} \Pi_\lambda}(\Omega) \,.
  \end{align*}
  Therefore we obtain that $\nu_{A^{-1} \Pi_\lambda}(\Omega) \leq
  \mu(\Omega) G(\lambda)$ yielding $D(A^{-1} \Pi_\lambda) \leq
  G(\lambda)$ as claimed.
  
  \smallskip The proof at the heat level also follows
  Proposition~\ref{prop:2.1} and starts from
  \begin{displaymath}
    \nu_{A^{-1} e^{-tA}}(\Omega) = \int_s^{+\infty} \nu_{e^{-sA}}
    (\Omega) ds \,.
  \end{displaymath}
\end{proof}

\begin{rem}
  \label{rem:G=density}
  In these proofs, we note that for invariant operators on groups
  $\Gamma$, endowed with their Haar measure, the previous inequalities
  \eqref{eq:38} and \eqref{eq:39} becomes equalities, as due to
  $\nu_P(\Omega) = D(P) \mu(\Omega)$ in such cases by
  Proposition~\ref{prop:density}.
\end{rem}

The sequel of the proof also follows the pure state case. Let
$\rho^{1/2}$ be the positive square root of $\rho$ and consider for a
measurable $\Omega \subset X $ the following
splitting 
\begin{displaymath}
  \rho^{1/2} \chi_\Omega = \rho^{1/2} A^{1/2} A^{-1/2} \Pi_\lambda \chi_\Omega
  +   \rho^{1/2} \Pi_{>\lambda} \chi_\Omega\,.
\end{displaymath}
Taking Hilbert--Schmidt norm gives
\begin{displaymath}
  \|\rho^{1/2} \chi_\Omega\|_{HS} \leq \|\rho^{1/2} A^{1/2} \|_{2,2}
  \|A^{-1/2} \Pi_\lambda \chi_\Omega \|_{HS}
  +   \|\rho^{1/2} \Pi_{>\lambda} \chi_\Omega\|_{HS} \,.
\end{displaymath}
Then using the following classical properties, see
e.g. \cite[Chap. VI]{Reed-Simon},
\begin{equation}
  \label{eq:40}
  \tau(P^* P) = \|P\|_{HS}^2 = \|P^*\|_{HS}^2 \quad \mathrm{and} \quad
  \|P\|_{2,2}^2 = 
  \|P^*\|_{2,2}^2 = \|P^*P\|_{2,2} \,,
\end{equation}
one finds that for any $\Omega \subset X$
\begin{displaymath}
  \nu_\rho(\Omega) = \|\rho^{1/2} \chi_\Omega\|_{HS}^2 \leq 2
  \|\rho^{1/2} A \rho^{1/2} \|_{2,2} \nu_{A^{-1} \Pi_\lambda} (\Omega) + 2
  \nu_{\Pi_{> \lambda} \rho\Pi_{> \lambda}}( \Omega)\,.
\end{displaymath}
Therefore taking densities gives
\begin{align*}
  D \nu_\rho(x) & \leq 2 \|\rho^{1/2} A \rho^{1/2}\|_{2,2} D
  \nu_{A^{-1}
    \Pi_\lambda}(x) + 2 D \nu_{\Pi_{> \lambda} \rho \Pi_{> \lambda}} (x) \\
  & \leq 2 \|\rho^{1/2} A \rho^{1/2} \|_{2,2} G(\lambda) + 2 D
  \nu_{\Pi_{> \lambda} \rho \Pi_{>\lambda}} (x) \,,
\end{align*}
and finally
\begin{equation}
  \label{eq:41}
  D \nu_\rho(x) \leq 4 D \nu_{\Pi_{> \lambda} \rho \Pi_{> \lambda}}
  (x)  \quad
  \mathrm{if } \quad D \nu_\rho(x) \geq 4  \|\rho^{1/2} A \rho^{1/2}
  \|_{2,2} G(\lambda)   \,.
\end{equation}
By integration on $A = \{ (x,\lambda) \mid D \nu_\rho(x) \geq 4
\|\rho^{1/2} A \rho^{1/2} \|_{2,2} G(\lambda) \}$ this leads to
\begin{align*}
  \int_X G^{-1} \Bigl( \frac{D \nu_\rho(x)}{4 \|\rho^{1/2} A
    \rho^{1/2} \|_{2,2}} \Bigr) d \nu_\rho & = \int_A D \nu_\rho(x) d
  \mu(x)
  d\lambda \quad\mathrm{by\ Fubini},  \\
  & \leq 4 \int_A D \nu_{\Pi_{> \lambda} \rho \Pi_{> \lambda}}
  (x)  d\mu(x) d \lambda \quad \mathrm{by}\quad \eqref{eq:41},\\
  & \leq 4 \int_{X \times \R^+} D \nu_{\Pi_{> \lambda} \rho \Pi_{>
      \lambda}}
  (x)  d\mu(x) d \lambda \\
  & = 4 \int_{\R^+} \tau(\Pi_{> \lambda } \rho \Pi_{> \lambda}) d
  \lambda
  \\
  & = 4 \int_{\R^+} \tau ( \rho^{1/2} \Pi_{> \lambda} \rho^{1/2})
  d\lambda
  \quad \mathrm{by}\quad \eqref{eq:40}\,, \\
  & = 4 \tau \Bigl( \rho^{1/2} \bigl(\int_{\R^+} \Pi_{>\lambda}
  d\lambda \bigr) \rho^{1/2} \Bigr)
  \\
  & = 4 \tau( \rho^{1/2}A \rho^{1/2}) = 4 \mathcal{E}(\rho)\,.
\end{align*}

\subsection{Proof of the mixed state Moser inequalities.}

\label{sec:proof-rho-Moser}
The proofs of Theorem~\ref{thm:F-Sobolev} and
\ref{thm:rho-Moser-partition} follow the same technique. Here one
compares the level sets of $D \nu_\rho(x)$ to $\|\rho\|_{2,2}$ instead
of $\|\rho^{1/2} A \rho^{1/2} \|_{2,2}$ in \eqref{eq:41}. This does
not rely on Proposition~\ref{prop:rho-G} and one can work either with
$\rho= 0$ on $\ker A$ and $\Pi_{]0, \lambda]}$, as before, but also
with a general positive $\rho$ and $\Pi_{[0, \lambda]} $.

In any case, given $\Omega \subset X$, one considers the splitting
\begin{displaymath}
  \rho^{1/2}  \chi_\Omega = \rho^{1/2} \Pi_\lambda \chi_\Omega
  +   \rho^{1/2} \Pi_{>\lambda} \chi_\Omega \,.
\end{displaymath}
Taking Hilbert--Schmidt norms yields
\begin{displaymath}
  \| \rho^{1/2} \chi_\Omega \|_{HS} \leq \| \rho^{1/2} \|_{2,2}
  \|\Pi_\lambda \chi_\Omega \|_{HS} +
  \|\rho^{1/2} \Pi_{>\lambda} \chi_\Omega \|_{HS}  \,,
\end{displaymath}
and using \eqref{eq:40} as above
\begin{equation}
  \label{eq:42}
  \nu_{\rho} (\Omega) = \|\rho^{1/2} \chi_\Omega
  \|_{HS}^2  \leq 2 \|\rho\|_{2,2} \nu_{\Pi_\lambda} (\Omega) + 2 \nu_{\Pi_{>
      \lambda} \rho \Pi_{> \lambda}} (\Omega) \,,
\end{equation}
whence
\begin{align*}
  D \nu_\rho(x) & \leq 2 \|\rho\|_{2,2} D \nu_{\Pi_\lambda}(x)
  + 2 D \nu_{\Pi_{> \lambda} \rho \Pi_{> \lambda}} (x) \\
  & = 2 \|\rho\|_{2,2} F_x(\lambda) + 2 D \nu_{\Pi_{> \lambda} \rho
    \Pi_{>\lambda}} (x) \,.
\end{align*}
Thus in place of \eqref{eq:41} one finds that
\begin{equation}
  \label{eq:43}
  D \nu_\rho(x) \leq 4 D \nu_{\Pi_{> \lambda} \rho \Pi_{> \lambda}}
  (x)  \quad
  \mathrm{if } \quad D \nu_\rho(x) \geq 4  \|\rho
  \|_{2,2} F_x(\lambda)   \,.
\end{equation}
This leads to \eqref{eq:16} by integration on $B = \{ (x,\lambda) \mid
D \nu_\rho(x) \geq 4 \|\rho \|_{2,2} F_x(\lambda) \}$ as done
above. Note that when one works on general $\rho$ and
$\Pi_{[0,\lambda]}$ one has to complete the definition of $F_x^{-1}$
by setting
\begin{equation}
  \label{eq:44}
  F_x^{-1}(u) = 
  \begin{cases}
    0 & \mathrm{if} \quad  u < F_x(0) = D \nu_{\Pi_{\ker A}} (x)\,,\\
    \sup \{\lambda \mid F_x(\lambda) \leq u \} & \mathrm{elsewhere}\,.
  \end{cases}
\end{equation}
This means that the inequality \eqref{eq:16} cuts off small values of
$D \nu_\rho$ in this setting.

\medskip

$\bullet$ For the discrete Moser inequality, \eqref{eq:42} provides
\begin{equation}
  \label{eq:45}
  \nu_\rho(\Omega) \leq 4 \nu_{\Pi_{>\lambda} \rho \Pi_{> \lambda}} (\Omega) \quad
  \mathrm{if } \quad \nu_\rho(\Omega) \geq 4 \| \rho\|_{2,2}
  F_\Omega(\lambda) \,,
\end{equation}
in place of the local \eqref{eq:43}. Given a partition $X = \bigsqcup
\Omega_i$, and summing \eqref{eq:45} on
\begin{displaymath}
  C= \{ (i, \lambda )\mid
  \nu_\rho(\Omega_i) \geq 4 \| \rho\|_{2,2} F_{\Omega_i}(\lambda) \}
  \subset I \times \R^+
\end{displaymath}
leads now to \eqref{eq:17}, where again one defines in general
\begin{equation}
  \label{eq:46}
  F^{-1}_\Omega (u) = 
  \begin{cases}
    0 & \mathrm{if} \quad  u < F_\Omega(0) = \nu_{\ker A} (\Omega) \,,\\
    \sup \{\lambda \mid F_\Omega(\lambda) \leq u \} &
    \mathrm{elsewhere}\,.
  \end{cases}
\end{equation}

\smallskip

$\bullet$ When the density $\rho$ is supported in $\Omega$, one has
$\nu_\rho(\Omega) = \nu_\rho(X) = \tau (\rho)$ and the Moser
inequality \eqref{eq:17} writes
\begin{equation}
  \label{eq:47}
  F^{-1}_{\Omega} \Bigl( \frac{\tau(\rho)}{4 \|\rho\|_{2,2}} \Bigr)
  \leq 4 \frac{\mathcal{E}(\rho)}{\tau (\rho)} = 4 \langle A\rangle_\rho .
\end{equation}
This yields the Faber--Krahn inequality \eqref{eq:18} when $\langle
A\rangle_\rho $ is finite since by right continuity of $F_\Omega$ and
\eqref{eq:46}, one has $F_\Omega (F^{-1}_\Omega(\lambda) )\geq
\lambda$ when $F_\Omega^{-1}(\lambda)$ is finite.

\subsection{Note on constants}
\label{sec:note-constants}

We observe that one can balance differently the multiplicative
constants in the proofs of inequalities. Given $\varepsilon \in
]0,1[$, one can replace for instance \eqref{eq:45} by
\begin{displaymath}
  \varepsilon^2  \nu_\rho(\Omega) \leq \nu_{\Pi_{>\lambda} \rho \Pi_{>
      \lambda}} (\Omega) \quad 
  \mathrm{if }\quad  (1- \varepsilon)^2 \nu_\rho(\Omega) \geq  \|
  \rho\|_{2,2} 
  F_\Omega(\lambda) \,,
\end{displaymath}
yielding
\begin{displaymath}
  \sum_i F_{\Omega_i}^{-1} \Bigl( \frac{(1- \varepsilon)^2 \nu_\rho
    (\Omega_i)}{ 
    \|\rho\|_{2,2}} \Bigr) \nu_\rho (\Omega_i) \leq \varepsilon^{-2}
  \mathcal{E}(\rho) 
\end{displaymath}
instead of \eqref{eq:17}, and in particular to the Faber--Krahn
inequality
\begin{equation}
  \label{eq:48}
  (1-\varepsilon)^2 F_\Omega^{\dim} (\lambda) \leq F_\Omega
  (\varepsilon^{-2} \lambda) \leq \mu(\Omega) F
  (\varepsilon^{-2} \lambda)
\end{equation}
in place of \eqref{eq:19}. With respect to the ideal bound
\eqref{eq:20}, one sees in \eqref{eq:48} that tightening the energy
multiplicative constant to $1$ blows up the volume one, and reversely.

\section{Relationships between inequalities}
\label{sec:relat-betw-ineq}

\subsection{Spectral versus heat decay}
\label{sec:spectral-heat-decay}

We now compare the various results obtained.

We first consider the two Theorems~\ref{thm:H-Sobolev}
and~\ref{thm:N-Sobolev}. They both state Sobolev inequalities for
functions starting either from the heat or spectral decay. One can
compare $F$ and $G$ to $L$ and $M$ through Laplace transform of
associated measures.
\begin{prop}
  \label{prop:3:2}
  $\bullet $ In any case it holds that
  \begin{gather}
    \label{eq:49}
    L(t) \leq \mathcal{L}(dF)(t) = \int_0^{+\infty} e^{-\lambda
      t} d F(\lambda) \\
    \label{eq:50}
    M(t) \leq \mathcal{L}(dG)(t) = \int_0^{+\infty} e^{-\lambda t} d
    G(\lambda)\,.
  \end{gather}
  
  $\bullet$ If $A$ is an invariant operator acting on $\mathcal{H} =
  L^2(\Gamma, H)$ over a locally compact group $\Gamma$, then reverse
  inequalities hold up to the multiplicative factor $n = \dim H$, i.e.
  \begin{displaymath}
    \mathcal{L}(dF) \leq n L \quad \mathrm{and} \quad \mathcal{L}(dG)
    \leq n M\,.
  \end{displaymath}
  Moreover $G(y) \leq n e M(y^{-1})$ and H-Sobolev inequality
  \eqref{eq:3} implies N-Sobolev \eqref{eq:9}, up to multiplicative
  constants.

  $\bullet$ Reversely, for any operator, if $G$ satisfies the
  exponential growing condition :
  \begin{equation}
    \label{eq:51}
    \exists C \ \mathrm{such\ that\
    }\forall u, y > 0\,,\  G(uy) \leq
    e^{C u} G(y) \,,
  \end{equation}
  then $ M(y^{-1}) \leq 3 G(2Cy)$. Hence $H$ and $N$-Sobolev are
  equivalent on groups in that case.
\end{prop}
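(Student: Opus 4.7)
My plan follows the three bullets, each time exploiting the spectral representation $e^{-tA}\Pi_V=\int_0^{+\infty}e^{-\lambda t}\,d\Pi_\lambda$ at a different level: the $\|\cdot\|_{1,\infty}$-norm, the density $D(\cdot)$, and the $\Gamma$-trace.

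For the first bullet, since $\|\cdot\|_{1,\infty}$ is monotone and subadditive on positive operator-valued measures, applying it to the spectral resolution gives $L(t)\leq \int_0^{+\infty}e^{-\lambda t}\,dF(\lambda)=\mathcal{L}(dF)(t)$. To get the bound on $M$, I would integrate this from $t$ to $+\infty$ and swap orders via Fubini; the inner integral $\int_t^{+\infty}e^{-\lambda s}\,ds=e^{-\lambda t}/\lambda$ combines with the identity $dG(\lambda)=dF(\lambda)/\lambda$ to yield $M(t)\leq \mathcal{L}(dG)(t)$.

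For the second bullet, both $\Pi_\lambda$ and $e^{-tA}\Pi_V$ are $\Gamma$-invariant, and Proposition~\ref{prop:density} identifies their densities with $\Gamma$-traces. Applying the trace to the spectral resolution produces the \emph{equality} $D(e^{-tA}\Pi_V)=\int_0^{+\infty}e^{-\lambda t}\,dD(\Pi_\lambda)$; together with the two inequalities $D(\Pi_\lambda)\geq F(\lambda)$ and $D(e^{-tA}\Pi_V)\leq n\,L(t)$ coming from \eqref{eq:7}, this gives $\mathcal{L}(dF)\leq nL$, and the argument for $M,G$ is the same. The bound $G(y)\leq en\,M(y^{-1})$ then follows from restricting $\mathcal{L}(dG)(t)=\int_0^{+\infty}e^{-\lambda t}\,dG(\lambda)$ to $\lambda\leq 1/t$, where $e^{-\lambda t}\geq e^{-1}$, hence $G(1/t)\leq e\,\mathcal{L}(dG)(t)\leq en\,M(t)$. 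To deduce $N$-Sobolev from $H$-Sobolev, I would translate $G(y)\leq en\,M(y^{-1})$ to the pointwise bound $en\,N(y)\leq H(en\,y)$ between the Orlicz integrands $H(y)=yG^{-1}(y)$ and $N(y)=y/M^{-1}(y)$; substituted into Theorem~\ref{thm:H-Sobolev} with argument $|f|^2/(4en\,\mathcal{E}(f))$, this produces $N$-Sobolev up to replacing $\mathcal{E}(f)$ by $en\,\mathcal{E}(f)$.

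For the third bullet, I would integrate $\mathcal{L}(dG)(t)$ by parts (using $G(0^+)=0$) to rewrite it as $t\int_0^{+\infty}e^{-\lambda t}G(\lambda)\,d\lambda$, substitute $u=\lambda t$, and decompose $\lambda=(u/(2C))(2C/t)$ so that \eqref{eq:51} yields $G(\lambda)\leq e^{u/2}G(2C/t)$; the residual $\int_0^{+\infty}e^{-u/2}\,du=2$ then gives $M(t)\leq \mathcal{L}(dG)(t)\leq 2\,G(2C/t)$, leaving room for the announced factor $3$ to absorb any jump of $F$ at the origin. Combined with the second bullet on groups, this produces a two-sided comparison $G(y)\asymp M(y^{-1})$ up to rescaling by $2C$ and $en$, hence the equivalence of $H$-Sobolev and $N$-Sobolev. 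The main obstacle is precisely this Orlicz-level translation: one has to verify that a two-sided comparison of the monotone functions $G(y)$ and $M(y^{-1})$ survives the passage to their generalised inverses $G^{-1},M^{-1}$ and then to $H$ and $N$, with \eqref{eq:51} being exactly the condition that keeps these inverses comparable under a rescaling of the argument. The spectral-calculus steps themselves are otherwise routine.
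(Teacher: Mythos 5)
Your route through the three bullets is the paper's route, and the second and third bullets are handled correctly: the pinching of $\|\cdot\|_{1,\infty}$ against the density via \eqref{eq:7}, the bound $G(1/t)\le e\,\mathcal{L}(dG)(t)\le en\,M(t)$ obtained by restricting the integral to $\lambda\le 1/t$, the translation of $G(y)\le en\,M(y^{-1})$ into the pointwise comparison $en\,N(y)\le H(en\,y)$ of the Orlicz integrands, and the change of variables $u=\lambda t$ in the last bullet all match the paper's proof. In the third bullet you even obtain the constant $2$ in place of $3$, legitimately, because \eqref{eq:51} is assumed for \emph{all} $u>0$ and so can be applied on the whole half-line instead of only on $u\ge 2C$ (the paper uses plain monotonicity of $G$ on $[0,2C]$, which is why it ends with $1+2=3$); there is no ``jump of $F$ at the origin'' to absorb, since the convergence of $G$ forces $F(\lambda)/\lambda\to 0$, hence $G(0^+)=0$.

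The one genuine gap is your justification of \eqref{eq:49}. The principle you invoke --- that $\|\cdot\|_{1,\infty}$ is subadditive on the positive operator-valued measure $d\Pi_\lambda$, so that $\bigl\|\int_0^{+\infty}e^{-\lambda t}\,d\Pi_\lambda\bigr\|_{1,\infty}\le\int_0^{+\infty}e^{-\lambda t}\,dF(\lambda)$ --- would require the termwise domination $\|\Pi_{]a,b]}\|_{1,\infty}\le F(b)-F(a)$, and this is \emph{false} in general: from \eqref{eq:2} and orthogonality one only gets $F(b)\le F(a)+\|\Pi_{]a,b]}\|_{1,\infty}$, which is the wrong direction. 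For instance, if $A$ acts on $\ell^2(\{1,2\})$ diagonally with eigenvalues $1$ and $2$ on the two Dirac masses, then $F(1)=F(2)=1$ while $\|\Pi_{]1,2]}\|_{1,\infty}=1$, so $dF$ does not dominate the norms of the spectral increments. The inequality \eqref{eq:49} is nevertheless true, but for a different reason, which is exactly the content of the paper's first step: one performs an Abel summation / integration by parts, writing $e^{-tA}\Pi_V=t\int_0^{+\infty}e^{-\lambda t}\Pi_\lambda\,d\lambda$, which is where the \emph{monotonicity} of $\lambda\mapsto e^{-\lambda t}$ enters; the triangle inequality is then applied to an integral of positive operators whose $(1,\infty)$-norms are exactly $F(\lambda)$, and a second integration by parts recovers $\int_0^{+\infty}e^{-\lambda t}\,dF(\lambda)$. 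The same remark applies, harmlessly, to your passage from the pointwise bound $F(\lambda)\le D(\Pi_\lambda)$ to $\mathcal{L}(dF)\le\mathcal{L}\bigl(d\,D(\Pi_\cdot)\bigr)$ in the second bullet: that step is valid precisely because the test function is decreasing and both distribution functions vanish at $0$, and it should be phrased through the same integration by parts. Once \eqref{eq:49} is repaired this way, your Fubini derivation of \eqref{eq:50} and everything downstream go through as written.
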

\begin{proof}
  $\bullet $ By spectral calculus $e^{-tA}\Pi_V = \int_0^{+\infty}
  e^{-t\lambda} d \Pi_\lambda = t \int_0^{+\infty} e^{-t \lambda}
  \Pi_\lambda d \lambda$, hence
  \begin{displaymath}
    L(t) = \| e^{-t A} \Pi_V\|_{1, \infty} \leq t \int_0^{+\infty} e^{-t
      \lambda} \|\Pi_\lambda\|_{1, \infty} d\lambda =
    \mathcal{L}(dF)(t) \,,
  \end{displaymath}
  and thus
  \begin{displaymath}
    M(t) = \int_t^{+\infty} L(s) ds  \leq
    \int_t^{+\infty} \int_0^{+\infty} e^{-\lambda s } d F(\lambda)
    ds = \int_0^{+\infty} \frac{e^{-\lambda t}}{\lambda} dF(\lambda) =
    \mathcal{L}(dG) (t)\,.  
  \end{displaymath}
  
  $\bullet$ For positive invariant operators $P$ on groups, the
  ultracontractive norm $\|P\|_{1, \infty}$ is pinched between the
  density $D (P)$ and $n D(P)$ by \eqref{eq:7}. This gives the reverse
  inequalities by positive linearity of $D(P)$ on such operators. In
  particular one gets
  \begin{align*}
    n M( y^{-1}) & \geq \int_0^{+\infty} e^{-\lambda/y} dG (\lambda) =
    y^{-1}
    \int_0^{+\infty} e^{-\lambda/y} G(\lambda) d \lambda \\
    & \geq y^{-1} \int_y^{+\infty} e^{-\lambda/y} G(y) d\lambda =
    e^{-1} G(y).
  \end{align*}
  Therefore $N(y) = y / M^{-1}(y^{-1}) \leq y G^{-1}(ey) = e^{-1}
  H(ey)$ and $H$-Sobolev implies
  \begin{displaymath}
    \int_X N \Bigl( \frac{|f(x)|^2}{4e \mathcal{E}(f)} \Bigr) d \mu
    \leq e^{-1}\,. 
  \end{displaymath}
  
  $\bullet$
  If $G$ satisfies the growing condition \eqref{eq:51}, one has by
  \eqref{eq:50}
  \begin{align*}
    M(1/y) & \leq \int_0^{+\infty} e^{-\lambda/y} d G(\lambda) =
    \int_0^{+\infty} e^{-u} G(u y) d u \\
    & \leq \int_0^{2C} e^{-u} G(2C y) du + \int_{2C}^{+\infty} e^{-
      u/2}
    G(2C y) du  \\
    & \leq 3 G(2Cy) \,.
  \end{align*}
\end{proof}

We note that it may happen that $N \ll H$ for very thin near--zero
spectrum. In an extreme case there may be a gap in the spectrum,
i.e. $A \geq \lambda_0 > 0$, hence $F = G =0$ on $[0,\lambda_0[$ and
$H(y) \geq \lambda_0 y$, while $L(t) \asymp C e^{-c t} $, $M (t)
\asymp C' e^{-c t}$ and $N(y) \asymp C'' y / \ln (y/C')$.

\smallskip

We mention also that a similar statement holds at the mixed state
level between the two $\rho$-Sobolev inequalities obtained from the
spectral density $F(\lambda)= D(\Pi_\lambda)$ or the heat density
$L(t) = D(e^{-t A})$. Namely the $F$ version is stronger in general to
the $L$ one on groups, although of the same strength under the same
growing assertion \eqref{eq:51} on $G$, that excludes extremely thin
spectrum near zero.

\subsection{From Sobolev to Faber--Krahn}
\label{sec:sobolev-faber-Krahn}

We have seen in the introduction how the mixed state Sobolev
inequality implies a global Faber--Krahn inequality \eqref{eq:13} in
the case of a polynomial spectral density. We extend this to other
profiles and compare the result to Corollary~\ref{cor:Faber-Krahn},
obtained from the mixed state Moser approach.

\begin{prop}
  \label{prop:Sobolev-Faber-Krahn}
  With the assumptions of Theorem~\ref{thm:rho-Sobolev}, suppose
  moreover that $\rho$ is supported in $\Omega$ of finite volume. Then
  \begin{equation}
    \label{eq:52}
    \tau(\rho) \leq 8 \mu(\Omega) \|\rho^{1/2} A \rho^{1/2 }\|_{2,2} G
    (8 \langle A \rangle_\rho)
  \end{equation}
  $\bullet$ In particular the following global Faber--Krahn inequality
  holds
  \begin{equation}
    \label{eq:53}
    F_\Omega^{\dim} (\lambda) \leq 8 \mu(\Omega) \lambda G(8\lambda) \,,
  \end{equation}
  which is a priori weaker than \eqref{eq:19}, since $F(\lambda) \leq
  \lambda G(\lambda)$ in general.
\end{prop}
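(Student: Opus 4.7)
The plan is to apply the $\rho$-Sobolev inequality \eqref{eq:9} from Theorem~\ref{thm:rho-Sobolev} and squeeze a Faber--Krahn type estimate out of it via a simple Chebyshev-style threshold argument. Set $K = 4\|\rho^{1/2} A \rho^{1/2}\|_{2,2}$ and $u(x) = D\nu_\rho(x)/K$, so that \eqref{eq:9} reads
\begin{displaymath}
  \int_\Omega G^{-1}(u(x))\, d\nu_\rho(x) \leq 4 \mathcal{E}(\rho).
\end{displaymath}
Since $\rho$ is supported in $\Omega$, the density $D\nu_\rho$ vanishes outside $\Omega$ and $\nu_\rho(\Omega) = \tau(\rho)$. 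The integrand is non-decreasing in $u$, so I would slice the domain at a threshold $c>0$ to be chosen.

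On the level set $\{u \leq c\}$ one has $D \nu_\rho \leq cK$, hence
\begin{displaymath}
  \nu_\rho(\{u\leq c\}) = \int_{\{u\leq c\}} D\nu_\rho\, d\mu \leq c K \mu(\Omega),
\end{displaymath}
so $\nu_\rho(\{u>c\}) \geq \tau(\rho) - cK \mu(\Omega)$. On the complementary set $\{u>c\}$, monotonicity of $G^{-1}$ gives $G^{-1}(u)\geq G^{-1}(c)$. Restricting the $\rho$-Sobolev integral to $\{u>c\}$ therefore yields
\begin{displaymath}
  G^{-1}(c)\,\bigl(\tau(\rho) - c K \mu(\Omega)\bigr) \leq 4 \mathcal{E}(\rho).
\end{displaymath}
The natural optimization is $cK\mu(\Omega) = \tau(\rho)/2$, i.e.\ $c = \tau(\rho)/(2K\mu(\Omega))$. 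This turns the bracket into $\tau(\rho)/2$ and gives
\begin{displaymath}
  G^{-1}\!\Bigl(\tfrac{\tau(\rho)}{2K\mu(\Omega)}\Bigr) \leq \frac{8\mathcal{E}(\rho)}{\tau(\rho)} = 8 \langle A\rangle_\rho.
\end{displaymath}
Applying $G$ and using that $G^{-1}(c)\leq \nu$ implies $c\leq G(\nu)$, which follows from the definition of the right-continuous inverse together with right-continuity of $G$ (visible from \eqref{eq:26}), I obtain $\tau(\rho) \leq 2K\mu(\Omega) G(8\langle A\rangle_\rho)$, which is exactly \eqref{eq:52}.

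For the Faber--Krahn consequence \eqref{eq:53}, I would specialise $\rho = \Pi_V$ where $V \subset L^2(X,H)$ is supported in $\Omega$ and satisfies $\mathcal{E}(f) \leq \lambda\|f\|_2^2$ on $V$. Then $\tau(\rho) = \dim V$, $\langle A\rangle_\rho \leq \lambda$, and $\|\rho^{1/2}A\rho^{1/2}\|_{2,2} = \|\Pi_V A \Pi_V\|_{2,2} \leq \lambda$ since $A\leq \lambda\,\id$ on $V$, so \eqref{eq:52} turns into \eqref{eq:53}. The comparison with \eqref{eq:19} follows immediately from \eqref{eq:26}, which gives $F(\lambda) \leq \lambda G(\lambda)$. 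I do not foresee any substantial obstacle here; the only mildly delicate point is the careful handling of the right-continuous inverse $G^{-1}$ and the choice of the threshold $c$ producing the exact constant $8$ advertised in the statement.
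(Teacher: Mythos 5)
Your proof is correct and reaches \eqref{eq:52} with the same constants as the paper. The one genuine difference is the device used to extract the threshold from the $\rho$-Sobolev integral: you slice at the level set $\{D\nu_\rho>cK\}$ and combine the Chebyshev bound $\nu_\rho(\{u\le c\})\le cK\mu(\Omega)$ with the monotonicity of $G^{-1}$, whereas the paper integrates the Young-type inequality $st\le sG(s)+tG^{-1}(t)$ over $\Omega$ and then lets $s\nearrow G^{-1}\bigl(\tau(\rho)/(8\mu(\Omega)\|\rho^{1/2}A\rho^{1/2}\|_{2,2})\bigr)$ --- the same trick it uses for the Nash inequality \eqref{eq:33} in \S\ref{sec:related-inequalities}. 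The two devices are interchangeable here: your choice $c=\tau(\rho)/(2K\mu(\Omega))$ plays exactly the role of the paper's optimization in $s$, both avoid any convexity assumption on $y\mapsto yG^{-1}(y)$ (under which, as the paper notes, Jensen would give better constants), and both conclude with the same right-continuity step $G(G^{-1}(c))\ge c$. Your passage from \eqref{eq:52} to \eqref{eq:53} via $\rho=\Pi_V$, $\|\Pi_V A\Pi_V\|_{2,2}\le\lambda$ and $\langle A\rangle_\rho\le\lambda$ is the intended one (the paper leaves it implicit, as in the polynomial case \eqref{eq:12}--\eqref{eq:13}); the only caveat, present in the paper as well, is that $V$ should be taken in $(\ker A)^\perp$ so that $\rho=\Pi_V$ satisfies the hypothesis $\rho=0$ on $\ker A$ of Theorem~\ref{thm:rho-Sobolev}.
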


\begin{proof}
  When the function $t G^{-1}(t) $ is convex, Jensen inequality easily
  gives the result, with better coefficients. Without convexity
  assumption, we can argue again as in
  \S\ref{sec:related-inequalities}. Observe that for all non-negative
  $s$ and $t$ one has
  \begin{displaymath}
    st  \leq s G(s) + t G^{-1}(t)\,.
  \end{displaymath}
  Applying to $t= \frac{D \nu_\rho(x)}{4 \|\rho^{1/2} A \rho^{1/2
    }\|_{2,2}}$ and integrating over $\Omega$ yields
  \begin{align*}
    \frac{s \tau(\rho)}{4 \|\rho^{1/2} A \rho^{1/2 }\|_{2,2}} - s G(s)
    \mu(\Omega) & \leq \int_X \frac{D \nu_\rho(x)}{4 \|\rho^{1/2} A
      \rho^{1/2 }\|_{2,2}} G^{-1}\Bigl( \frac{D \nu_\rho(x)}{4
      \|\rho^{1/2} A \rho^{1/2 }\|_{2,2}}
    \Bigr) d \mu \\
    & \leq \frac{\tau(A\rho)}{ \|\rho^{1/2} A \rho^{1/2 }\|_{2,2}}
    \quad \mathrm{by} \quad \eqref{eq:9} \,.
  \end{align*}
  Using
  \begin{displaymath}
    s \nearrow G^{-1}\Bigl(\frac{\tau(\rho)}{8 \mu(\Omega) \|\rho^{1/2}
      A \rho^{1/2 }\|_{2,2}} \Bigr) = 
    \sup \Bigl\{s \mid G(s) \leq \frac{\tau(\rho)}{8 \mu(\Omega) \|\rho^{1/2}
      A \rho^{1/2 }\|_{2,2}} \Bigr\} \,,
  \end{displaymath}
  gives
  \begin{displaymath}
    G^{-1}\Bigl(\frac{\tau(\rho)}{8 \mu(\Omega) \|\rho^{1/2}
      A \rho^{1/2 }\|_{2,2}} \Bigr) \leq 8 \langle A \rangle_\rho \,,
  \end{displaymath}
  and \eqref{eq:52} since $G(G^{-1}(s)) \geq s$ when $G^{-1}(s)$ is
  finite by right continuity of $G$.
\end{proof}

Observe that one may have $F (\lambda) \ll \lambda G(\lambda)$ for
very thick near-zero spectrum, even when $G$ converges. For instance
if $F(\lambda) = \lambda / \ln^2 \lambda$ then $\lambda G(\lambda)=
(-\ln \lambda + 1 )F(\lambda)$. Except this ``low dimensional''
phenomenon, one has $\lambda G(\lambda) \underset{0}{\asymp}
F(\lambda)$ in the other cases, and thus the two global Faber--Krahn
inequalities \eqref{eq:19} and \eqref{eq:53} obtained through
$\rho$-Sobolev or Moser inequalities have same strength. For instance
this holds if $F(\lambda) \underset{0}{\sim} \lambda^{1+\varepsilon}
\varphi(\lambda)$ for some $\varepsilon > 0$ and an increasing
$\varphi>0$. This comes from the following remark.
\begin{prop}
  \label{prop:3:1}Suppose there exists $\varepsilon > 0$ such that,
  for small $\lambda$, $F$ satisfies the growing condition $
  F(2\lambda) \geq 2 (1+ \varepsilon) F(\lambda)$, then
  $(2+\varepsilon^{-1}) F(\lambda) \geq \lambda G(\lambda) \geq
  F(\lambda)$.
\end{prop}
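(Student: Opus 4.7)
\medskip

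The plan is to work directly from the explicit formula
\begin{equation*}
  \lambda G(\lambda) = F(\lambda) + \lambda \int_0^\lambda u^{-2} F(u)\, du
\end{equation*}
established in \eqref{eq:26}. The lower bound $\lambda G(\lambda) \geq F(\lambda)$ is then immediate, since $F \geq 0$ makes the integral term non-negative. All the work is in controlling the upper bound, i.e.\ showing that $\lambda \int_0^\lambda u^{-2}F(u)\,du \leq \varepsilon^{-1}(1+\varepsilon) F(\lambda) = (1+\varepsilon^{-1})F(\lambda)$.

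For this I would perform a dyadic decomposition on the interval $]0,\lambda]$, writing it as $\bigsqcup_{n\geq 0} \,]\lambda/2^{n+1},\,\lambda/2^n]$. On each piece, since $F$ is increasing one has
\begin{equation*}
  \int_{\lambda/2^{n+1}}^{\lambda/2^n} u^{-2} F(u)\, du \leq F(\lambda/2^n) \Bigl( \frac{2^{n+1}}{\lambda} - \frac{2^n}{\lambda}\Bigr) = \frac{2^n}{\lambda} F(\lambda/2^n).
\end{equation*}
Iterating the hypothesis $F(2\mu) \geq 2(1+\varepsilon) F(\mu)$, valid for small enough $\mu$, gives $F(\lambda/2^n) \leq (2(1+\varepsilon))^{-n} F(\lambda)$ provided $\lambda$ itself is small enough that all dyadic scales $\lambda/2^n$ lie in the range of validity. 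Summing the geometric series yields
\begin{equation*}
  \int_0^\lambda u^{-2} F(u)\, du \leq \frac{F(\lambda)}{\lambda} \sum_{n\geq 0} (1+\varepsilon)^{-n} = \frac{F(\lambda)}{\lambda} \cdot \frac{1+\varepsilon}{\varepsilon},
\end{equation*}
which combined with \eqref{eq:26} gives the announced $\lambda G(\lambda) \leq (2+\varepsilon^{-1}) F(\lambda)$.

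The only minor subtlety is the meaning of ``for small $\lambda$'' in the hypothesis: the dyadic iteration uses the doubling inequality at every scale $\lambda/2^n$ with $n \geq 1$, so one must take $\lambda$ itself small enough that the whole dyadic tower sits inside the region where the growth condition holds. Since this region is of the form $]0,\lambda_0]$, this is automatic. Apart from that, the argument is a direct integration-by-parts-style manipulation, and I do not expect any real obstacle.
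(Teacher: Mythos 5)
Your argument is correct and yields exactly the paper's constant $2+\varepsilon^{-1}$, but it executes the upper bound differently. The paper also starts from \eqref{eq:26}, yet splits the integral only once, at $\lambda/2$: the piece over $[\lambda/2,\lambda]$ is bounded by $F(\lambda)/\lambda$ using monotonicity, while the piece over $]0,\lambda/2]$ is bounded by $\frac{1}{2(1+\varepsilon)}\int_0^{\lambda/2}F(2u)u^{-2}\,du$, which after the substitution $v=2u$ is recognised as $\frac{1}{1+\varepsilon}\bigl(G(\lambda)-F(\lambda)/\lambda\bigr)$; one then solves the resulting self-referential inequality $G\le 2F/\lambda+\frac{1}{1+\varepsilon}\bigl(G-F/\lambda\bigr)$ for $G$. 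Your dyadic decomposition is the fully unrolled version of that one-step bootstrap, replacing the absorption step by an explicit geometric series $\sum_{n\ge 0}(1+\varepsilon)^{-n}=1+\varepsilon^{-1}$. The two arguments are essentially equivalent and give the same constant, but yours has a small technical advantage: the paper's rearrangement is only legitimate once $G(\lambda)$ is known to be finite (otherwise $G\le\cdots+\frac{1}{1+\varepsilon}G$ carries no information), whereas your convergent series establishes finiteness of $G$ near $0$ as a byproduct of the growth hypothesis alone. Your closing remark about needing the doubling condition at every dyadic scale is the same care the paper implicitly takes (it uses $F(2u)\ge 2(1+\varepsilon)F(u)$ for all $u\le\lambda/2$), so nothing is lost there.
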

\begin{proof}
  By \eqref{eq:26}, one has
  \begin{align*}
    G(\lambda) & = \int_0^\lambda \frac{dF(u)}{u} =
    \frac{F(\lambda)}{\lambda} + \int_0^\lambda \frac{F(u)}{u^2} du \\
    & = \frac{F(\lambda)}{\lambda} + \Bigl( \int_0^{\lambda/2} +
    \int_{\lambda/2}^\lambda \Bigr) \frac{F(u)}{u^2} du \\
    & \leq \frac{2 F(\lambda)}{\lambda} + \int_0^{\lambda/2}
    \frac{F(2u)}{2 (1+\varepsilon) u^2} du \quad \mathrm{ by\
      hypothesis\ on\ }F\,,\\
    & \leq \frac{2 F(\lambda)}{\lambda} + \frac{1}{1+\varepsilon}
    \Bigl( G(\lambda) - \frac{ F(\lambda)}{\lambda} \Bigr)\,,
  \end{align*}
  leading to $\lambda G(\lambda) \leq (2 + \varepsilon^{-1})
  F(\lambda)$\,.
\end{proof}
As a curiosity, we note that under the growing hypothesis on $F$
above, the spectral density of states $F$ and the spatial repartition
function $H$ have symmetric expressions with respect to $G$ and
$G^{-1}$. Indeed, one has simply there
\begin{equation}
  \label{eq:54}
  F(\lambda) \asymp \lambda G(\lambda) \quad \mathrm{while } \quad
  H(x) = x G^{-1}(x)\,.
\end{equation}

  

\subsection{Discrete and integral $\rho$-Moser inequalities}
\label{sec:discr-inte-rho}

We now study the relationships between the two versions of the
$\rho$-Moser inequalities given in Theorem~\ref{thm:F-Sobolev}
and~\ref{thm:rho-Moser-partition}.

A first remark is that the discrete case may be seen as an instance of
the integral version.  Indeed if $X = \bigsqcup_I \Omega_i$, then one
may split $\mathcal{H} = L^2(X, \mu) \otimes H = \bigoplus_I
L^2(\Omega_i, \mu)\otimes H$ and, given a positive $P$ on
$\mathcal{H}$, defines a measure $\nu'_P$ on $I$ by
\begin{displaymath}
  \nu'_P(J) = \sum_{i \in J} \tau( \chi_{\Omega_i} P \chi_{\Omega_i})
  =  \nu_P(\cup_J \Omega_i) \,. 
\end{displaymath}
Then the density function of $\nu'_P$ writes
\begin{displaymath}
  D\nu'_P(i) = \tau(\chi_{\Omega_i} P
  \chi_{\Omega_i} )= \nu_P(\Omega_i) \,.
\end{displaymath}
In this setting the integral formula \eqref{eq:16} yields the discrete
one \eqref{eq:17}.

\smallskip

Another feature of these formulae is that, up to multiplicative
constants, the integral formula dominates all the discrete one,
whatever the partition of $X$.

\begin{prop}
  \label{prop:continuous-discrete-Moser}
  With notations of Theorem~\ref{thm:F-Sobolev}, given a measurable
  $\Omega$ in $X$, one has
  \begin{equation}
    \label{eq:55}
    F_\Omega^{-1}\Bigl( \frac{\nu_\rho
      (\Omega)}{8\|\rho\|_{2,2}} \Bigr) \nu_\rho (\Omega)  \leq
    2 \int_\Omega F_x^{-1} \Bigl( \frac{D \nu_\rho(x)}{4 \|\rho\|_{2,2}} \Bigr)
    d \nu_\rho \,.
  \end{equation}
  In particular, for any partition $X = \bigsqcup_I \Omega_i$ one has
  \begin{displaymath}
    \sum_i F_{\Omega_i}^{-1}\Bigl( \frac{\nu_\rho
      (\Omega_i)}{8\|\rho\|_{2,2}} \Bigr) \nu_\rho (\Omega_i)  \leq
    2 \int_X F_x^{-1} \Bigl( \frac{D \nu_\rho(x)}{4 \|\rho\|_{2,2}} \Bigr)
    d \nu_\rho  \,.
  \end{displaymath}
\end{prop}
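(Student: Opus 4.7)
My plan is to prove the pointwise inequality~\eqref{eq:55} first; the discrete sum version then follows immediately by summing~\eqref{eq:55} with $\Omega=\Omega_i$ and using $\sum_i \int_{\Omega_i} = \int_X$. So I focus on~\eqref{eq:55}.

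The crucial observation is that by Definition~\ref{def:P-measure} and Fubini,
\[
F_\Omega(\lambda) = \nu_{\Pi_\lambda}(\Omega) = \int_\Omega F_x(\lambda)\, d\mu(x),
\]
so $F_\Omega$ is nothing but the integral over $\Omega$ of the pointwise densities $F_x$. Mimicking the level-set splitting used in~\eqref{eq:29}, \eqref{eq:35}, \eqref{eq:41} and~\eqref{eq:43}, for each fixed $\lambda>0$ I partition
\[
\Omega = \Omega_1(\lambda) \sqcup \Omega_2(\lambda), \qquad \Omega_2(\lambda) = \Bigl\{ x \in \Omega \,\Big|\, F_x(\lambda) > \tfrac{D\nu_\rho(x)}{4\|\rho\|_{2,2}} \Bigr\}.
\]
Integrating the defining strict inequality against $d\mu$ on $\Omega_2(\lambda)$ gives
\[
\nu_\rho(\Omega_2(\lambda)) \leq 4\|\rho\|_{2,2} \int_{\Omega_2(\lambda)} F_x(\lambda)\, d\mu \leq 4\|\rho\|_{2,2}\, F_\Omega(\lambda);
\]
on $\Omega_1(\lambda)$, the reverse inequality together with the definition of $F_x^{-1}$ (completed as in~\eqref{eq:44} when needed) yields $F_x^{-1}\bigl(D\nu_\rho(x)/(4\|\rho\|_{2,2})\bigr) \geq \lambda$.

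Now set $y = \nu_\rho(\Omega)/(8\|\rho\|_{2,2})$ and $\lambda_0 = F_\Omega^{-1}(y)$. For any $\lambda<\lambda_0$ one has $F_\Omega(\lambda) \leq y$ by definition of the right-continuous inverse, whence
\[
\nu_\rho(\Omega_2(\lambda)) \leq 4\|\rho\|_{2,2}\, y = \tfrac{1}{2}\nu_\rho(\Omega),
\]
and consequently $\nu_\rho(\Omega_1(\lambda)) \geq \tfrac{1}{2}\nu_\rho(\Omega)$. Therefore
\[
\int_\Omega F_x^{-1}\Bigl(\tfrac{D\nu_\rho(x)}{4\|\rho\|_{2,2}}\Bigr) d\nu_\rho \,\geq\, \lambda\, \nu_\rho(\Omega_1(\lambda)) \,\geq\, \tfrac{\lambda}{2}\, \nu_\rho(\Omega),
\]
and letting $\lambda \nearrow \lambda_0$ gives exactly~\eqref{eq:55}. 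The only real bookkeeping subtlety is the boundary convention for $F^{-1}$ in the $\Pi_{[0,\lambda]}$ setting (cf.~\eqref{eq:44},~\eqref{eq:46}): if $y < F_\Omega(0)$ then $\lambda_0=0$ and~\eqref{eq:55} is trivial, so this case causes no trouble. I do not expect any serious obstacle beyond this verification.
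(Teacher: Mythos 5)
Your argument is correct and yields \eqref{eq:55} with the same constants, but it is organized differently from the paper's proof. The paper's route is a Young-type inequality, the same device as \eqref{eq:37} in \S\ref{sec:related-inequalities}: one writes $st \leq s F_x(s) + t F_x^{-1}(t)$ pointwise, applies it with $t = D\nu_\rho(x)/(4\|\rho\|_{2,2})$, integrates over $\Omega$ against $d\mu$ using $\int_\Omega F_x(s)\,d\mu = \nu_{\Pi_s}(\Omega) = F_\Omega(s)$, and then lets $s \nearrow F_\Omega^{-1}\bigl(\nu_\rho(\Omega)/(8\|\rho\|_{2,2})\bigr)$ so that the term $sF_\Omega(s)$ absorbs exactly half of $s\,\nu_\rho(\Omega)/(4\|\rho\|_{2,2})$. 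You instead integrate the underlying dichotomy directly: for each $\lambda < \lambda_0$ you partition $\Omega$ according to whether $F_x(\lambda)$ exceeds $D\nu_\rho(x)/(4\|\rho\|_{2,2})$, show by a Chebyshev-type estimate that the exceptional set carries at most half of $\nu_\rho(\Omega)$, and bound the integrand below by $\lambda$ on the remainder. These are dual presentations of the same mechanism --- the case analysis proving the Young inequality is precisely your partition $\Omega = \Omega_1(\lambda)\sqcup\Omega_2(\lambda)$, and both proofs rest on the identity $F_\Omega(\lambda)=\int_\Omega F_x(\lambda)\,d\mu$. Your version has the merit of matching the level-set style of \eqref{eq:29}, \eqref{eq:41} and \eqref{eq:43} and of making visible where the factor $2$ on the right of \eqref{eq:55} comes from; the paper's version avoids the limit $\lambda\nearrow\lambda_0$ and the (minor) checks that $\Omega_2(\lambda)$ is measurable and that $\nu_\rho(\Omega)<+\infty$ so that $\nu_\rho(\Omega_1(\lambda))=\nu_\rho(\Omega)-\nu_\rho(\Omega_2(\lambda))$ makes sense. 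Your treatment of the conventions \eqref{eq:44}--\eqref{eq:46} in the $\Pi_{[0,\lambda]}$ setting, and the observation that on $\Omega_1(\lambda)$ one automatically has $D\nu_\rho(x)/(4\|\rho\|_{2,2})\geq F_x(0)$ so that the supremum branch of \eqref{eq:44} applies, are exactly the right bookkeeping.
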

Hence Theorem~\ref{thm:F-Sobolev} implies
Theorem~\ref{thm:rho-Moser-partition} and
Corollary~\ref{cor:Faber-Krahn}, up to weaker constants $8$ instead of
$4$.

\begin{proof}
  Given $x \in X$, one has for all non negative $s$ and $t$
  \begin{displaymath}
    st \leq s F_x(s) + t F_x^{-1}(t) \,,
  \end{displaymath}
  where $F_x (\lambda) = D \nu_{\Pi_\lambda}(x)$. Applying to $t =
  \frac{D \nu_\rho(x)}{4 \|\rho\|_{2,2}}$ and integrating on $\Omega$
  yields
  \begin{displaymath}
    s \frac{\nu_\rho(\Omega)}{4 \|\rho\|_{2,2}} - s
    F_\Omega(s) \leq  \int_\Omega F_x^{-1} \Bigl( \frac{D
      \nu_\rho(x)}{4 \|\rho\|_{2,2}} \Bigr) 
    \frac{d \nu_\rho}{4 \|\rho\|_{2,2}} \,.   
  \end{displaymath}
  This gives \eqref{eq:55} using
  \begin{displaymath}
    s \nearrow F_\Omega^{-1} \Bigl( \frac{\nu_\rho(\Omega)}{8
      \|\rho\|_{2,2}} \Bigr) = \sup \Bigl\{ s \mid F_\Omega( s) \leq 
    \frac{\nu_\rho(\Omega)}{8 
      \|\rho\|_{2,2}} \Bigr\} \,.
  \end{displaymath}
\end{proof}

\subsection{Ultracontractivity, density and von Neumann trace}
\label{sec:ultrac-dens-von}

We now discuss Proposition~\ref{prop:density} that relates the three
measurements of positive operators used here: through
ultracontractivity, density function, or $\Gamma$-trace.

At first, \eqref{eq:6} comes directly from the definition \eqref{eq:5}
in the form $\nu_P(\Omega) = \tau(P^{1/2} \chi_\Omega P^{1/2})$. This
expression of the density is useful when $P$ is a positive compact
operator. Namely if $P = \sum_i \lambda_i \Pi_{e_i}$ is the spectral
decomposition of $P$, it reads
\begin{displaymath}
  D \nu_P(x) = \sum_{i} \lambda_i \|e_i(x)\|_H^2 \quad\mathrm{almost \
    everywhere.}  
\end{displaymath}
Also, \eqref{eq:6} clearly implies that $\|P\|_{1, \infty} =
\|P^{1/2}\|^2_{2,\infty} \leq D(P)= \mathop{\mathrm{supess}}D
\nu_P(x)$ holds in general. For the opposite inequality, we suppose $H
$ is finite $d$-dimensional. Then given a basis $(e_i)$ of
$\mathcal{H}$ and $(h_j)$ of $H$, one has by \eqref{eq:6}
\begin{displaymath}
  D\nu_P(x)  = \sum_i\|P^{1/2} e_i(x)\|_H^2 = \sum_{j=1}^d \sum_i
  \langle( P^{1/2} e_i)(x), h_j \rangle^2 
\end{displaymath}
where, for each $j$, by Cauchy--Schwarz
\begin{align*}
  \sum_i \langle (P^{1/2} e_i)(x), h_j \rangle^2 & = \sup_{\sum c_i^2
    \leq 1} \bigl( \sum_i c_i \langle (P^{1/2} e_i)(x),
  h_j \rangle \bigr)^2  \\
  & = \sup_{\sum c_i^2 \leq 1} \Bigl( \langle \bigl(P^{1/2} (\sum_i
  c_ie_i)\bigr) (x),
  h_j \rangle \Bigr)^2\\
  & = \sup_{\|f\|_2 \leq 1} \bigl( \langle (P^{1/2} f)(x), h_j\rangle
  \bigr)^2 \\
  & \leq \|P^{1/2}\|_{2,\infty}^2 = \|P\|_{1,\infty}\,.
\end{align*}
This gives $D\nu_P(x) \leq d \|P\|_{1, \infty}$ as needed.

\smallskip

We would like to illustrate here the relevance of the spectral density
$D(\Pi_\lambda)$ when dealing with general mixed state inequalities,
while the ultracontractive norm $\|\Pi_\lambda\|_{1,\infty}$ is
adapted to functions. Indeed, suppose that $A$ is a scalar positive
operator with finite $F(\lambda)= \|\Pi_\lambda\|_{1,\infty} =
D(\Pi_\lambda)$ and consider $n$-copies $A_n$ of $A$ acting diagonally
on $\mathcal{H}_n = L^2(X , \C^n)$. Then one finds easily that
\begin{displaymath}
  \|\Pi_\lambda(A_n) \|_{1,\infty} = \|\Pi_\lambda(A) \|_{1,\infty} (=
  \|\Pi_\lambda(A)\|_{2,\infty}^2) 
  \quad \mathrm{while} \quad D(\Pi_\lambda(A_n)) = n D(\Pi_\lambda(A)) \,,
\end{displaymath}
if one sets $\|f\|_\infty = \sup_X \|f(x)\|_2$ and $\|f\|_1 = \int_X
\|f(x)\|_2 d\mu$ with the hermitian norm of $\C^n$. Hence the Sobolev
inequality \eqref{eq:3} is independent of the phase space dimension
$n$, as the Faber--Krahn inequality \eqref{eq:34} for the \emph{first}
eigenvalue $\lambda_{1,n}$ of $A_n$ on a domain $\Omega$, that writes
\begin{displaymath}
  4 \mu(\Omega) F (8 \lambda_{1,n}(\Omega)) \geq 1 \,.
\end{displaymath}
In comparison, the global Faber--Krahn inequality \eqref{eq:19} only
yields
\begin{displaymath}
  4n \mu(\Omega) F(4\lambda_{1,n}(\Omega)) \geq 1
\end{displaymath}
for the first eigenvalue, but also gives the linear bound in $n$ of
the whole spectral distribution, namely: $F_{\Omega,
  A_n}^{\dim}(\lambda) \leq 4n \mu(\Omega) F(4 \lambda)$.


\smallskip

We come back to the study of the density of invariant operators on
locally compact groups $\Gamma$ with their Haar measure. In such a
case, the measure $\nu_P(\Omega) = \tau(\chi_\Omega P \chi_\Omega) $
is clearly invariant too, thus its density function is constant, as
claimed in Proposition~\ref{def:P-measure}. Moreover, when $\Gamma$ is
discrete, this density coincides with von Neumann trace since
\begin{displaymath}
  D(P)= D \nu_P(e) = \tau_\mathcal{H}(\chi_e P \chi_e ) = \tau_H
  (K_P(e,e)) \overset{\mathrm{def}}{=} \tr_\Gamma(P) \,,
\end{displaymath} 
where $K_P$ is the kernel of $P$.  More generally, one can
characterise finite density operators on non necessarily discrete
groups as follows.
\begin{prop}
  \label{prop:traceclass}
  Let $\Gamma$ be a locally compact group with its Haar measure and
  $Q$ be a bounded $\Gamma$-invariant operator on $\mathcal{H} =
  L^2(\Gamma, \mu) \otimes H$. Let $E$ be the space of
  Hilbert--Schmidt operators on $H$ endowed with the Hilbert--Schmidt
  norm.

  $\bullet$ Then $P= Q^*Q$ has a finite density $D(P)$ iff $Q$ has a
  kernel $K_Q(x,y) = k_Q(y^{-1}x)$ with $k_Q \in L^2(\Gamma, E)$ and $
  D(P) = \int_\Gamma \|k_Q(x)\|_{HS}^2 d \mu $.
  
  $\bullet$ If moreover $\Gamma$ is \emph{unimodular},
  one has $D (Q^*Q) = D(Q Q^*)$ and the density actually defines a
  faithful trace in that case.
\end{prop}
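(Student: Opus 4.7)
The plan is to reduce everything to the identity
\begin{equation*}
\nu_P(\Omega) = \tau(\chi_\Omega Q^*Q \chi_\Omega) = \|Q\chi_\Omega\|_{HS}^2
\end{equation*}
which comes directly from Definition~\ref{def:P-measure} and \eqref{eq:40}. So the density of $P = Q^*Q$ just measures, in Hilbert--Schmidt norm, the restriction of $Q$ to functions supported in $\Omega$.

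First I would establish that a $\Gamma$-invariant bounded $Q$ satisfying $D(Q^*Q) < +\infty$ is a convolution operator. The point is that for any relatively compact $\Omega$, the bound $\|Q \chi_\Omega\|_{HS}^2 = \mu(\Omega) D(P) < +\infty$ makes $Q\chi_\Omega$ Hilbert--Schmidt, hence an integral operator with a kernel $K_\Omega(x,y)$ that is $L^2$ in $(x,y)$ with values in $E$. Consistency on $\Omega \subset \Omega'$ gives a kernel $K_Q(x,y)$ defined on $\Gamma \times \Gamma$ with values in $E$, and left $\Gamma$-invariance $L_g Q = Q L_g$ translates, through the left-invariance of Haar measure, into $K_Q(gx,gy) = K_Q(x,y)$ almost everywhere; so $K_Q(x,y) = k_Q(y^{-1}x)$ for some measurable $k_Q \colon \Gamma \to E$. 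Conversely, if such a $k_Q \in L^2(\Gamma, E)$ exists, the operator is clearly bounded on functions with compact support and $Q\chi_\Omega$ has the right kernel.

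Next I would compute $\|Q\chi_\Omega\|_{HS}^2$ directly from the kernel formula: by Fubini and the change of variable $z = y^{-1}x$ (valid for left Haar measure),
\begin{equation*}
\|Q\chi_\Omega\|_{HS}^2 = \int_\Omega \int_\Gamma \|k_Q(y^{-1}x)\|_{HS}^2 \, d\mu(x)\, d\mu(y) = \mu(\Omega) \int_\Gamma \|k_Q(z)\|_{HS}^2 \, d\mu(z).
\end{equation*}
Comparing with $\nu_P(\Omega) = D(P)\mu(\Omega)$ (from Proposition~\ref{prop:density}, since the density is constant on groups) yields $D(P) = \int_\Gamma \|k_Q(x)\|_{HS}^2 d\mu$, and in particular finite density is equivalent to $k_Q \in L^2(\Gamma, E)$.

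For the second part, I would compute the kernel of $Q^*$ from that of $Q$: taking adjoints of integral operators gives $K_{Q^*}(x,y) = K_Q(y,x)^* = k_Q(x^{-1}y)^*$, which rewrites as $K_{Q^*}(x,y) = k_{Q^*}(y^{-1}x)$ with $k_{Q^*}(z) = k_Q(z^{-1})^*$. Applying the first part to $QQ^* = (Q^*)^*(Q^*)$ then gives
\begin{equation*}
D(QQ^*) = \int_\Gamma \|k_Q(x^{-1})\|_{HS}^2 \, d\mu(x),
\end{equation*}
and the unimodularity hypothesis makes the substitution $x \mapsto x^{-1}$ measure-preserving, so $D(QQ^*) = D(Q^*Q)$. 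Faithfulness is automatic from the integral formula: writing a positive $\Gamma$-invariant $P$ as $(P^{1/2})^*P^{1/2}$, one has $D(P) = 0$ iff $k_{P^{1/2}} = 0$ a.e., iff $P^{1/2} = 0$, iff $P = 0$.

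The one genuine obstacle is the converse in the first bullet: producing the convolution kernel $k_Q$ when $Q$ is merely bounded and not globally Hilbert--Schmidt. I would handle this by exhausting $\Gamma$ with relatively compact sets, getting compatible Hilbert--Schmidt kernels for $Q\chi_\Omega$, and using $\Gamma$-invariance to glue these into a single $k_Q$; the square-integrability of $k_Q$ then drops out of the identity in step three.
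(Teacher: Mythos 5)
Your proof is correct and follows essentially the same route as the paper: the identity $\nu_{Q^*Q}(\Omega)=\|Q\chi_\Omega\|_{HS}^2$, the Hilbert--Schmidt/$L^2$-kernel characterisation, the Fubini-plus-invariance computation giving $\mu(\Omega)\int_\Gamma\|k_Q\|_{HS}^2\,d\mu$, and the relation $k_{Q^*}(z)=k_Q(z^{-1})^*$ combined with unimodularity for the trace property. You merely make explicit two points the paper leaves implicit, namely the exhaustion argument producing the convolution kernel from local Hilbert--Schmidt restrictions and the faithfulness of the trace; both are handled correctly.
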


\begin{rem}
  We recall that this last trace property allows to get a meaningful
  notion of dimension for closed $\Gamma$-invariant subspaces $L
  \subset \mathcal{H} = L^2(\Gamma) \otimes H$. Indeed, one sets then
  $\dim_\Gamma L = D(\Pi_L) $. This satisfies the key property
  $\dim_\Gamma f(L) = \dim_\Gamma L$ for any closed densely defined
  invariant injective operator $f: L \rightarrow H$, see e.g. \cite[\S
  2]{Pansu1} or \cite[\S 3.2, \S 6]{Rumin05}.
\end{rem}

\begin{proof}
  We recall that an operator is Hilbert--Schmidt if and only if it
  possesses an $L^2$ kernel, see e.g. \cite[Chap VI]{Reed-Simon}. Then
  by definition
  \begin{align*}
    \nu_P(\Omega) & = \tau_\mathcal{H}(\chi_\Omega Q^*Q
    \chi_\Omega) = \|Q \chi_\Omega\|_{\mathcal{H}S}^2  \\
    & = \int_{\Gamma \times \Omega} \|K_Q(x,y)\|_{HS}^2 d \mu (x)
    d\mu(y) \\
    & = \mu(\Omega) \int_\Gamma \|k_Q(x)\|_{HS}^2 d \mu(x) \quad
    \mathrm{by\ invariance}\,.
  \end{align*}
  Hence $D(P) = \int_\Gamma \|k_Q(x)\|_{HS}^2 d \mu(x)$.
  Moreover, one has
  $$\|k_Q(x)\|_{HS} = \|(k_Q(x))^*\|_{HS} = \|k_{Q^*}(x^{-1})\|_{HS}$$
  giving
  \begin{displaymath}
    D(Q^* Q) = \int_\Gamma\|k_Q(x)\|_{HS}^2 d \mu(x) = \int_\Gamma
    \|k_{Q^*}(y)\|_{HS}^2 d \mu(y) = D(Q Q^*)
  \end{displaymath}
  on unimodular groups since there $\mu(\Omega^{-1}) = \mu(\Omega)$.
\end{proof}

One can finally express the density using Fourier analysis on some
family of groups. Namely, following Dixmier \cite[\S 18.8]{Dixmier},
if the group $\Gamma$ is \emph{locally compact, unimodular and
  post\-li\-mi\-nai\-re}, there exists a Plancherel measure $\mu^*$ on
its unitary dual $\widehat \Gamma$, together with a Plancherel formula
on $L^2(\Gamma)$. In particular on positive operators $P= Q^*Q$, one
has using Proposition~\ref{prop:traceclass}
\begin{equation}
  \label{eq:56}
  D(P) = \|k_Q\|_2^2 = \int_{\widehat \Gamma} 
  \|\widehat{k_Q}(\xi)\|_{HS}^2 d \mu^*(\xi) \,.
\end{equation}
This allows to estimate the spectral density $F(\lambda)$ in some
simple cases as in $\R^n$.

\subsection{Illustration and comparison with known inequalities on
  $\R^n$}
\label{sec:illustr-comp-rn}



For instance, in the case of the Laplacian $\Delta$ on $\R^n$, the
spectral space $E_\lambda(\Delta)$ is the Fourier transform of
functions supported in the ball $B(0, \sqrt\lambda)$ in
$(\widehat{\R^n}, d\mu^*) \simeq (\R^n, (2\pi)^{-n} dx)$, hence
$\widehat{k_{\Pi_\lambda}} = \chi_{B(0, \sqrt\lambda)}$ and
\eqref{eq:56} provides
\begin{equation}
  \label{eq:57}
  F(\lambda) = \mu^*(B(0, \sqrt\lambda)) = C_n \lambda^{n/2} ,
\end{equation}
with $C_n= (2\pi)^{-n} \mathrm{vol}(B_n)$. This leads to
\begin{displaymath}
  G(\lambda) =
  \frac{n C_n}{n-2} \lambda^{n/2 -1}\quad \mathrm{and}\quad H(x) = x
  G^{-1}(x) = \Bigl(\frac{n-2}{n C_n}\Bigr)^{\frac{2}{n-2}} x^{\frac{n}{n-2}} \,,
\end{displaymath}
so that finally \eqref{eq:3} gives the classical Sobolev inequality in
$\R^n$
\begin{displaymath}
  \|f\|_{2n/(n-2)} \leq \frac{1}{\pi} \Bigl(
  \frac{n\,\mathrm{vol}(B_n)}{n-2} \Bigr)^{\frac{1}{n}} \|df\|_2  =
  D_n \|df\|_2\,. 
\end{displaymath}
One finds that the constant $D_n$ has the correct rate of decay in
$n$, namely $D_n \sim_{+\infty} \sqrt{\frac{2 e}{n \pi}}$.  While
according to \cite{Aubin}, the best constant here is $D_n^* =
2(n(n-2))^{-1/2} \mathrm{area}(S_n)^{-1/n}$, and satisfies $D_n^*
\sim_{+\infty} \sqrt{\frac{2}{n \pi e}}$.

We now consider the Moser inequality \eqref{eq:31}. On functions this
gives the classical $L^2$-Moser inequality with constants with the
right decay in $n$ on $\R^n$. Indeed from \eqref{eq:57}, one finds
that
\begin{displaymath}
  \|f \|_{2+4/n}^{2+4/n} \leq 4^{1+2/n} C_n^{2/n}\|f\|_2^{4/n}
  \|df\|_2^2 = E_n \|f\|_2^{4/n}
  \|df\|_2^2 \,,
\end{displaymath}
with $E_n \sim_{+\infty} \frac{2e}{n \pi}$ while, following Beckner,
see \cite{Beckner} or \cite[Appendix]{Coulhon-Grigorian-Levin}, the
best constants in the $L^2$-Moser inequality are asymptotic to
$\frac{2}{n \pi e}$.

\medskip

Still on $\R^n$, one can get some general algebraic expression of
$F(\lambda)$ for positive invariant differential operator $A = \sum_I
a_I \partial_{x_I}$. Let $\sigma(A)(\xi) = \sum_I a_I (i\xi)^I$ be its
polynomial symbol. Then again the spectral space $E_\lambda(A)$
consists in functions whose Fourier transform is supported in
\begin{displaymath}
  D_\lambda= \{\xi \in \R^n \mid \sigma(A)(\xi) \leq \lambda\}
\end{displaymath}
and as above
\begin{displaymath}
  F(\lambda) = (2\pi)^{-n} \mathrm{vol}(D_\lambda). 
\end{displaymath}
The asymptotic behaviour of $F(\lambda)$ when $\lambda \searrow 0$ can
be obtained from the resolution of the singularity of the polynomial
$\sigma(A)$ at $0$. Indeed, there exists $\alpha \in \mathbb{Q}^+$ and
$k \in [0,n-1]\cap \N$ such that
\begin{displaymath}
  F(\lambda) \underset{\lambda \rightarrow 0^+}{\sim} C \lambda^\alpha |\ln
  \lambda|^k  \,,
\end{displaymath}
see e.g. Theorem 7 in \cite[\S21.6]{Arnold}. Moreover, under a
non-degeneracy hypothesis on $\sigma(A)$, the exponents $\alpha$ and
$k$ can be read from its Newton polyhedra. Then if $\alpha > 1$,
Proposition~\ref{prop:3:1} yields that $G(\lambda)
\underset{0}{\asymp} \lambda^{\alpha-1} |\ln \lambda|^k$. Therefore
$G^{-1}(u) \underset{0}{\asymp} u^{1/(\alpha-1)} |\ln
u|^{-k/(\alpha-1)}$ and finally the $H$-Sobolev inequality
\eqref{eq:3} is governed in small energy by the function
\begin{displaymath}
  H(u) \asymp u^{\frac{\alpha}{\alpha-1}}
  |\ln(u)|^{-\frac{k}{\alpha-1}} \quad \mathrm{for }\quad u \ll 1\,. 
\end{displaymath}

\section{Spectral density and cohomology}
\label{sec:spectr-dens-cohom}

We conclude with an application of the Sobolev inequalities in the
pure state setting.  Let $K$ be a finite simplicial complex and
consider a covering $\Gamma \rightarrow X \rightarrow K$. Let $d_k$ be
the coboundary operator on $k$-cochains $X^k$ of $X$. As a purely
combinatorial and local operator, it acts boundedly on all
$\ell^p$-spaces of cochains $\ell^p X^k$, see
e.g. \cite{Bourdon-Pajot,Pansu1}.

Let $F_{\Gamma,k}(\lambda)$ denotes the $\Gamma$-trace of the spectral
projector $\Pi_\lambda = \chi_A(]0, \lambda])$ of $A= d_k^* d_k$. By
Proposition~\ref{prop:density} this function coincides with the
density of $\Pi_\lambda$ relatively to $\Gamma$ and is also
equivalent, up to multiplicative constants, to the ultracontractive
spectral decay $F(\lambda) = \|\Pi_\lambda\|_{1, \infty}$. Thus
Theorem~\ref{thm:1.4} is a direct application of
Theorem~\ref{thm:H-Sobolev} or~\ref{thm:rho-Sobolev} in the polynomial
case. This statement compares two measurements of the torsion of
$\ell^2$-cohomology $ T_2^{k+1} = \overline{ d_k(\ell^2)}^{\ell^2} /
d_k(\ell^2) $ that share some geometric invariance. We describe this
more precisely.

\medskip

We first recall the main invariance property of
$F_{\Gamma,k}(\lambda)$. We say that two increasing functions $f, g:
\R^+ \rightarrow \R^+$ are equivalent if there exists $C \geq 1$ such
that $f(\lambda/C) \leq g(\lambda) \leq f(C \lambda)$ for $\lambda$
small enough. According to
\cite{Efremov,Gromov-Shubin,Gromov-Shubin-erratum} we have:
\begin{thm}
  Let $K$ be a finite simplicial complex and $ \Gamma \rightarrow X
  \rightarrow K $ a covering. Then the equivalence class of
  $F_{\Gamma,k}$ only depends on $\Gamma$ and the homotopy class of
  the $(k+1)$-skeleton of $K$.
\end{thm}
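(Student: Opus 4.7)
The plan is to reduce homotopy invariance of the equivalence class of $F_{\Gamma,k}$ to a general principle about spectral densities of positive self-adjoint $\Gamma$-invariant operators that are related by bounded $\Gamma$-equivariant chain homotopy equivalences. The key observation is that $d_k : \ell^2 X^k \to \ell^2 X^{k+1}$ only involves the $(k+1)$-skeleton, so $F_{\Gamma,k}$ is already determined by the $\ell^2$ cellular complex in degrees $\leq k+1$, which explains why only the $(k+1)$-skeleton matters.

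First I would set up the categorical framework. A homotopy equivalence $\varphi : K^{(k+1)} \to (K')^{(k+1)}$ between the $(k+1)$-skeleta lifts to a $\Gamma$-equivariant map between the corresponding portions of the coverings $X$ and $X'$. Since $K$ and $K'$ are finite, the induced $\Gamma$-equivariant cochain maps $f_* : \ell^2 X^* \to \ell^2 (X')^*$ and $g_*$ in the reverse direction are bounded (with norm controlled by the finitely many cell-images of a fundamental domain), and a standard cellular approximation produces bounded $\Gamma$-equivariant cochain homotopies $h_*, h'_*$ with $\id - gf = d h + h d$ and $\id - fg = d h' + h' d$.

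The heart of the argument is the following dilatational equivalence lemma, in the spirit of Gromov--Shubin: if $A = d_k^* d_k$ on $X$ and $A' = (d_k')^* d_k'$ on $X'$ are joined by such a bounded $\Gamma$-equivariant chain homotopy equivalence, then there exists $C \geq 1$ with $F_{\Gamma,k}(\lambda/C) \leq F_{\Gamma,k}'(\lambda) \leq F_{\Gamma,k}(C\lambda)$ for all small $\lambda$. The idea is to exploit $\tau_\Gamma$-continuity of spectral projectors combined with the fact that, for a $\Gamma$-equivariant bounded operator $T : H \to H'$ between Hilbert $\Gamma$-modules, the $\Gamma$-dimension of the spectral space $\chi_{T^*T}(]0,\lambda])$ is governed by the closure of the image of $T$: specifically one compares the spectral distributions of $d_k$ and $d'_k$ by transporting a cochain $\alpha \in (\ker d_k)^\bot \cap E_\lambda(A)$ to $f_k \alpha$, decomposing this image into its component in $(\ker d'_k)^\bot$ and its kernel part, and using the chain homotopy identity $d'_k f_k = f_{k+1} d_k$ to control $\|d'_k f_k \alpha\|_2 \leq \|f_{k+1}\| \sqrt\lambda \|\alpha\|_2$; the projection $\Pi_V \circ f_k \circ \Pi_{E_\lambda(A)}$ then provides a $\Gamma$-equivariant almost-isometric embedding on large subspaces, whose $\Gamma$-trace yields the required inequality $F'_{\Gamma,k}(C\lambda) \geq F_{\Gamma,k}(\lambda)$ after passing to kernels through the bounded homotopy $h$.

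The main obstacle is precisely this last spectral comparison: one must show that the composition above remains injective modulo a controllable $\Gamma$-dimensional defect, and that this defect is absorbed by a dilatation of $\lambda$ by a constant depending only on the operator norms $\|f_*\|, \|g_*\|, \|h_*\|, \|h'_*\|$. Once this is achieved, symmetry in the roles of $X$ and $X'$ yields the two-sided equivalence. The restriction to the $(k+1)$-skeleton finally comes by noting that $\ell^2 X^k, \ell^2 X^{k+1}$ and the coboundary $d_k$ depend only on simplices of dimension $\leq k+1$, so attaching higher cells or passing to homotopic complexes above dimension $k+1$ leaves both $A$ and the comparison maps in degrees $\leq k+1$ unchanged, concluding the proof.
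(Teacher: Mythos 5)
First, a point of comparison: the paper does not prove this statement at all; it quotes it from Efremov and Gromov--Shubin, recording only, as ``one tool in the proof'', the existence of bounded $\Gamma$-invariant homotopy equivalences between the Hilbert cochain complexes. Your proposal follows exactly the strategy of those references, so the route is the right one. The issue is that the central lemma is announced but not established.

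That is a genuine gap. Everything hinges on the ``dilatational equivalence lemma'': if $(\ell^2X^*,d)$ and $(\ell^2Y^*,d')$ are joined by bounded $\Gamma$-equivariant cochain maps $f,g$ and homotopies $h,h'$, then the two spectral density functions are dilatationally equivalent near $0$. You state it, give the easy half of its proof (the estimate $\|d'_kf_k\alpha\|\le\|f_{k+1}\|\sqrt\lambda\,\|\alpha\|$ from $d'_kf_k=f_{k+1}d_k$), and then declare the remaining ``injectivity modulo a controllable $\Gamma$-dimensional defect'' to be the main obstacle without resolving it. But that step \emph{is} the lemma, and the correct mechanism is not a defect absorbed by dilating $\lambda$: one shows there is no defect for $\lambda$ small. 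Concretely, with $P,P'$ the orthogonal projections onto $(\ker d_k)^\perp,(\ker d'_k)^\perp$ and $\alpha\in E_\lambda(A)\cap(\ker d_k)^\perp$, the identity $g_kf_k=\mathrm{Id}+d_{k-1}h_k+h_{k+1}d_k$ gives $Pg_kf_k\alpha=\alpha+Ph_{k+1}d_k\alpha$, since $\operatorname{im}d_{k-1}\subset\ker d_k$; hence $\|Pg_kf_k\alpha\|\ge(1-\|h_{k+1}\|\sqrt\lambda)\|\alpha\|$. Since $d_kg_k=g_{k+1}d'_k$ forces $g_k(\ker d'_k)\subset\ker d_k$, one has $Pg_kf_k\alpha=Pg_kP'f_k\alpha$, whence $\|P'f_k\alpha\|\ge\|g_k\|^{-1}(1-\|h_{k+1}\|\sqrt\lambda)\|\alpha\|$. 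For $\lambda$ small this makes $P'f_k$ bounded below on $E_\lambda(A)\cap(\ker d_k)^\perp$, so its image is a $\Gamma$-subspace of $(\ker d'_k)^\perp$ of the same $\Gamma$-dimension on which $\|d'_k\beta\|\le C\sqrt\lambda\,\|\beta\|$; the von Neumann min--max principle (which you also do not invoke) then yields $F'_{\Gamma,k}(C^2\lambda)\ge F_{\Gamma,k}(\lambda)$, and symmetry finishes. Without this computation, in particular without the two observations $\operatorname{im}d_{k-1}\subset\ker d_k$ and $g_k(\ker d'_k)\subset\ker d_k$ that render the kernel components harmless, the proposal remains a plan rather than a proof. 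The reduction to the $(k+1)$-skeleton (simplicial approximation, equivariant lifting, boundedness of the induced maps because $K$ is finite) is fine as sketched.
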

One tool in the proof is the observation that an homotopy of finite
simplicial complexes $X$ and $Y$ induces bounded $\Gamma$-invariant
homotopies between the Hilbert complexes $( \ell^2 X^k, d_k)$ and
$(\ell^2 Y^k, d'_k)$. That means there exist $\Gamma$-invariant
bounded maps
\begin{displaymath}
  f_k : \ell^2 X^k \rightarrow \ell^2 Y^k \quad
  \mathrm{and} \quad g_k : \ell^2  Y^k
  \rightarrow \ell^2  X^k
\end{displaymath}
such that
\begin{displaymath}
  f_{k+1} d_k = d'_kf_k \quad \mathrm{and}\quad g_{k+1} d'_k =
  d_k g_k
\end{displaymath}
and
\begin{displaymath}
  g_k f_k = \mathrm{Id} + d_{k-1}h_k + h_{k+1} d_k \quad \mathrm{and}
  \quad
  f_k g_k = \mathrm{Id} + d'_{k-1} h'_k + h'_{k+1} d'_k
\end{displaymath} 
for some bounded maps
\begin{displaymath}
  h_k : \ell^2  X^k \rightarrow \ell^2
  X^{k-1} \quad \mathrm{and} \quad h'_k : \ell^2 
  Y^k \rightarrow 
  \ell^2  Y^{k-1}. 
\end{displaymath}
Actually all these maps are purely combinatorial and local, see
e.g. \cite[\S1]{Bourdon-Pajot}, and thus extend on all $\ell^p$ spaces of
cochains.

One can show a similar invariance property of the inclusion
\eqref{eq:21} we recall below, but that holds more generally on
uniformly \emph{locally finite simplicial complexes}, without
requiring a group invariance. These are simplicial complexes such that
each point lies in a bounded number $N(k)$ of $k$-simplexes.

\begin{prop}
  \label{prop:invariance-torsion}
  Let $X$ and $Y$ be uniformly locally finite simplicial
  complexes. Suppose that they are boundedly homotopic in $\ell^2$ and
  $\ell^p$ norms for some $p \geq 2$. Then one has
  \begin{equation}
    \label{eq:58}
    \overline{d_k (\ell^2 X^k)}^{\ell^2} \subset
    d_k(\ell^p X^k )\,, 
  \end{equation}
  if and only if a similar inclusion holds on $Y$.
\end{prop}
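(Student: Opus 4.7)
My plan is to prove only one direction; the statement is symmetric in $X$ and $Y$, so exchanging the roles of $(f_k, h_k)$ and $(g_k, h'_k)$ handles the other. Assume the inclusion holds on $X$ and pick $\beta \in \overline{d'_k(\ell^2 Y^k)}^{\ell^2}$. A first observation is that $\beta \in \ker d'_{k+1}$, because $d'_{k+1}$ is bounded on $\ell^2$ and vanishes on $d'_k(\ell^2 Y^k)$.

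Next, I would transport $\beta$ to $X$ using $g_{k+1}$. If $\beta = \lim_n d'_k \alpha_n$ in $\ell^2$, then $g_{k+1}\beta = \lim_n g_{k+1} d'_k \alpha_n = \lim_n d_k(g_k \alpha_n)$ by boundedness of $g_{k+1}$ on $\ell^2$ and the intertwining relation, so $g_{k+1}\beta \in \overline{d_k(\ell^2 X^k)}^{\ell^2}$. Applying the $X$-side hypothesis furnishes $\eta \in \ell^p X^k$ with $g_{k+1}\beta = d_k\eta$. Now I pull back to $Y$ via the chain homotopy identity in degree $k+1$: since $d'_{k+1}\beta=0$, one has
\begin{displaymath}
f_{k+1} g_{k+1} \beta = \beta + d'_k h'_{k+1} \beta + h'_{k+2} d'_{k+1}\beta = \beta + d'_k h'_{k+1}\beta .
\end{displaymath}
On the other hand $f_{k+1} g_{k+1}\beta = f_{k+1} d_k \eta = d'_k f_k \eta$, whence
\begin{displaymath}
\beta = d'_k\bigl( f_k\eta - h'_{k+1}\beta \bigr).
\end{displaymath}

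It then remains to see that $\gamma := f_k\eta - h'_{k+1}\beta$ belongs to $\ell^p Y^k$. The term $f_k\eta$ is in $\ell^p$ because $\eta \in \ell^p X^k$ and $f_k$ is bounded in $\ell^p$ by the hypothesis that the homotopy is bounded in both norms. For the term $h'_{k+1}\beta$, I would use the uniform local finiteness of $Y$: endowing the simplex set with counting measure, one has the elementary inclusion $\ell^2 \subset \ell^p$ for $p \geq 2$ with norm $\leq 1$, so $\beta \in \ell^p Y^{k+1}$, and then $h'_{k+1}\beta \in \ell^p Y^k$ by boundedness of $h'_{k+1}$ in $\ell^p$.

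The main subtle point is precisely the last paragraph: one needs both the $\ell^p$-boundedness of the homotopy (which is in the hypothesis) and the passage from an $\ell^2$ cocycle $\beta$ to an $\ell^p$ cocycle, which is what forces the standing assumption of uniform local finiteness. Everything else is a formal manipulation of chain homotopies, and the argument never requires the group action, only the bounded $\ell^2/\ell^p$ homotopy equivalence between the two complexes.
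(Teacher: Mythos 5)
Your argument is correct and follows essentially the same route as the paper: transport the limit cocycle to $X$ via $g_{k+1}$, apply the hypothesis there to get an $\ell^p$ primitive, and pull back with $f_{k+1}g_{k+1}=\mathrm{Id}+d'_kh'_{k+1}+h'_{k+2}d'_{k+1}$, using $\ell^2\subset\ell^p$ to place $h'_{k+1}\beta$ in $\ell^p$. The only (harmless) variation is that you apply the homotopy identity directly to the limit $\beta$ after noting $d'_{k+1}\beta=0$, whereas the paper applies it to the approximating exact cochains and passes to the limit.
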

\begin{proof}
  Suppose that $\overline{d_k (\ell^2 X^k)}^{\ell^2} \subset
  d_k(\ell^p X^k )$ and consider a sequence $\alpha_n = d'_k (\beta_n)
  \in d'_k(\ell^2 Y^k)$ that converges to $\alpha \in \overline{d_k
    (\ell^2 Y^k)}^{\ell^2}$ in $\ell^2$.

  Then $g_{k+1} \alpha_n = d_k(g_k \beta_n) \rightarrow g_{k+1} \alpha
  \in \overline{d_k (\ell^2 X^k)}^{\ell^2}$. Therefore there exists
  $\beta \in \ell^p X^k$ such that $g_{k+1} \alpha = d_k \beta $. Then
  taking $\ell^2$-limit in the sequence
  \begin{displaymath}
    f_{k+1} g_{k+1} \alpha_n = \alpha_n + d'_k h'_{k+1} \alpha_n + h'_{k+2}
    d'_{k+1} \alpha_n  = \alpha_n + d'_k h'_{k+1} \alpha_n
  \end{displaymath}
  gives
  \begin{displaymath}
    d'_k (f_k \beta) = f_{k+1} d_k \beta = \alpha + d'_k
    h'_{k+1}\alpha \,,
  \end{displaymath}
  and finally $\alpha \in d'_k(\ell^p Y^k)$ since $\ell^2 Y^k \subset
  \ell^p Y^k$ for $p \geq 2$.
\end{proof}

The inclusion \eqref{eq:58} we consider here is related to problems
studied in $\ell^{p,q}$ cohomology. We briefly recall this notion and
refer for instance to \cite{Goldshtein-Troyanov} for more details. If
$X$ is a simplicial complex as above, one considers the spaces
\begin{equation*}
  Z_q^k (X) = \ker d_k \cap
  \ell^q X^k \quad \mathrm{and} \quad B_{p,q}^k(X) =
  d_{k-1}(\ell^p X^k) \cap \ell^q X^k \,.
\end{equation*}
Then the $\ell^{p,q}$-cohomology of $X$ is defined by
\begin{displaymath}
  H_{p,q}^k (X) = Z_q^k (X) /
  B_{p,q}^k(X)\,. 
\end{displaymath}
Its reduced part is the Banach space
\begin{displaymath}
  \overline H_{p,q}^k (X) =
  Z_q^k (X) / \overline B_{p,q}^k(X) \,,
\end{displaymath} 
while its torsion part
\begin{displaymath}
  T_{p,q}^k(X) = \overline B_{p,q}^k(X)/
  B_{p,q}^k(X) \,
\end{displaymath}
is not a Banach space. These spaces fit into the exact sequence
\begin{displaymath}
  0 \rightarrow T_{p,q}^k(X) \rightarrow H_{p,q}^k
  (X)  \rightarrow \overline H_{p,q}^k (X)
  \rightarrow 0 \,.
\end{displaymath}
It is straightforward to check as above that, for $p\geq q$, these
spaces satisfy the same homotopical invariance property as in
Proposition~\ref{prop:invariance-torsion}. 

\begin{prop}
  \label{prop:invariance-cohomology}
  Let $X$ and $Y$ be uniformly locally finite simplicial
  complexes. Suppose that they are boundedly homotopic in $\ell^p$ and
  $\ell^q$ norms for $p\geq q$. Then the maps $f_k: \ell^* X^k
  \rightarrow \ell^* Y^k$ and $g_k : \ell^* Y^k \rightarrow \ell^*
  X^k$ induce reciprocal isomorphisms between the $\ell^{p,q}$
  cohomologies of $X$ and $Y$, as well as their reduced and torsion
  components.
\end{prop}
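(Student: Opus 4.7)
The plan is to transplant the standard chain-homotopy-induces-isomorphism argument to the $\ell^{p,q}$-setting, keeping careful track of the two exponents. First, I would verify that the chain maps descend to the $\ell^{p,q}$-complexes. For $\alpha \in Z_q^k(X) = \ker d_k \cap \ell^q X^k$, the $\ell^q$-boundedness of $f_k$ gives $f_k\alpha \in \ell^q Y^k$, and $d'_k f_k \alpha = f_{k+1} d_k \alpha = 0$, so $f_k\bigl(Z_q^k(X)\bigr) \subset Z_q^k(Y)$. If $\alpha = d_{k-1}\beta \in B_{p,q}^k(X)$ with $\beta \in \ell^p X^{k-1}$ and $\alpha \in \ell^q X^k$, then $f_k \alpha = d'_{k-1} f_{k-1}\beta$ with $f_{k-1}\beta \in \ell^p Y^{k-1}$ by $\ell^p$-boundedness, hence $f_k\bigl(B_{p,q}^k(X)\bigr) \subset B_{p,q}^k(Y)$; by continuity in $\ell^q$-norm, $f_k$ also sends $\overline{B_{p,q}^k(X)}$ into $\overline{B_{p,q}^k(Y)}$. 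The symmetric statements hold for $g_k$.

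Next, I would use the chain homotopies to control the compositions $g_k\circ f_k$ and $f_k\circ g_k$. For $\alpha \in Z_q^k(X)$ the identity
\[
g_k f_k \alpha - \alpha = d_{k-1} h_k \alpha + h_{k+1} d_k \alpha = d_{k-1} h_k \alpha
\]
holds, and $h_k\alpha \in \ell^q X^{k-1}$ by $\ell^q$-boundedness of $h_k$. Since the complexes are discrete, $\ell^q \subset \ell^p$ for $p \geq q$, so $h_k\alpha \in \ell^p X^{k-1}$ as well, giving $d_{k-1}h_k\alpha \in d_{k-1}(\ell^p X^{k-1}) \cap \ell^q X^k = B_{p,q}^k(X)$. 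Hence $[g_kf_k \alpha] = [\alpha]$ already in the unreduced $H_{p,q}^k(X)$, and the same reasoning with the primed homotopy yields $[f_kg_k\alpha'] = [\alpha']$ in $H_{p,q}^k(Y)$. This proves that $f_k$ and $g_k$ induce mutually inverse isomorphisms on $H_{p,q}^k$; the corresponding statement on the reduced cohomology $\overline H_{p,q}^k = Z_q^k/\overline{B_{p,q}^k}$ is automatic, as coboundaries map to coboundaries and closures are preserved.

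For the torsion component $T_{p,q}^k = \overline{B_{p,q}^k}/B_{p,q}^k$, I would invoke the naturality of the short exact sequence
\[
0 \to T_{p,q}^k \to H_{p,q}^k \to \overline H_{p,q}^k \to 0 \,.
\]
The previous step produces a commuting ladder between the sequences for $X$ and $Y$ in which the middle and right vertical arrows induced by $f_k$ are isomorphisms with inverse $g_k$; the five lemma then forces the restriction $f_k: T_{p,q}^k(X) \to T_{p,q}^k(Y)$ to be an isomorphism as well. Alternatively, one may check directly that $f_k$ maps $\overline B_{p,q}^k(X)$ onto $\overline B_{p,q}^k(Y)$ modulo $B_{p,q}^k(Y)$ via the same homotopy argument.

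The only point requiring mild vigilance, but which follows from the purely local combinatorial nature of the maps already noted after Proposition~\ref{prop:invariance-torsion}, is that the bounded-homotopy hypothesis indeed supplies operators $f_k,g_k,h_k,h'_k$ which are simultaneously bounded on $\ell^p$ and $\ell^q$ cochains, so that all of the above chain identities hold literally in both norms. Granted this, the proof is a formal diagram chase and presents no substantive obstacle; the only real ingredient beyond routine homotopy invariance is the inclusion $\ell^q \subset \ell^p$ used once to absorb $d_{k-1} h_k \alpha$ into $B_{p,q}^k$.
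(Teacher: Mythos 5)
Your argument is correct and is exactly the fleshed-out version of what the paper intends: the paper only remarks that the claim "is straightforward to check as above," referring to the chain-homotopy argument of Proposition~\ref{prop:invariance-torsion}, and your proof carries out that argument, correctly isolating the single use of $p\geq q$ (the inclusion $\ell^q\subset\ell^p$ needed to absorb $d_{k-1}h_k\alpha$ into $B_{p,q}^k$). Nothing further is needed.
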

In this setting, the vanishing of the $\ell^{p,2}$-torsion
$T_{p,2}^{k+ 1}(X)$ is equivalent to the closeness of
$B^{k+1}_{p,2}(X) = d_k(\ell^p X^k) \cap \ell^2 X^{k+1}$ in $\ell^2
X^{k+1}$, i.e. to the inclusion
\begin{displaymath}
  \overline{d_k(\ell^p X^k) \cap \ell^2 X^{k+1}}^{\ell^2} \subset
  d_k(\ell^p X^k) \cap \ell^2 X^{k+1}  \,.
\end{displaymath}
This implies the weaker inclusion \eqref{eq:58}, but is stronger in
general unless the following holds
\begin{equation}
  \label{eq:59}
  d_k(\ell^p X^k) \cap \ell^2 X^{k+1} \subset \overline{d_k(\ell^2
    X^k)}^{\ell^2} .
\end{equation}
Now by Hodge decomposition in $\ell^2 X^{k+1}$, one has always
\begin{displaymath}
  d_k(\ell^p X^k) \cap \ell^2 X^{k+1} \subset \ker d_{k+1} \cap
  \ell^2 X^{k+1} = \overline{H}_{2}^{k+1}(X) \oplus^\bot \overline{d_k(\ell^2
    X^k)}^{\ell^2} .
\end{displaymath}
Hence \eqref{eq:59} holds if the reduced $\ell^2$-cohomology
$\overline{H}_{2}^{k+1}(X)$ vanishes, proving in that case the
equivalence of \eqref{eq:58} to the vanishing of the
$\ell^{p,2}$-torsion, and even to the identity
\begin{equation}
  \label{eq:60}
  B_{p,2}^{k+1} : = d_k (\ell^p X^k) \cap \ell^2 X^{k+1} = \overline{d_k(\ell^2
    X^k)}^{\ell^2} , 
\end{equation}
which is clearly closed in $\ell^2$.
\begin{cor}
  \label{cor:lp2}
  Let $K$ be a finite simplicial space and $\Gamma \rightarrow X
  \rightarrow K$ a covering. Suppose that the spectral distribution
  $F_{\Gamma,k}$ of $A=d_k^* d_k$ on $(\ker d_k)^\bot$ satisfies
  $F_{\Gamma,k}(\lambda) \leq C \lambda^{\alpha/2}$ for some $\alpha >
  2$. Suppose moreover that the reduced $\ell^2$-cohomology
  $\overline{H}_{2}^{k+1}(X) $ vanishes.

  Then \eqref{eq:60} and the vanishing of the $\ell^{p,2}$-torsion
  $T_{p,2}^{k+1}(X)$ hold for $1/p \leq 1/2 - 1/\alpha$.
\end{cor}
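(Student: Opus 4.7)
The plan is to combine Theorem~\ref{thm:1.4} with the Hodge-theoretic discussion that immediately precedes the corollary. The two hypotheses split the work cleanly: the spectral decay bound on $F_{\Gamma,k}$ controls one inclusion via a Sobolev inequality, while the vanishing of $\overline{H}_2^{k+1}(X)$ controls the reverse inclusion via $\ell^2$-orthogonality.

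First, I would invoke Theorem~\ref{thm:1.4} directly: since $F_{\Gamma,k}(\lambda) \leq C\lambda^{\alpha/2}$ with $\alpha>2$, the Sobolev inequality \eqref{eq:22} holds for $A=d_k^*d_k$ on $(\ker d_k)^\bot \subset \ell^2 X^k$ whenever $1/p \leq 1/2 - 1/\alpha$, and in particular the inclusion \eqref{eq:58} is satisfied, i.e.\ every $\ell^2$-limit of elements of $d_k(\ell^2 X^k)$ can be written as $d_k\beta$ for some $\beta \in \ell^p X^k$.

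Next, I would use Hodge decomposition in $\ell^2 X^{k+1}$, which always gives
\begin{displaymath}
  \ker d_{k+1} \cap \ell^2 X^{k+1} = \overline{H}_2^{k+1}(X) \oplus^\bot \overline{d_k(\ell^2 X^k)}^{\ell^2}\,,
\end{displaymath}
together with the obvious inclusion $d_k(\ell^p X^k)\cap \ell^2 X^{k+1} \subset \ker d_{k+1}\cap \ell^2 X^{k+1}$. Since $\overline{H}_2^{k+1}(X) = 0$ by hypothesis, this yields the reverse inclusion \eqref{eq:59}. Combining \eqref{eq:58} and \eqref{eq:59} gives the identity \eqref{eq:60}: $B_{p,2}^{k+1} = d_k(\ell^p X^k)\cap \ell^2 X^{k+1} = \overline{d_k(\ell^2 X^k)}^{\ell^2}$.

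Finally, I would conclude by observing that $B_{p,2}^{k+1}$ coincides with the $\ell^2$-closed space $\overline{d_k(\ell^2 X^k)}^{\ell^2}$, hence is closed in $\ell^2 X^{k+1}$; this closedness is exactly the vanishing of the $\ell^{p,2}$-torsion $T_{p,2}^{k+1}(X) = \overline{B_{p,2}^{k+1}}^{\ell^2}/B_{p,2}^{k+1}$. There is essentially no obstacle here, as the two nontrivial analytic inputs (the Sobolev inequality and the Hodge decomposition) are each furnished directly by the hypotheses; the only care needed is to verify that the Sobolev inequality of Theorem~\ref{thm:1.4} applies on the orthogonal complement of $\ker d_k$, which is legitimate since $d_k$ vanishes on $\ker d_k$ and so only the complement contributes to $d_k(\ell^2 X^k)$.
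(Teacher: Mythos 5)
Your proposal is correct and follows essentially the same route as the paper: Theorem~\ref{thm:1.4} supplies the inclusion \eqref{eq:21}/\eqref{eq:58}, the Hodge decomposition of $\ker d_{k+1}\cap\ell^2 X^{k+1}$ together with $\overline{H}_2^{k+1}(X)=0$ supplies \eqref{eq:59}, and the two combine to give \eqref{eq:60} and hence the closedness of $B_{p,2}^{k+1}$, i.e.\ the vanishing of $T_{p,2}^{k+1}(X)$. This is precisely the chain of reasoning in the discussion preceding the corollary in Section~\ref{sec:spectr-dens-cohom}.
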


For instance, by \cite{Cheeger-Gromov}, infinite amenable groups have
vanishing reduced $\ell^2$-cohomology in all degrees.

\bibliographystyle{abbrv}


\def\cprime{$'$}

-----------------------------------------------------

\end{document}